\documentclass[a4paper, 11pt, reqno,final]{amsart}
\usepackage{fullpage}
\usepackage[utf8]{inputenc}
\usepackage[english]{babel}
\usepackage{amsmath}
\usepackage{amssymb}
\usepackage{amsthm}
\usepackage{xifthen}
\usepackage{holtpolt}
\usepackage{xspace}
\usepackage[usenames]{xcolor}
\usepackage[titletoc,page]{appendix}
\usepackage[authoryear]{natbib}
\bibliographystyle{apalike}
\usepackage{enumitem}

\newtheorem{thm}{Theorem}[section]
\newtheorem{lemma}[thm]{Lemma}
\newtheorem{cor}[thm]{Corollary}
\newtheorem{prop}[thm]{Proposition}

\theoremstyle{definition}
\newtheorem{example}[thm]{Example}
\newtheorem{defn}[thm]{Definition}
\newtheorem{rem}[thm]{Remark}
\newtheorem{example*}[thm]{Example}
    % Hyperlinks
\usepackage{hyperref}
\hypersetup{
    final,
    colorlinks=true,
    raiselinks=true, % Zeilenumbrüche in links
    linkcolor=[rgb]{.0, .0, .8}, % Interne Links
    citecolor=[rgb]{.0, .0, .8}, % Quellenverweise
    filecolor=black, % Dateilinks
    urlcolor=black, % Adressen im web
}

%~ \usepackage{refcheck}   % label-namen werden angezeigt // Holger
    % Show labels // Dominik
%\usepackage[norefs,nocites]{refcheck} 
%\usepackage{cmtt}
%\usepackage[right]{showlabels}
%\showlabels{bibitem}
%\renewcommand{\showlabelsetlabel}[1]{\raisebox{.5cm}{\kern 1.8cm\hbox{\color{blue}  \normalfont \small \llap{\textmtt{#1}}}}}

\usepackage{mathtools} \mathtoolsset{showonlyrefs}

    % Equation counter
\usepackage{chngcntr}
\counterwithin{equation}{section}

\usepackage[normalem]{ulem}

\newcommand{\inn}{\operatorname{Int}}
\newcommand{\relint}{\operatorname{relint}}
\newcommand{\br}{\bigskip}
\DeclareMathOperator*{\essinf}{ess\:inf}
\DeclareMathOperator*{\argmin}{arg\,min}
\newcommand{\e}[1]{\mathbb{E} \left[#1\right]}
\newcommand{\p}[1]{\mathbb{P} \left(#1\right)}
\newcommand{\labelonce}[1]{\stepcounter{equation}\tag{\theequation}\label{#1}}
%~ \renewenvironment{proof}[Proof 1][]{\noindent{\bfseries \itshape Proof\ifthenelse{\isempty{#1}}{}{ #1}: }}{\nopagebreak[4]\par\qed\par\br}

%Befehle Martin
%Mengen
\newcommand{\R}{\mathbb{R}}
%\newcommand{\C}{\mathbb{C}}

%Buchstaben
\renewcommand{\l}{\lambda}
\renewcommand{\d}{\delta}

\renewcommand{\a}{\alpha}
\renewcommand{\e}{\varepsilon}

\renewcommand{\b}{\beta}

%Klammern
\newcommand{\lv}{\lvert}
\newcommand{\rv}{\rvert}

\newcommand{\lb}{\left(}
\newcommand{\rb}{\right)}
\newcommand{\lr}{\left[}
\newcommand{\rr}{\right]}
%Sonstiges

\renewcommand{\O}{\mathcal{O}}
\newcommand{\supp}{\operatorname{supp}}

\renewcommand{\Re}{\operatorname{Re}}
\renewcommand{\Im}{\operatorname{Im}}

\newcommand{\N}{\mathbb{N}}
\renewcommand{\P}{\mathbb{P}}
\newcommand{\mc}{\mathcal}
\newcommand{\m}{\mathbf m}
\newcommand{\mb}{\mathbf}
\newcommand{\Hess}{\operatorname{Hess}}
\newcommand{\C}{\mc C}

\usepackage{tikz}
\usepackage{pgfplots}
\usetikzlibrary{intersections}
\usetikzlibrary{patterns}
\pgfplotsset{compat=1.13}
\usepgfplotslibrary{fillbetween}

\usepackage{subfig}

% Plots the moment space under the restriction m_3 = #2
\newcommand{\plotrestricted}[2][]{
	\subfloat[$m_3=$ #2#1]{
		\begin{tikzpicture}
		\begin{axis}[
		xmin=0,
		xmax=1,
		ymin=0,
		ymax=1,
		samples=160,
		xtick={0, 1},
		ytick={1},
		x=\textwidth*0.42,
		y=\textwidth*0.42
		]
		\pgfmathsetmacro{\low}{#2}
		\pgfmathsetmacro{\high}{exp(ln(#2)/3)}
		\addplot [black][domain=\low:\high] {sqrt(x*#2)};
		\addplot [black][domain=\low:\high] {(x+1)/2-sqrt(-3*x*x/4+x*(1/2+#2)+1/4-#2)};
		\addplot [gray][domain=0:1] {x};
		\addplot [gray][domain=0:1] {x^2};
		%~ \node at (axis cs:0.25, 0.75) {$m_3=$ #1};
		\end{axis}
		\end{tikzpicture}
	}
}

\begin{document}
    \title{Random Moment Problems under Constraints}
    
        \author{Holger Dette$^*$}
        \address{$^*$Department of Mathematics, Ruhr University Bochum, 44780 Bochum, Germany}
        \email{holger.dette@ruhr-uni-bochum.de}
 
        \author{Dominik Tomecki$^\dag$}
        \address{$^\dag$Department of Mathematics, Ruhr University Bochum, 44780 Bochum, Germany}
        \email{dominik.tomecki@ruhr-uni-bochum.de}
 
        \author{Martin Venker$^\ddag$}
        \address{$^\dag$Department of Mathematics, Ruhr University Bochum, 44780 Bochum, Germany}
        \email{martin.venker@ruhr-uni-bochum.de}

    \keywords{random moments, constraint, universality, CLT, large deviations, Bernstein-Szeg\H{o} class.}
    \subjclass[2010]{60F05, 30E05, 60B20}

\begin{abstract}

We investigate moment sequences of probability measures on $E\subset\R$ under constraints of certain moments being fixed. This corresponds to studying sections of $n$-th moment spaces, i.e.~the spaces of moment sequences of order $n$. By equipping these sections with the uniform or more general probability distributions, we manage to give for large $n$ precise results on the (probabilistic) barycenters of moment space sections and the fluctuations of random moments around these barycenters. The measures associated to the barycenters belong to the Bernstein-Szeg\H{o} class and show strong universal behavior. We prove Gaussian fluctuations and moderate and large deviations principles. Furthermore, we demonstrate how fixing moments by a constraint leads to breaking the connection between random moments and random matrices.

\end{abstract}
    \maketitle

    % Section
\section{Introduction}
Classical moment problems on the real line pose the question whether a given sequence of real numbers is the moment sequence of a positive Borel measure with support in a prescribed set $E\subset\R$ and whether such a measure, if it exists, is unique. Most notable are the Hamburger, Stieltjes and Hausdorff moment problems, which correspond to the sets $E=\R$, $E=\R_+:=[0,\infty)$ and $E$ being a compact interval, respectively. Solutions to these moment problems have been known for a long time.

In the classical moment problems one is thus interested in \textit{all} possible moment sequences. In contrast, the random moment problem asks how a \textit{typical} moment sequence looks like. To make this precise, let us denote by $\mathcal{P}(E)$ the set of all Borel probability measures on a Borel set $E \subset \mathbb{R}$ possessing moments of all order, and by $m_j(\mu) := \int x^j \, d\mu(x)$ the $j$-th moment of a measure $\mu \in \mathcal{P}(E)$. The set
    \begin{align*}
        \mathcal{M}_n(E) := \{(m_1(\mu), \ldots, m_n(\mu)) \mid \mu \in \mathcal{P}(E)\}
    \end{align*}
    is called \emph{$n$-th moment space}. It is a convex set in $\R^n$ with positive Lebesgue measure. Beginning with \cite{karsha1953}, \cite{karlin1966} and \cite{krenud1977}, geometric aspects of $\mathcal M_n(E)$ have been investigated in many works. A probabilistic investigation was initiated by \cite{chakemstu1993}, who equipped $\mc M_n([0, 1])$ with the uniform distribution and studied the behavior of a fixed number of the now random moments as the dimension $n$ converges to infinity. They observed that in high dimension such a random moment sequence concentrates near the moment sequence of the arcsine distribution
    \begin{align}
        \mu^0(dx) := \frac{1}{\pi \sqrt{x(1-x)}}\, dx. \label{arcsine}
    \end{align}
		Thus the moment sequence of the arcsine distribution may be seen as a probabilistic barycenter of the moment space $\mc M_n([0,1])$.
		More precisely, let $(m_1^{(n)},\dots,m_n^{(n)})$ be drawn from the uniform distribution on $\mc M_n([0,1])$ and $l\in\N$ be fixed. Then, as $n\to\infty$,
		\begin{align}
		(m_1^{(n)},\dots,m_l^{(n)})\to (m_1(\mu^0),\dots,m_l(\mu^0))\label{LLNold}
		\end{align}
in probability.
    Moreover, \cite{chakemstu1993} proved that 
\begin{align}
\sqrt n((m_1^{(n)},\dots,m_l^{(n)})- (m_1(\mu^0),\dots,m_l(\mu^0)))\label{CLTold}
\end{align}
converge in distribution to a multivariate normal distribution as $n\to\infty$.
Later, \cite{gamloz2004} showed fluctuations on the scales of moderate and  large deviations principles also for $E=[0,1]$. \cite{detnag2012} studied special distributions on the unbounded moment spaces $\mc M_n(\R_+)$ and $\mc M_{2n+1}(\R)$ and also proved central limit theorems. They found moment sequences of a certain Marchenko-Pastur distribution and Wigner's semicircle distribution, respectively, replacing the one of the arcsine measure. 

However, to speak of a typical moment sequence it would be desirable to have a certain universality in the sense that the limiting moment sequences should not strongly depend on the probability distribution which has been put on the moment space $\mc M_n(E)$. The question of universality is even more prominent in the case of unbounded $E$ as then $\mc M_n(E)$ is unbounded itself and thus can not carry the ``natural'' uniform distribution. Therefore recently \cite{DTV} gave a unifying view on the random moment problem by identifying classes of distributions on $\mc M_n(E)$ in all three cases $E=[a,b],\R_+,\R$ that admit universal behavior: On $\R_+$ and $\R$, the moment sequences in these classes always converge in the large $n$ limit to the moment sequences of the Marchenko-Pastur distributions and those of the semicircle distributions, respectively. For $E=[a,b]$, the arcsine measure was found in \cite{DTV} to be rather a special member of the universal family of Kesten-McKay (or free binomial) distributions than being universal itself.

The occurrence of the three families of Kesten-McKay, Marchenko-Pastur and semicircle distributions is somewhat curious and suggests a connection to random matrix theory, where these distributions appear as limits of empirical spectral distributions for the so-called Jacobi, Laguerre and Wigner ensembles, respectively. They are characterized as equilibrium measures to certain external fields on $\R$. 
 We will illuminate the connection of random moments to equilibrium measures, orthogonal polynomials and random matrix theory in the course of this paper.
    
		From a geometric point of view, moment spaces are interesting convex sets that admit special parametrizations and therefore allow for a detailed analysis. For instance, the moment space $\mc M_n([0,1])$ is a convex body contained in $[0,1]^n$. It is very far from other convex bodies like balls, hypercubes or cross-polytopes regarding the strength of the dependence between coordinates. For example, to obtain Gaussian fluctuations of random points in the three mentioned classical convex bodies, one needs to involve a growing number of coordinates due to the rather mild dependence structure. In striking contrast to that, \eqref{CLTold} with $l=1$ shows that even the first coordinate $m_1^{(n)}$ shows for $n\to\infty$ Gaussian fluctuations under the uniform distribution on $\mc M_n([0,1])$, indicating a very strong dependence between all moments. The analysis of moment spaces generally uses special independent coordinates that unravel the dependence structure of moments.
		
    In the present work we investigate the behavior of random moment sequences when certain moments are fixed by a constraint. This corresponds to the question of a typical moment sequence when some moments are a priori known. From a more geometric perspective, we investigate slices of the moment space $\mc M_n(E)$, thereby providing a better insight in its shape. In particular, constraining moment sequences enables to study different regions in moment spaces, see e.g.~Figure \ref{figure1} below, where we show the moment space $\mathcal{M}_2([0, 1])$ and constrained moment spaces obtained by fixing the third moment $m_3$. Of special interest are existence and structure of (probabilistic) barycenters of these constrained moment spaces as well as volume and more refined probabilistic questions like fluctuation laws of the random constrained moment sequences.

Let us now make things precise. Under a constraint $\mc C$ we understand a finite collection of integer indices $1 \le i_1 < \ldots < i_k, k\in\N$ and corresponding values $c_{i_1}, \ldots, c_{i_k}$ which we denote as $\mc C:=\{m_{i_1}=c_{i_1},\dots,m_{i_k}=c_{i_k}\}$. For instance, the constraint $\mc C=\{m_1=c_1\}$ means fixing the expectation, whereas $ \mc C=\{m_1=c_1,m_2=c_2\}$ also fixes the variance. We allow for $k=0$, corresponding to unconstrained moment spaces.  We now want to examine the moments of probability measures whose $i_j$-th moment is given by $c_{i_j},\,j=1,\dots,k$. 
    
    \begin{defn}[Admissible Constraint]\label{admissible}
        For a constraint $\mc C$ we denote by
        \begin{align*}
            \mathcal{P}^{\mc C}(E) := \{\mu \in \mathcal{P}(E) \mid m_{i_j}(\mu) = c_{i_j} \, \forall \, 1 \le j \le k\}
        \end{align*}
        the space of probability measures on $E$ fulfilling constraint $\mc C$ and  by
        \begin{align*}
            \mathcal{M}^{\mc C}_n(E) := \{(m_1(\mu), \ldots, m_n(\mu)) \mid \mu \in \mathcal{P}^{\mc C}(E)\}
        \end{align*}
        the constrained $n$-th moment space. A constraint $\mc C$ is called \emph{admissible} for $E$, if the intersection $\mathcal{M}^{\mc C}_{i_k}(E) \cap \inn \mathcal{M}_{i_k}(E)$ is nonempty, where here and lateron, $\text{Int}$ denotes the interior.
    \end{defn}
    
    For ease of notation,  we will assume throughout the article that $\mc C$ is an admissible constraint for $E$ with indices $i_1, \ldots, i_k$ and corresponding values $c_{i_1}, \ldots, c_{i_k}$. Note that the notion of admissibility depends on the set $E$.
    
    Let us illustrate how a constraint restricts the moment space and allows to study different regions of the space. Figure~\ref{figure1} shows $\mc M_2^{\mc C}([0,1])$ for $\mc C=\{m_3=c_3\}$ (encircled in black) inside of $\mc M_2([0,1])$ (encircled in grey) for two values of $c_3$. In the first plot $c_3=0.3125$ which is the third moment of the arcsine measure. The second plot is for $c_3=0.1$.
    
        \begin{figure}[ht]
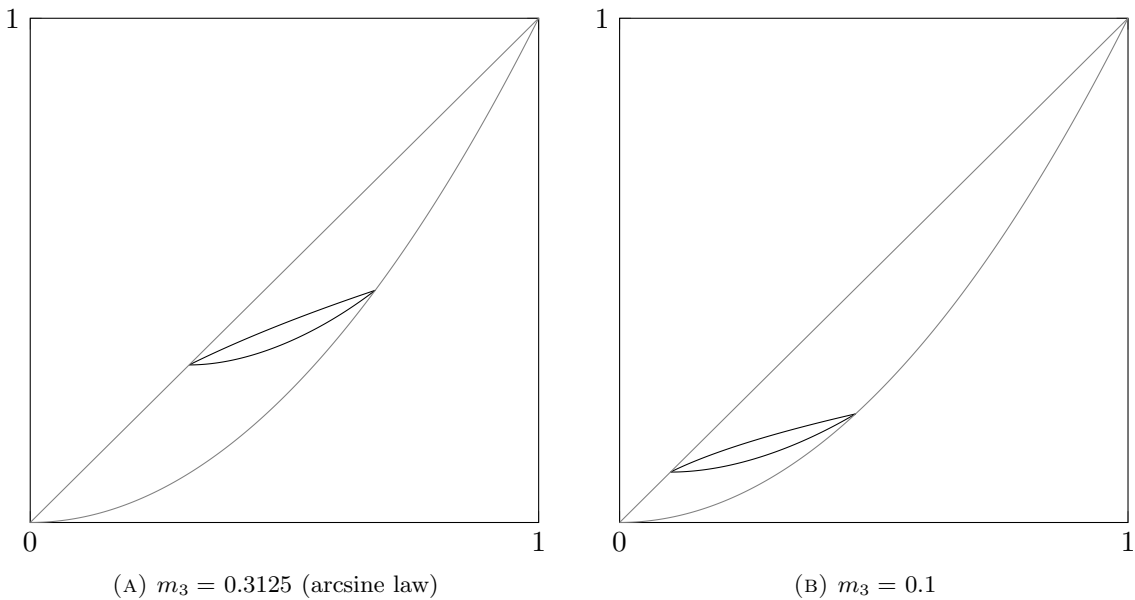

    	\centering
    	 	
    	\plotrestricted[ (arcsine law)]{0.3125}
     	\plotrestricted{0.1}
    	
    	\caption{Visualizations of $\mc M_2^{\mc C}([0,1])$ under constraints on $m_3$.}\label{figure1}
    \end{figure}

    This article is structured as follows. In the next section, we will first consider the uniform distribution on $\mc M_n^{\mc C}(E)$ for a bounded interval $E$. We will see that a uniformly distributed random moment sequence converges for $n\to\infty$ in the sense of \eqref{LLNold}, where the limiting measure has a density w.r.t.~the arcsine measure \eqref{arcsine}. However, it is in general not the equilibrium measure under the constraint $\mc C$ to the uniform external field as might be expected from the discussion above. While an equilibrium measure under a constraint $\mc C$ should be obtained by minimizing Voiculescu's free entropy (from free probability theory, see e.g.~\cite[Chapter 22 and references therein]{Handbook}) to the arcsine measure over the set of measures compatible with the constraint, we rather find the Kullback-Leibler divergence or relative entropy from classical probability playing a major role. This parting of the ways of the random moment problems and random matrix theory will be explained in detail in Section \ref{sec_universality} and has its roots in the fact that introducing a constraint breaks the asymptotic equivalence of the spectral measure encountered in the random moment problem and the empirical spectral measure encountered in random matrix theory. Nevertheless, the limiting measures we find in this paper belong to a famous class of measures as well, the so-called Bernstein-Szeg\H{o} class. Section \ref{sec_uniform} also provides a computation of the volume of the constrained moment spaces, a central limit theorem as well as moderate and large deviations principles. Section~\ref{sec_general_dist} takes a broader approach and defines more general classes of distributions on the constrained moment spaces, in particular on the unbounded moment spaces $\mc M_n^{\mc C}(\R_+)$ and $\mc M_n^{\mc C}(\R)$. For generic densities on these spaces, we identify the universal structures of the limiting moment sequences and give results on the fluctuations around these limits on several scales. We provide a detailed and extensive analysis, even in those cases where the random moment sequences do not have a single limit but rather concentrate around a finite set of limit points. Sections \ref{sec_limit}, \ref{sec_LDP}  and \ref{sec_CLT} are devoted to the proofs of the results in Section \ref{sec_general_dist}, whereas the results in Section \ref{sec_uniform} are proved in Section \ref{sec_application_uniform}.

    % Section
\section{Uniformly distributed random moment sequences}\label{sec_uniform}
 In this section we study the case of random moment sequences uniformly distributed in $\mc M_n^{\mc C}(E)$ for $E$ being a compact interval. Without loss of generality we will choose $E=[0,1]$, since by the linear transformation $t \mapsto a+t(b-a)$ the results for $[0,1]$ can be transferred to any interval $[a,b]$. Note that although Section \ref{sec_general_dist} covers more general distributions on the moment spaces, the results in the present section are more explicit and not easily deduced from the ones of Section \ref{sec_general_dist}.

  Recall our convention that $\mc C$ is an admissible constraint (see Definition \ref{admissible}) of the form $m_{i_1}=c_{i_1},\dots,m_{i_k}=c_{i_k}$. The constrained moment space $\mathcal{M}_n^{\mc C}([0, 1])$ can be identified canonically with the set of the $(n-k)$-dimensional vectors of the unconstrained moments $(m_j,1\leq j\leq n,j\not=i_1,\dots,i_k)$. The set of unconstrained moments is a convex and compact subset of the $(n - k)$-dimensional unit cube and has due to admissibility of $\mc C$ non-zero Lebesgue measure. 
	Pushing forward the $n-k$-dimensional Lebesgue measure from the unconstrained moments to $\mathcal{M}_n^{\mc C}([0, 1])$, we can equip $\mathcal{M}_n^{\mc C}([0, 1])$ with the uniform distribution, which allows us to investigate the behaviour of a ``typical'' moment sequence on the constrained moment spaces.\\
	
	Throughout this section let	$(m_1^{(n)},\dots,m_n^{(n)})$ be drawn from the uniform distribution on $\mathcal{M}_n^{\mc C}([0, 1])$. Here and lateron, we will tacitly assume that all random variables are defined on a common probability space such that we can speak of almost sure convergence. \\
	
	Our first result is a law of large numbers that identifies the limiting moment sequence to which the random moment sequence converges.
    
    \begin{thm}\label{uniform_lln}
        For any $l\in\N$ we have as $n\to\infty$
        \begin{align*}
            (m_1^{(n)}, \ldots, m_l^{(n)}) \to \big(m_1(\mu^{\mc C}), \ldots, m_l(\mu^{\mc C})\big)\quad \text{ a.s.,}
        \end{align*}
        where $\mu^{\mc C}$ is a probability measure on $[0,1]$ of the form
        \begin{align}
            \mu^{\mc C}(dx)=\frac{1}{S_{i_k}(x) \sqrt{x(1-x)}}dx.\label{representation_mu^C}
        \end{align}
        Here $S_{i_k}$ is a polynomial of degree at most $i_k$ that is strictly positive on the interval $[0,1]$. Furthermore, $\mu^{\mc C}$ is the unique probability measure that minimizes the Kullback-Leibler divergence
        \begin{align}
            \mathcal{K}(\mu^0 | \mu) := 
                \begin{cases}
                    \int \log \frac{d\mu^0}{d\mu} \, d\mu^0 &, \mu^0 \ll \mu \\
                    \infty &, \text{else}
                \end{cases} \label{kullback}
        \end{align}
        among all probability measures $\mu\in\mathcal{P}^{\mc C}([0, 1])$, where $\mu^0$ is the arcsine distribution defined in \eqref{arcsine}.
    \end{thm}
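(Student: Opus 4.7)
The plan is to combine the canonical moment parametrization of $\mc M_n([0,1])$ with a Lagrange multiplier characterization of $\mu^{\mc C}$. Recall that every interior point of $\mc M_n([0,1])$ corresponds to a unique sequence of canonical moments $(p_1,\dots,p_n)\in(0,1)^n$ via a triangular map $m_j=m_j(p_1,\dots,p_j)$ that is affine in its last argument; the arcsine measure $\mu^0$ is characterized by $p_k=1/2$ for all $k$, and Skibinsky's classical formula gives the Jacobian of this change of variables as a product of factors of the form $[p_k(1-p_k)]^{n-k}$. Consequently the uniform distribution on $\mc M_n([0,1])$ corresponds to independent Beta-type canonical moments concentrating at $1/2$. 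Since $m_{i_j}$ is affine in $p_{i_j}$ once $p_1,\dots,p_{i_j-1}$ are fixed, each constraint uniquely determines $p_{i_j}$; the natural free parameters for $\mc M_n^{\mc C}([0,1])$ are therefore $(p_j)_{j\in J}$ with $J=\{1,\dots,n\}\setminus\{i_1,\dots,i_k\}$, and the uniform distribution on the constrained moment space pushes forward to an explicit density on $(0,1)^{|J|}$.

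The convergence statement then follows from two concentration facts. For $j>i_k$, a Borel-Cantelli argument based on Beta tails gives $p_j^{(n)}\to 1/2$ almost surely. For the finite-dimensional block $(p_j^{(n)})_{j\in J,\,j\leq i_k}$, a Laplace-type analysis of the explicit induced density shows concentration at a unique deterministic point $(p_1^\ast,\dots,p_{i_k}^\ast)\in(0,1)^{i_k}$ (including the constrained coordinates) at which the Jacobian weight is maximal subject to the moment constraints. Continuity of $(p_1,\dots,p_l)\mapsto(m_1,\dots,m_l)$ transfers this to a.s.\ convergence of the first $l$ random moments to those of the measure $\mu^{\mc C}$ whose canonical moments are $(p_1^\ast,\dots,p_{i_k}^\ast,\tfrac12,\tfrac12,\dots)$. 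By the classical correspondence between canonical moments eventually equal to $1/2$ and Bernstein-Szeg\H{o} measures on $[0,1]$, such a $\mu^{\mc C}$ has density $1/(S_{i_k}(x)\sqrt{x(1-x)})$ for a strictly positive polynomial $S_{i_k}$ of degree at most $i_k$, establishing \eqref{representation_mu^C}.

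To identify $\mu^{\mc C}$ with the Kullback-Leibler minimizer I would apply Lagrange multipliers. Since $\mu\mapsto\mc K(\mu^0|\mu)$ is strictly convex on $\mc P^{\mc C}([0,1])$ and admissibility of $\mc C$ guarantees that a finite value is attained, a unique minimizer $\mu_\ast$ exists. Setting the variational derivative of
\begin{align*}
\mc K(\mu^0|\mu)+\lambda_0\Big(\int d\mu-1\Big)+\sum_{j=1}^k \lambda_{i_j}\Big(\int x^{i_j}\,d\mu-c_{i_j}\Big)
\end{align*}
to zero yields $d\mu^0/d\mu_\ast=\lambda_0+\sum_j\lambda_{i_j}x^{i_j}$, so $\mu_\ast$ has density $\frac{1}{\pi\sqrt{x(1-x)}\,P(x)}$ for a polynomial $P$ of degree at most $i_k$, positive on $[0,1]$. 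Since $\mu^{\mc C}$ has the same structural form and satisfies the same moment constraints, strict convexity forces $\mu^{\mc C}=\mu_\ast$. The step I expect to be the main obstacle is the concentration of the constrained block $(p_j^{(n)})_{j\leq i_k}$: in contrast to the asymptotically independent tail, these coordinates interact through the constraints, and one has to verify via a careful Laplace analysis that the concentrating Jacobian together with the constraint set singles out a unique interior maximum, matching exactly the canonical moments of the Bernstein-Szeg\H{o} / KL-minimizing measure.
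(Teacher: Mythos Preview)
Your overall architecture (canonical moments, Laplace concentration for the first $i_k$ coordinates, Beta tails for the rest, Bernstein--Szeg\H{o} identification) matches the paper's. The genuine gap is the last step, where you try to show that the Laplace limit $\mu^{\mc C}$ coincides with the KL-minimizer $\mu_\ast$ via Lagrange multipliers. Your variational computation gives $d\mu^0/d\mu_\ast=\lambda_0+\sum_j\lambda_{i_j}x^{i_j}$, so the denominator polynomial of $\mu_\ast$ lies in $\operatorname{span}\{1,x^{i_1},\dots,x^{i_k}\}$. By contrast, the Bernstein--Szeg\H{o} form you obtain for $\mu^{\mc C}$ only says $S_{i_k}$ is \emph{some} polynomial of degree $\le i_k$. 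For constraints with gaps in the index set (e.g.\ $\mc C=\{m_2=c_2\}$) these are different structural classes, and ``same structural form plus strict convexity'' does not force equality: strict convexity gives a unique minimizer, not that every measure of Bernstein--Szeg\H{o} type satisfying $\mc C$ is that minimizer. You have not shown that $\mu^{\mc C}$ satisfies the KKT condition, nor that it minimizes $\mc K(\mu^0\mid\cdot)$.

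The paper closes this gap (and simultaneously handles your ``main obstacle,'' uniqueness of the Laplace maximum) with one ingredient you are missing: the Szeg\H{o}-type sum rule
\[
\mc K(\mu^0\mid\mu)=-\sum_{j\ge 1}\log\bigl(4p_j(1-p_j)\bigr),
\]
valid for $\mu$ on $[0,1]$ with canonical moments $(p_j)$. This identity shows directly that minimizing $\mc K(\mu^0\mid\cdot)$ over $\mc P^{\mc C}([0,1])$ is equivalent to setting $p_j=\tfrac12$ for $j>i_k$ and maximizing $\prod_{j\le i_k}p_j(1-p_j)=m_{i_k+1}^+-m_{i_k+1}^-$ under the constraint, which is exactly the Laplace optimization arising from Skibinsky's Jacobian. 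It also lets the paper prove (their Lemma~\ref{concave}) that $\m_{i_k}\mapsto\log(m_{i_k+1}^+-m_{i_k+1}^-)$ is strictly concave on $\inn\mc M_{i_k}([0,1])$, yielding uniqueness of the interior maximizer without any multivariate analysis of the dependent $p$-block. Your Lagrange route can be completed, but only after you establish this sum rule (or otherwise prove that the Jacobian maximizer satisfies KKT), at which point the Lagrange step becomes redundant.
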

    \begin{rem}\strut\label{rem_not_eq_measure}
		\begin{enumerate}
		\item The measure $\mu^\C$ belongs to the so-called Bernstein-Szeg\H{o} class on $[0,1]$ which consists of measures of the form
            \begin{align*}
                \mu(dx)=\frac{(x(1-x))^{\pm \frac12}}{S(x)}dx,
            \end{align*}
            where $S$ is a polynomial strictly positive on the interval $[0,1]$. They play a key role in the theory of orthogonal polynomials and possess many useful properties, e.g.~explicit formulae for their orthogonal polynomials. For details and references we refer to \cite[\S 2.6]{Szego}. The connection between moments and orthogonal polynomials is well-known and at the roots of both theories. Theorem~\ref{uniform_lln} shows that these important measures of orthogonal polynomials theory are also central for moment spaces in the sense that their moments provide the probabilistic barycenters of sections of the moment space. More general members of the Bernstein-Szeg\H o class will be found in Section~\ref{sec_general_dist} when discussing random moment sequences over the unbounded spaces $\R_+$ and $\R$.
			\item For the unconstrained random moment problem, the probabilistic barycenter is given by the moment sequence of the arcsine measure $\mu^0$. It is the equilibrium measure on $[0,1]$ to the external field $Q_{ex}=0$, where we recall that the equilibrium measure to an external field $Q_{ex}:E\to\R$ is the unique Borel probability measure $\mu$ on $E$ such that
\begin{align}
\int_E \int_E \log\lv t-s\rv^{-1}d\mu(t)d\mu(s)+\int_E Q_{ex}(t)d\mu(t)\label{eq_minimization}
\end{align}
is minimal, see e.g.~\cite{SaffTotik}. The Marchenko-Pastur distributions are (if there is no atom at 0) the equilibrium measures on $E=\R_+$ to $Q_{ex}(t)=\frac{t}{z_2}-\frac{z_1-z_2}{z_2}\log t$ for some constants $0<z_2\leq z_1$. Likewise, the semicircle distributions are obtained as equilibrium measures on $E=\R$ to external fields that are quadratic polynomials with positive leading coefficient. One might thus expect $\mu^\C$ from Theorem \ref{uniform_lln} to be the minimizer of a constrained equilibrium problem. However, we can deduce from representation \eqref{representation_mu^C} that for constraints $\mc C$ with $\mu^0\notin\mathcal{P}^{\mc C}([0, 1])$, $\mu^{\mc C}$ is not the solution of the constrained equilibrium problem
			\begin{align}
\inf_{\mu\in\mathcal{P}^{\mc C}([0, 1])}\int_{[0,1]} \int_{[0,1]} \log\lv t-s\rv^{-1}d\mu(t)d\mu(s).\label{eq_minimization_constrained}
\end{align}
As variational calculus shows, a solution $\mu\in\mathcal{P}^{\mc C}([0, 1])$ of \eqref{eq_minimization_constrained} has to fulfill the Euler-Lagrange equations
\begin{align}
2 \int\log\lv t-s\rv^{-1}d\mu(s)+\sum_{j=1}^k\l_j t^{i_j}\begin{cases}	
																											=c,\quad t\in\supp(\mu),\\
																											\geq c,\quad t\notin\supp(\mu),
																											\end{cases}
\end{align} 
for some $c$ and Lagrange multipliers $\l_1,\dots,\l_k$. Thus the moment constraint $\mc C$ leads to the appearance of polynomial external fields. Equilibrium measures to such fields are well-understood, see e.g.~\cite[Theorem 1.38]{Deiftetal99}.  The support of the equilibrium measure $\mu$ consists of finitely many intervals in $[0,1]$ with non-empty interior and it has a density of the form
\begin{align*}
\mu(dx)=T(x)\prod_{a\in \textup{HE}}\frac{1}{\sqrt{\lv x-a\rv}}\prod_{b\in\textup{SE}}\sqrt{\lv x-b\rv}1_{\supp(\mu)}(x)dx.
\end{align*}
Here $\text{HE}\subset\{0,1\}$ is the (possibly empty) set of the so-called hard edges and SE the set of so-called soft edges, and $T$ is a polynomial which is strictly positive on $\supp(\mu)$.  As an example, the minimizer of~\eqref{eq_minimization_constrained} under the constraint $\mc C=\{m_1=c_1,m_2=c_2\}$ is for $c_1\in(0,1)$ and $c_2>c_1^2$ small enough the semicircle distribution $\mu(dx)=c \sqrt{4\beta-(x-c_1)^2}1_{[a,b]}(x)dx$ for some $a,b,\beta,c>0$. This is very different from the measure $\mu^\C$, which we will give in~\eqref{measure_fixed_mean_and_variance} below. This implies that already the supports of the equilibrium measure under a constraint and of the measure $\mu^{\mc C}$ can be very different, as the former can be arbitrarily small while the latter is always $[0,1]$. We will explain this phenomenon in Section \ref{sec_universality}.
		\item  The Kullback-Leibler divergence $\mathcal{K}(\mu^0 | \mu)$ is also called relative entropy. It is always non-negative and can be understood as a distance measure for probability distributions. Indeed, although not being a metric itself, one has Pinsker's inequality $\|\mu-\nu\|_{TV}\leq\sqrt{\mc K(\nu|\mu)/2}$ (see \cite{Csiszar}) and thus convergence of the Kullback-Leibler divergence to 0 implies convergence in the total variation distance $\|\cdot\|_{TV}$. It also appears as the rate function of a large deviations principle in Sanov's theorem. Note however, that in typical encounters of $\mc K$, the minimization is in the first argument. The connection of $\mc K$ to random moment problems was first observed in \cite{gamloz2004}. There the authors call it \textit{reversed} Kullback-Leibler divergence because of the minimization in the second argument.
		\end{enumerate}
		\end{rem}
    
\begin{example}\label{example_uniform}
The proof of Theorem \ref{uniform_lln} is constructive in the sense that the polynomial $S_{i_k}$ can be computed from the constraint $\mc C$. Here we provide some examples, for the full details we refer to Section \ref{sec_limit}. For $\C_1:=\{m_1=c_1\}$ (fixed mean) with $c_1\in(0,1)$, the limiting measure is
    \begin{align*}
       \mu^{\C_1}(dx)= \frac{c_1(1 - c_1)}{\pi \sqrt{x(1-x)}((1 - 2c_1)x + c_1^2)}dx.
    \end{align*}
For a constraint $\C_2:=\{m_1=c_1,m_2=c_2\}$ (fixed mean and fixed variance) the admissibility condition reads $c_1^2 < c_2 < c_1$ and the limiting measure is
\begin{align}
\mu^{\C_2}(dx)=\frac{c_1(1 - c_1)(c_2 - c_1^2)(c_1 - c_2)}{\pi\sqrt{x(1-x)}\big((c_1 - c_2)^2(x - c_1)^2 + (c_2 - c_1^2)^2 x(1 - x)\big)}dx.\label{measure_fixed_mean_and_variance}
\end{align}
Finally, if only the second moment is fixed, i.e.~$\C_3:=\{m_2=c_2\}$, then $\mu^{\C_3}$ is given by \eqref{measure_fixed_mean_and_variance} with $c_1$ being the unique maximizer of the function 
\begin{align*}
c_1\mapsto \frac{(c_2 - c_1^2)(c_1 - c_2)}{c_1(1 - c_1)}
\end{align*}
on $[c_2,\sqrt{c_2}]$.
\end{example}

The limiting measure $\mu^{\mc C}$ allows for an effective description of the volume of $\mc M_n^{\mc C}([0,1])$. \cite[Theorem IV.6.2]{karlin1966} gave an expression of the volume of the unconstrained space $\mc M_n([0, 1])$  in terms of gamma functions, which can by a direct application of Stirling's formula be written as
\begin{align}
{\textup{vol}_n(\mathcal{M}_n([0, 1]))}=\prod_{m=1}^{n-1}\frac{\Gamma(m)^2}{\Gamma(2m)}=2^{-n^2} \Big(\frac{\pi e}{n}\Big)^{n/2} n^{-1/8} (1+\O(1)),	\label{volume_momspace}
\end{align}
vol$_n$ denoting the $n$-dimensional Lebesgue measure. We also need to introduce some notation. Let $m_1,\dots,m_l\in\mc M_l([0,1])$. Then the possible range of $m_{l+1}$ such that $(m_1,\dots,m_{l+1})\in\mc M_{l+1}([0,1])$ holds, is an interval, say $[m_{l+1}^-,m_{l+1}^+]$. In the next theorem, $m_{i_k+1}^\pm(\mu^{\mc C})$ will denote the numbers $m_{i_k+1}^\pm$ to the first $i_k$ moments of the measure $\mu^{\mc C}$ introduced in Theorem \ref{uniform_lln}.
    \begin{thm}\label{volume}
    	 As $n\to\infty$
        \begin{align*}
            \frac{\textup{vol}_{n - k}(\mathcal{M}_n^{\mc C}([0, 1]))}{\textup{vol}_n(\mathcal{M}_n([0, 1]))} &= \frac{\big((m_{i_k + 1}^+ - m_{i_k + 1}^-)(\mu^{\mc C})\big)^{n - i_k} n^{k/2} (2\pi)^{k/2}/\sqrt{d^{\mc C}}}{4^{-i_k(i_k-3)/2} 4^{-i_k n} (\sqrt{\pi/4})^{i_k}}(1 + o(1)),\\
            &=\begin{cases}
            \O\lb e^{-cn}\rb\quad \text{for some }c>0&,\quad\text{ if }\mu^0\notin \mc P^{\mc C}(E),\\
            \O\lb n^{k/2}\rb\quad&,\quad\text{ if }\mu^0\in \mc P^{\mc C}(E),
            \end{cases}
        \end{align*}
        where $d^{\mc C}$ is a constant independent of $n$.

    \end{thm}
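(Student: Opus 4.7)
The plan is to reduce the volume ratio to a pointwise density asymptotic and then evaluate it by Laplace's method in canonical moment coordinates. A disintegration of the uniform law on $\mc M_n([0,1])$ along the map $(m_1,\dots,m_n)\mapsto(m_{i_1},\dots,m_{i_k})$ shows that the ratio in question is precisely the Lebesgue density, at $(c_{i_1},\dots,c_{i_k})$, of $(m_{i_1}^{(n)},\dots,m_{i_k}^{(n)})$ drawn uniformly from the unconstrained $\mc M_n([0,1])$. Hence the theorem is equivalent to a sharp pointwise density asymptotic.

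I would then pass to canonical moment coordinates $(p_1,\dots,p_n)\in(0,1)^n$, under which the uniform distribution on $\mc M_n([0,1])$ factorizes into independent $\mathrm{Beta}(n-l+1,n-l+1)$-marginals $p_l$ (Chang--Kemperman--Studden), and the moment map is upper triangular with $\partial m_j/\partial p_j=\prod_{l=1}^{j-1}p_l q_l$, where $q_l:=1-p_l$. Since $m_{i_j}$ depends only on $p_1,\dots,p_{i_j}$, a change of variables via the coarea formula expresses the density as an integral of a product of Beta densities, times a smooth Jacobian, over the $(i_k-k)$-dimensional submanifold $\Sigma^{\mc C}\subset(0,1)^{i_k}$ cut out by $m_{i_j}(p_1,\dots,p_{i_j})=c_{i_j}$. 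After Stirling-normalizing the Beta prefactors, the leading $n$-dependence of the integrand becomes $e^{(n-i_k)\varphi(p)}$ with
\[
\varphi(p):=\sum_{l=1}^{i_k}\log(p_l q_l),\qquad e^{\varphi(p)}=(m_{i_k+1}^+-m_{i_k+1}^-)\ \text{at the point}\ p.
\]

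Laplace's method on $\Sigma^{\mc C}$ reduces the integral to the value at a maximizer $p^{\ast}$ of $\varphi$ on $\Sigma^{\mc C}$ together with a Gaussian quadratic expansion. The key identification is that $p^{\ast}$ coincides with the first $i_k$ canonical moments of $\mu^{\mc C}$: maximizing $(m_{i_k+1}^+-m_{i_k+1}^-)$ over $\mc P^{\mc C}([0,1])$ turns out to be the geometric incarnation of minimizing the Kullback-Leibler divergence $\mc K(\mu^0\mid\cdot)$ from Theorem \ref{uniform_lln}, as a direct Lagrange computation confirms (Example \ref{example_uniform} with $\mc C=\{m_2=c_2\}$ provides an explicit check: both principles single out the same $c_1^{\ast}$). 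The expansion produces the Gaussian factor $(2\pi/n)^{(i_k-k)/2}/\sqrt{d^{\mc C}}$, with $d^{\mc C}$ the determinant of the restricted Hessian of $-\varphi$ on $T_{p^{\ast}}\Sigma^{\mc C}$. Combining with the Stirling form of $\textup{vol}_n(\mc M_n([0,1]))$ in \eqref{volume_momspace} yields the first asymptotic.

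The dichotomy is then immediate. If $\mu^0\in\mc P^{\mc C}([0,1])$ then $\mu^{\mc C}=\mu^0$ has all canonical moments equal to $\tfrac12$, so $(m_{i_k+1}^+-m_{i_k+1}^-)(\mu^{\mc C})=4^{-i_k}$; its $(n-i_k)$-th power cancels $4^{-i_k n}$ up to the constant $4^{i_k^2}$, leaving only the polynomial order $n^{k/2}$. Otherwise $(m_{i_k+1}^+-m_{i_k+1}^-)(\mu^{\mc C})<4^{-i_k}$ strictly, since $p_l=\tfrac12$ is the unique global maximizer of $p_l q_l$, yielding exponential decay with rate $\log[4^{i_k}(m_{i_k+1}^+-m_{i_k+1}^-)(\mu^{\mc C})]<0$. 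The main obstacle I anticipate is the Laplace analysis on the implicitly-defined manifold $\Sigma^{\mc C}$: establishing uniqueness and non-degeneracy of the constrained maximizer (so that $d^{\mc C}>0$) and controlling the Stirling error uniformly in $p$ up to the boundary of $(0,1)^{i_k}$; the remainder is bookkeeping of powers of $4$ and $\Gamma$-function constants.
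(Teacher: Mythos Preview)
Your approach is correct and parallels the paper's, with one structural difference worth noting. You propose to work entirely in canonical coordinates $(p_1,\dots,p_{i_k})$ and apply Laplace's method on the implicitly defined submanifold $\Sigma^{\mc C}\subset(0,1)^{i_k}$, which forces you through the coarea formula and a manifold version of Laplace. The paper instead uses a \emph{mixed} parametrization of the constrained space (Lemma~\ref{coordinates_uniform}): the first $i_k-k$ coordinates are the \emph{unconstrained ordinary} moments $\mathbf{m}_{i_k}^{\mc C}$, and the remaining are the canonical moments $p_{i_k+1},\dots,p_n$. This yields a flat product domain $\inn\tilde{\mathcal{M}}_{i_k}^{\mc C}\times(0,1)^{n-i_k}$ with an explicit Jacobian, so the volume factors directly as
\[
\int_{\tilde{\mathcal{M}}_{i_k}^{\mc C}}(m_{i_k+1}^+-m_{i_k+1}^-)^{\,n-i_k}\,d\mathbf{m}_{i_k}^{\mc C}\ \cdot\ \prod_{j=i_k+1}^{n}B(n-j+1,n-j+1),
\]
and Laplace's method runs on a flat open subset of $\R^{i_k-k}$ with no coarea step. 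The obstacle you anticipate---uniqueness and non-degeneracy of the constrained maximizer, so that $d^{\mc C}>0$---is exactly what Lemma~\ref{concave} handles: it shows that $\m_{i_k}\mapsto\log(m_{i_k+1}^+-m_{i_k+1}^-)$ is strictly concave on $\inn\mathcal{M}_{i_k}([0,1])$ with everywhere negative-definite Hessian (the proof goes through the sum rule $\mc K(\mu^0\mid\mu)=-\sum_j\log(4p_jq_j)$). Restricted to the affine slice $\mathcal{M}_{i_k}^{\mc C}$, this gives a unique non-degenerate maximizer at once, avoiding the Lagrange-multiplier computation you outline. Your identification of the maximizer with the moments of $\mu^{\mc C}$ and the dichotomy argument are exactly as in the paper.
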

		
		Theorem \ref{volume} can be interpreted as follows: If the constraint $\mc C$ is chosen such that the arcsine distribution does not lie in $\mathcal{P}^{\mc C}([0, 1])$, then the relative volume of the constrained moment space goes to zero exponentially fast. If the arcsine distribution lies in $\mathcal{P}^{\mc C}([0, 1])$, then the relative volume grows polynomially. The growth of the relative volume in the latter case reflects the fact that the volume of the (unconstrained) moment space decreases with increasing dimension.

    Another way to interpret this is that almost all intersections of the moment space with hyperplanes orthogonal to the standard basis vectors are ``small''. The only large intersections are those corresponding to a constraint in which $m_{i_j} = m_{i_j}(\mu^0)$ holds for all $1 \le j \le k$. \\

    We now return to the probabilistic setting and turn to fluctuations of the random moment sequence around the limiting measure $\mu^{\mc C}$ on several scales. For fluctuations of order $1/\sqrt{n}$ we observe Gaussian laws.
    
    \begin{thm}[Central Limit Theorem] \label{uniform_clt}
        Let $l \ge i_k$. We have as $n\to\infty$
        \begin{align}
            \sqrt{n}((m_1^{(n)}, \ldots, m_l^{(n)}) - (m_1(\mu^{\mc C}), \ldots, m_l(\mu^{\mc C})) \to \mathcal{N}(0, \Sigma_l)\label{CLT}
        \end{align}
				in distribution, where $\mathcal{N}(0, \Sigma_l)$ denotes the multivariate normal distribution with mean 0 and $\Sigma_l$ is an $l\times l$ matrix of rank $l - k$ that consists of zeros in the columns and rows $i_1, \ldots, i_k$. 
						\end{thm}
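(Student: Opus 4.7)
The natural strategy is to work in canonical moment coordinates $(p_1,\ldots,p_n)\in(0,1)^n$, which parametrize $\inn\mc M_n([0,1])$ bijectively and have the crucial property that $m_j$ is an explicit polynomial in $p_1,\ldots,p_j$ only. Under the unconstrained uniform distribution on $\mc M_n([0,1])$, a classical identity of Skibinsky gives $p_j\sim\operatorname{Beta}(n-j+1,n-j+1)$ independently; this independence underlies the CLT of \cite{chakemstu1993}. I would carry the same machinery into the constrained setting.

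First, I would push the uniform distribution on $\mc M_n^{\mc C}([0,1])$ forward to canonical moment coordinates. Each constraint $m_{i_j}=c_{i_j}$ is a polynomial equation in $p_1,\ldots,p_{i_j}$, and admissibility of $\mc C$ together with the fact that the canonical moments of $\mu^{\mc C}$ lie in $(0,1)^{i_k}$ (a consequence of Theorem \ref{uniform_lln}) ensure that the Jacobian of the constraint map is non-degenerate near the limiting point. The implicit function theorem then solves, locally, for $k$ of the coordinates $p_1,\ldots,p_{i_k}$ in terms of $i_k-k$ free coordinates $q_1,\ldots,q_{i_k-k}$, yielding a parametrization of $\mc M_n^{\mc C}([0,1])$ by $(q_1,\ldots,q_{i_k-k},p_{i_k+1},\ldots,p_n)$ in which $p_{i_k+1},\ldots,p_n$ are independent Beta$(n-j+1,n-j+1)$ and the free coordinates carry a density obtained from the product of the Beta weights and the Jacobian factor of the constraint map.

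Next I would establish the joint CLT for the transformed coordinates. For $j>i_k$, standard Beta asymptotics give $\sqrt{n}(p_j^{(n)}-1/2)\to\mathcal{N}(0,1/8)$ independently. For the free block $q_1,\ldots,q_{i_k-k}$, a Laplace-type expansion of the induced density around its unique maximizer, identified through Theorem \ref{uniform_lln} as the canonical moments of $\mu^{\mc C}$ on the free coordinates, gives $\sqrt{n}$-scale Gaussian fluctuations with covariance equal to the inverse Hessian of the limiting log-density functional, and the two blocks become asymptotically independent. Finally, the multivariate delta method applied to the smooth polynomial map $(p_1,\ldots,p_l)\mapsto(m_1,\ldots,m_l)$ transfers this joint limit to \eqref{CLT}. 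The structural claims on $\Sigma_l$ are then forced: since $m_{i_j}^{(n)}=c_{i_j}$ holds deterministically, the rows and columns indexed by $i_1,\ldots,i_k$ vanish, and the only nontrivial fluctuations lie in the $(l-k)$-dimensional subspace spanned by the free coordinates, giving rank $l-k$.

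The main obstacle will be the Laplace analysis of the induced density on the $q$-coordinates. In the unconstrained case independence makes this transparent, whereas here the constraint equations are nonlinear polynomials in the canonical moments and couple them in a nontrivial way; one must verify that the Hessian of the log-density at the limiting point is strictly positive definite in the directions transverse to the constraint manifold, and that the contribution of the Beta prefactors matches the $\sqrt{n}$-scaling uniformly. An alternative, likely taken in Section \ref{sec_application_uniform}, is to observe that the uniform law on $\mc M_n^{\mc C}([0,1])$ is a particular instance of the class of densities treated in Section \ref{sec_general_dist}, so one may invoke the general CLT proved in Section \ref{sec_CLT} and only verify its hypotheses together with the explicit computation of $\Sigma_l$ via the canonical moments of $\mu^{\mc C}$.
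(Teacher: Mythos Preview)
Your overall architecture---Laplace analysis on a constrained block plus independent Beta asymptotics on $p_{i_k+1},\ldots,p_l$, followed by the delta method---is correct and matches the paper's. However, the paper differs from your proposal in two substantive ways.

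\textbf{Parametrization.} The paper does \emph{not} use the implicit function theorem on canonical moments to isolate free coordinates $q_1,\ldots,q_{i_k-k}$. Instead it uses the hybrid parametrization of Lemma~\ref{coordinates_uniform}: the first block consists of the $i_k-k$ unconstrained \emph{ordinary} moments $\mathbf{m}_{i_k}^{\mc C}$, and the second block of canonical moments $p_{i_k+1},\ldots,p_n$. The Jacobian is then explicit, and the density on the first block is exactly proportional to $\exp\big((n-i_k)\log(m_{i_k+1}^+-m_{i_k+1}^-)\big)$ times a bounded correction. This avoids the implicit function theorem entirely and puts the Laplace problem directly in moment coordinates.

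\textbf{Non-degeneracy of the Hessian.} You correctly flag this as the main obstacle, but leave it open; in your canonical-moment parametrization the required positive-definiteness would be awkward to check for an arbitrary constraint $\mc C$. The paper's resolution is Lemma~\ref{concave}: the map $\m_{i_k}\mapsto\log(m_{i_k+1}^+-m_{i_k+1}^-)$ is globally strictly concave on $\inn\mc M_{i_k}([0,1])$ with negative definite Hessian at every point. The proof goes through the sum rule $\mathcal{K}(\mu^0\mid\mu)=-\sum_j\log(4p_jq_j)$, which rewrites the moment range as a Kullback--Leibler divergence, and then combines strong concavity of $\log$ with H\"older's inequality. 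This yields non-degeneracy for \emph{every} admissible $\mc C$, and is the key lemma you are missing.

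Finally, your suggested alternative---simply invoking the general CLT of Section~\ref{sec_general_dist}---is explicitly rejected at the beginning of Section~\ref{sec_application_uniform}: verifying the genericity condition of Definition~\ref{def_generic} for $V_1=\cdots=V_{i_k+2}\equiv 0$ and \emph{arbitrary} $\mc C$ is difficult, which is precisely why the paper introduces the special parametrization and the concavity lemma.
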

    We remark in passing that the condition $l \ge i_k$ is only assumed to state the result, especially the form of the matrix $\Sigma_l$, in a convenient way. Of course, the vector $(m_1, \ldots, m_l)$ also exhibits Gaussian fluctuations for any $l < i_k$. For a precise definition of the matrix $\Sigma_l$ we first need certain preliminaries like a parametrization of the moment space. For the reader's convenience, the expression of $\Sigma_l$ is given in \eqref{Sigma_l}.
    
   For our results on fluctuations on larger scales, we first recall the notion of an LDP.
    Let $\mathcal{X}$ be a topological space and $\mathbb{P}_n$ a sequence of probability measures on $\mathcal{X}$, equipped with the Borel $\sigma$-field. $\mathbb{P}_n$ is said to satisfy a \emph{large deviations principle} (LDP) with speed $a_n \to \infty$ and rate function $I$ if $I: \mathcal{X} \to [0, \infty]$ is lower semicontinuous and for all measurable $\Gamma \subset \mathcal{X}$ the inequalities
    \begin{align}
        -\inf \limits_{x \in \inn \Gamma} I(x) \le \liminf \limits_{n \to \infty} \frac{1}{a_n} \log \mathbb{P}_n(\Gamma) \le \limsup  \limits_{n \to \infty} \frac{1}{a_n} \log \mathbb{P}_n(\Gamma) \le -\inf \limits_{x \in \overline{\Gamma}} I(x)\label{def_LDP}
    \end{align}
    hold. $I$ is called a \emph{good rate function}, if its level sets $I^{-1}((-\infty, \alpha])$ are compact for all $\alpha \ge 0$. In the following,  $\mathcal{X} = \mathbb{R}^l$ will always be equipped with the Euclidean topology. 
    
    The next result shows that for fluctuations between the order $1/\sqrt n$ and order $1$, the Gaussian distribution from Theorem \ref{uniform_clt} still provides the leading order asymptotics. Therefore such deviations are called moderate. Throughout this paper $A^t$ denotes the transpose of the matrix $A$.
    
    \begin{thm}[Moderate Deviations Principle] \label{uniform_mdp}
        Let $l \ge i_k$ and $(a_n)_n$ be a sequence with $a_n\to \infty$ and $a_n = o(n^{1/2})$ as $n\to\infty$. Then the sequence
        \begin{align*}
            a_n((m_1^{(n)}, \ldots, m_l^{(n)}) - (m_1(\mu^{\mc C}), \ldots, m_l(\mu^{\mc C})))
        \end{align*}
        satisfies an LDP with speed $\frac{n}{a_n^2}$ and good rate function
        \begin{align*}
            I(x) :=
            \begin{cases}
                \frac{1}{2} x^t \Sigma_l x &, x_{i_1} = \ldots = x_{i_k} = 0 \\
                \infty &,\text{else}
            \end{cases},
        \end{align*}
        where $\Sigma_l$ is the covariance matrix from Theorem \ref{uniform_clt}.
    \end{thm}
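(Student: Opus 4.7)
The plan is to transport the moderate deviations from the canonical moment parametrization of the moment space, where the uniform distribution decouples into independent coordinates, and then match the resulting quadratic rate function with the CLT covariance $\Sigma_l$ from Theorem \ref{uniform_clt}. Recall that every interior point of $\mc M_n([0,1])$ admits canonical coordinates $(p_1,\ldots,p_n)\in(0,1)^n$ under which the uniform distribution on $\mc M_n([0,1])$ factorizes into independent Beta laws; this observation underlies the arguments of \cite{chakemstu1993,gamloz2004}. Under the admissible constraint $\mc C$, the canonical coordinates $p_{i_1},\ldots,p_{i_k}$ become smooth implicit functions of the remaining free canonical moments, and the pushforward of the uniform distribution on $\mc M_n^{\mc C}([0,1])$ onto the free coordinates is a product of Beta-type densities weighted by the Jacobian of this implicit change of variables.

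My first step would be to establish an MDP at speed $n/a_n^2$ for the centered and rescaled free canonical moments $a_n(p_j^{(n)}-q_j^{\mc C})$, where $q_j^{\mc C}$ denotes the limit canonical moment of $\mu^{\mc C}$. Each such coordinate is approximately Beta-distributed with parameters of order $n$, so a Gärtner--Ellis argument applied to the explicit log-moment generating functions yields a diagonal quadratic rate function with variances dictated by the limiting Beta law. For the constrained coordinates, Taylor-expanding the implicit constraint map at $(q_1^{\mc C},\ldots,q_{i_k}^{\mc C})$ and using that the quadratic remainder is exponentially negligible at the MDP scale shows that $a_n(p_{i_j}^{(n)}-q_{i_j}^{\mc C})$ is asymptotically a linear combination of the free MDP variables. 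This produces a joint MDP on $\R^l$ for the canonical moments with a degenerate quadratic rate function supported on the affine subspace cut out by the linearized constraint.

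Next I would push the MDP through the polynomial map $\Phi:(p_1,\ldots,p_l)\mapsto (m_1,\ldots,m_l)$, which is $C^\infty$ on $(0,1)^l$. The contraction principle delivers an MDP for $a_n((m_1^{(n)},\ldots,m_l^{(n)})-(m_1(\mu^{\mc C}),\ldots,m_l(\mu^{\mc C})))$ whose rate function is obtained by linearizing $\Phi$ at the limit point. Since the $i_j$-th coordinate of $\Phi$ equals the prescribed $c_{i_j}$ identically on the constraint surface, the linearized image lies in the hyperplane $\{x_{i_1}=\ldots=x_{i_k}=0\}$, forcing $I(x)=\infty$ off this hyperplane. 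On the hyperplane, the rate function is a quadratic form; because an MDP and its companion CLT are constrained to share the same quadratic form, this must coincide with $\frac{1}{2} x^t \Sigma_l x$, completing the identification.

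The main obstacle is the uniform control of the constraint map at the MDP scale: one needs the Taylor remainders to be exponentially negligible at speed $n/a_n^2$ throughout the full range $a_n\to\infty$, $a_n=o(\sqrt n)$, together with a uniform MDP for Beta laws with shape parameters of order $n$. Admissibility of $\mc C$ guarantees that the limit canonical moments $q_{i_j}^{\mc C}$ lie strictly inside $(0,1)$, so the Jacobian of the implicit constraint map stays uniformly bounded away from zero and the higher derivatives are controlled; combined with standard exponential tail estimates for Beta distributions, this supplies the uniformity required to pass through the contraction step and conclude.
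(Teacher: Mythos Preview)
Your outline contains a real gap at the first step. Under the constraint $\mc C$, the free canonical moments with index $j\le i_k$ are \emph{not} approximately Beta-distributed and are \emph{not} independent. The Jacobian of Lemma~\ref{lemma_Jacobian} shows that the density of the free coordinates $\mb y_n^{\mc C}$ is proportional to $\prod_{j=1}^n(p_j(1-p_j))^{n-j-d_{j,\mc C}}$ on $\tilde A_{i_k}$, and this product runs over \emph{all} indices, including the constrained ones $i_1,\dots,i_k$. Since each $p_{i_j}$ is a function of the lower free coordinates, the factor $\prod_{j=1}^k(p_{i_j}(1-p_{i_j}))^{n-i_j-d_{i_j,\mc C}}$ couples all the free $p_j$ with $j<i_k$ in a genuinely multivariate way. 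A G\"artner--Ellis computation coordinate by coordinate therefore does not apply, and the ``diagonal quadratic rate function'' you describe is not what one obtains.

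What is actually needed is a multivariate Laplace/MDP argument for the density $\exp(-nW(\mb y_{i_k}^{\mc C}))R(\mb y_{i_k}^{\mc C})$ with $W(\mb y_{i_k}^{\mc C})=-\sum_{j=1}^{i_k}\log(p_j(1-p_j))$, and for that you must know that the Hessian of $W$ at its unique minimizer is non-degenerate for \emph{every} admissible constraint $\mc C$. You never address this, and it is precisely the point the paper singles out as nontrivial: it remarks explicitly that verifying the genericity conditions for $V_1=\dots=V_{i_k+2}\equiv0$ and arbitrary $\mc C$ is difficult in the canonical-moment parametrization. The paper's workaround is to change coordinates, using the mixed parametrization of Lemma~\ref{coordinates_uniform} (free \emph{ordinary} moments $\m_{i_k}^{\mc C}$ together with $p_{i_k+1},\dots,p_n$), and then to prove Lemma~\ref{concave}: the map $(m_1,\dots,m_n)\mapsto\log\big(4^n(m_{n+1}^+-m_{n+1}^-)\big)$ is strictly concave on $\inn\mc M_n([0,1])$ with everywhere negative definite Hessian. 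That lemma---proved via the sum rule \eqref{sumrule} linking $\mc K(\mu^0\mid\mu)$ to the canonical moments and a strong-concavity estimate for the Kullback--Leibler functional---is the substantive ingredient that makes Proposition~\ref{mdp} applicable and simultaneously pins down the quadratic form as $\Sigma_l$. Your proposal has no analogue of this step; the final remark that ``the Jacobian of the implicit constraint map stays uniformly bounded away from zero'' controls first derivatives of the constraint map, not the non-degeneracy of the Hessian of the potential, and so does not fill the gap.
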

   
    Note that for $l < i_k$ the moderate deviations principle (MDP) can be obtained from the previous result by an application of the contraction principle. This remark extends to all LDPs in this article.
    
    Finally, for fluctuations of order $1$ we can see that the random moment sequences satisfy a large deviations principle with a rate function that is universal for all $\mc C$ up to an additive constant. To state it, recall that given $(m_1,\dots,m_l)\in \mc M_l([0,1])$, the possible range of $m_{l+1}$ such that $(m_1,\dots,m_{l+1})\in\mc M_{l+1}([0,1])$ holds, is an interval $[m_{l+1}^-,m_{l+1}^+]$.
    \begin{thm}[Large Deviations Principle]\label{uniform_ldp}
        Let $l \ge i_k$ and denote
        \begin{align}
            I(m_1,\dots,m_l) :=
            \begin{cases}
                -\log(m_{l + 1}^+ - m_{l + 1}^-) &, (m_1,\dots,m_l) \in \mc M_l^{\mc C}([0,1]) \\
                \infty &, \text{else}
            \end{cases}. \label{idef}
        \end{align}
        Then $(m_1^{(n)},\ldots, m_l^{(n)})$ satisfies an LDP with speed $n$ and good rate function $I(\,\cdot\,) - I(m_1(\mu^{\mc C}),\dots,m_l(\mu^{\mc C}))$.
    \end{thm}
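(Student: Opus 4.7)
The plan is to exploit the canonical-moment parametrization of $\mc M_n([0,1])$ to obtain an explicit exponential-in-$n$ density for $(m_1^{(n)},\ldots,m_l^{(n)})$, and then apply a standard Laplace argument to extract the LDP. Parametrize $\inn\mc M_n([0,1])$ bijectively by canonical moments $(p_1,\ldots,p_n)\in(0,1)^n$ via $p_j:=(m_j-m_j^-)/\zeta_j$ with $\zeta_j:=m_j^+-m_j^-$; the inverse map is triangular with Jacobian $\prod_{j=1}^n \zeta_j$, and Skibinsky's product formula (standard canonical-moment machinery, cf.\ Section~\ref{sec_limit}) yields $\zeta_j=\prod_{i=1}^{j-1}p_i(1-p_i)$ for $j\ge 2$.

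\textbf{Step 1: density of the first $l$ moments.} Since $l\ge i_k$, the constraint $\mc C$ is entirely carried by $(m_1,\ldots,m_l)$ and leaves $m_{l+1},\ldots,m_n$ free. Pushing the uniform distribution on $\mc M_n^{\mc C}([0,1])$ forward to the $(l-k)$-dimensional space of free coordinates among the first $l$ moments gives a density on $\mc M_l^{\mc C}([0,1])$ proportional to the fiber volume
\begin{align*}
V_n^{\textup{fib}}(m_1,\ldots,m_l):=\int 1_{\{(m_1,\ldots,m_n)\in\mc M_n([0,1])\}}\,dm_{l+1}\cdots dm_n.
\end{align*}
Changing variables to the tail canonical moments and telescoping,
\begin{align*}
\prod_{j=l+1}^n \zeta_j\;=\;\zeta_{l+1}^{\,n-l}\prod_{i=l+1}^{n-1}\bigl(p_i(1-p_i)\bigr)^{n-i},
\end{align*}
so the dependence on $(m_1,\ldots,m_l)$ factors out and the remaining integral over $(p_{l+1},\ldots,p_n)\in[0,1]^{n-l}$ reduces to a product of Beta functions. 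Hence $V_n^{\textup{fib}}(m_1,\ldots,m_l)=D_{n,l}\cdot \zeta_{l+1}(m_1,\ldots,m_l)^{n-l}$ with $D_{n,l}$ independent of $(m_1,\ldots,m_l)$, and the density of $(m_j^{(n)})_{j\in\{1,\ldots,l\}\setminus\{i_1,\ldots,i_k\}}$ takes the form
\begin{align*}
f_n^{\mc C}(m_1,\ldots,m_l)\;=\;\frac{1}{Z_n^{\mc C}}\,\zeta_{l+1}(m_1,\ldots,m_l)^{n-l}\,1_{\mc M_l^{\mc C}([0,1])}(m_1,\ldots,m_l).
\end{align*}

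\textbf{Step 2: LDP by Laplace's method.} The rate function in \eqref{idef} is exactly $I=-\log\zeta_{l+1}$ on $\mc M_l^{\mc C}([0,1])$, extended by $+\infty$, so $f_n^{\mc C}\propto e^{-(n-l)I}$. For closed $F\subset\R^l$, the standard upper bound
\begin{align*}
\limsup_{n\to\infty}\tfrac{1}{n}\log\P\bigl((m_1^{(n)},\ldots,m_l^{(n)})\in F\bigr)\;\le\;-\inf_{F\cap\mc M_l^{\mc C}} I\;-\;\lim_{n\to\infty}\tfrac{1}{n}\log Z_n^{\mc C}
\end{align*}
uses boundedness of $\mc M_l$, and the matching open-set lower bound follows by restricting the integral to a small ball around a near-minimizer of $I$. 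By Theorem~\ref{uniform_lln}, $I$ attains its unique minimum on $\mc M_l^{\mc C}$ at $(m_1(\mu^{\mc C}),\ldots,m_l(\mu^{\mc C}))$, so Laplace's method on the normalization yields $\tfrac{1}{n}\log Z_n^{\mc C}\to -I(\mu^{\mc C})$, and subtracting gives the LDP with rate function $I-I(\mu^{\mc C})$. Goodness is immediate from continuity of $\zeta_{l+1}$ on the compact set $\mc M_l^{\mc C}([0,1])$.

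\textbf{Main obstacle.} The technical heart is the fiber-volume computation: the telescoping of $\prod_{j\ge l+1}\zeta_j$ via Skibinsky's formula decouples the integrated canonical moments from $(m_1,\ldots,m_l)$ and reduces the fiber volume to a one-parameter exponential in $\zeta_{l+1}$. Once this explicit density is in hand, the LDP is essentially a consequence of Laplace asymptotics, with the identification of the unique minimizer of $I$ needed for the normalization being supplied by the law of large numbers in Theorem~\ref{uniform_lln}.
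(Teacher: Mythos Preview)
Your core argument---writing the marginal density of the first $l$ moments as a constant times $(m_{l+1}^+-m_{l+1}^-)^{n-l}$ via Skibinsky's product formula and then reading off the LDP by Laplace asymptotics---is correct and is essentially the paper's approach. The paper uses a slightly different parametrization (ordinary moments $\mathbf m_{i_k}^{\mc C}$ for the first $i_k$ coordinates, canonical moments $p_{i_k+1},\ldots,p_n$ thereafter) and then applies a general Laplace-type proposition plus the contraction principle, but the mechanism is the same as yours.

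There is, however, a genuine circularity in your identification of the normalizing constant. You invoke Theorem~\ref{uniform_lln} to assert that $I$ has its unique minimum at $(m_1(\mu^{\mc C}),\ldots,m_l(\mu^{\mc C}))$. But in the paper's logical order the law of large numbers is \emph{deduced from} the LDP (via Borel--Cantelli), not the other way around; so you cannot cite it here. Your Laplace argument delivers the rate function $I-\inf I$ without any trouble, but to match the stated form $I-I(\mu^{\mc C})$ you must identify $\inf_{\mc M_l^{\mc C}} I$ directly. The paper does this in two steps: (i) strict concavity of $\m_l\mapsto \log(m_{l+1}^+-m_{l+1}^-)$ on $\inn\mc M_l([0,1])$ (Lemma~\ref{concave}), which gives a unique minimizer $\m^*$ of $I$ on $\mc M_l^{\mc C}$; and (ii) the sum rule $\mc K(\mu^0\mid\mu)=-\sum_{j\ge 1}\log(4p_j(1-p_j))$, which shows that the measure with first $i_k$ canonical moments determined by $\m^*$ and $p_j=\tfrac12$ for $j>i_k$ coincides with the Kullback--Leibler minimizer $\mu^{\mc C}$. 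Once you replace the appeal to Theorem~\ref{uniform_lln} by this direct identification, your proof is complete and equivalent to the paper's.
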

    
    The previous theorem shows an LDP for the first $l$ random moments. By a projective limit argument it is possible to prove an LDP for the infinite random moment sequence. Since the Hausdorff moment problem is determinate, i.e.~each probability measure on a compact interval is determined uniquely by its moment sequence, we can equivalently obtain an LDP for probability measures. 
    
    \begin{thm}[Functional Large Deviations Principle]\label{uniform_ldp_functional}
        Let $(\mu^{(n)})_n$  be a sequence of random probability measures in $\mc P([0,1])$ such that the vector of corresponding moments $(m_1(\mu^{(n)}), \ldots, m_n(\mu^{(n)}))$ is uniformly distributed in $\mc M_n^{\mc C}([0,1])$ for each $n$. Then $(\mu^{(n)})_n$ satisfies an LDP with speed $n$ and good rate function
        \begin{align*}
            I(\mu) :=
            \begin{cases}
                \mathcal{K}(\mu^0 \mid \mu) - \mathcal{K}(\mu^0 \mid \mu^{\mc C}) &, \mu^0 \ll \mu \text{ and } \mu \in \mathcal{P}^{\mc C}([0, 1]) \\
                \infty &, \text{else}
            \end{cases}
        \end{align*}
        on the space $\mathcal{P}([0, 1])$ equipped with the weak topology, where the Kullback-Leibler divergence $\mathcal{K}$ is defined in \eqref{kullback}.
    \end{thm}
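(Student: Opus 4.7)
The plan is to combine the finite-dimensional LDP from Theorem~\ref{uniform_ldp} with a projective limit argument and then transport the resulting LDP on sequence space to $\mathcal{P}([0,1])$ via the Hausdorff moment map. First, I would apply the Dawson--G\"artner theorem to the sequence of LDPs given by Theorem~\ref{uniform_ldp} for $l=i_k,i_k+1,i_k+2,\ldots$: since the truncation maps $(m_1,\ldots,m_{l+1})\mapsto(m_1,\ldots,m_l)$ are continuous and the finite-dimensional rate functions are already in ``telescoping'' form, the projective limit of $(m_1^{(n)},m_2^{(n)},\ldots)$ (viewed in $\R^\N$ with the product topology, restricted to $[0,1]^\N$) satisfies an LDP at speed $n$ with rate function
\begin{align*}
\tilde I(m_1,m_2,\ldots)=\sup_{l\geq i_k}\bigl[I_l(m_1,\ldots,m_l)-I_l(m_1(\mu^{\mc C}),\ldots,m_l(\mu^{\mc C}))\bigr],
\end{align*}
where $I_l$ is the rate function from \eqref{idef}.

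Second, I would identify this supremum with a relative entropy. The key observation, going back to \cite{gamloz2004}, is that for $(m_1,m_2,\ldots)$ in the moment space of some $\mu\in\mathcal{P}([0,1])$ one has the Szeg\H{o}-type identity
\begin{align*}
\sup_{l\geq 1}\bigl[-\log(m_{l+1}^+-m_{l+1}^-)\bigr]=-\lim_{l\to\infty}\log(m_{l+1}^+-m_{l+1}^-)=\mathcal{K}(\mu^0\mid\mu),
\end{align*}
which follows from the product representation of $m_{l+1}^+-m_{l+1}^-$ in canonical moments and the Szeg\H{o} theorem relating the geometric mean of the density to the limit of such products. Applying this identity both to $\mu$ and to $\mu^{\mc C}$, and using the telescoping structure, yields $\tilde I(m_1,m_2,\ldots)=\mathcal{K}(\mu^0\mid\mu)-\mathcal{K}(\mu^0\mid\mu^{\mc C})$ whenever the sequence lies in $\mathcal{M}^{\mc C}_\infty([0,1])$ and is the moment sequence of an actual probability measure $\mu\ll$ an appropriate reference; outside we set $\tilde I=\infty$.

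Third, I would transport the LDP to $\mathcal{P}([0,1])$ with the weak topology. Because $[0,1]$ is compact, the Hausdorff moment problem is determinate, so the moment map $\Phi\colon\mathcal{P}([0,1])\to[0,1]^\N$, $\mu\mapsto(m_j(\mu))_{j\geq 1}$, is a continuous injection, and since $\mathcal{P}([0,1])$ is compact in the weak topology and $\Phi$'s image is closed, $\Phi$ is a homeomorphism onto its image. The sequence of random measures $(\mu^{(n)})$ is the preimage under $\Phi$ of the random moment sequence, so the contraction principle (applied to $\Phi^{-1}$) yields an LDP for $(\mu^{(n)})$ at speed $n$ with good rate function
\begin{align*}
I(\mu)=\tilde I(\Phi(\mu))=\mathcal{K}(\mu^0\mid\mu)-\mathcal{K}(\mu^0\mid\mu^{\mc C})
\end{align*}
whenever $\mu^0\ll\mu$ and $\mu\in\mathcal{P}^{\mc C}([0,1])$, and $\infty$ otherwise; compactness of level sets is automatic from compactness of $\mathcal{P}([0,1])$ and lower semicontinuity of $\mathcal{K}(\mu^0\mid\cdot)$.

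The main obstacle is the second step: verifying that the supremum of the truncated rate functions $-\log(m_{l+1}^+-m_{l+1}^-)$ agrees with $\mathcal{K}(\mu^0\mid\mu)$ for every $\mu\in\mathcal{P}([0,1])$, including measures singular with respect to $\mu^0$ (where both sides are $+\infty$, but one needs the canonical moments to degenerate appropriately). This is handled through the canonical moment parametrization and the classical Szeg\H{o} limit theorem, and is where one must be careful in aligning the admissibility condition of $\mc C$ with the finiteness of the constant $\mathcal{K}(\mu^0\mid\mu^{\mc C})$, which is guaranteed by the explicit Bernstein--Szeg\H{o} form \eqref{representation_mu^C} of $\mu^{\mc C}$.
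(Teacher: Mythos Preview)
Your approach is essentially the same as the paper's: Dawson--G\"artner on the finite-dimensional LDPs of Theorem~\ref{uniform_ldp}, identification of the projective rate function via the sum rule/Szeg\H{o} identity in canonical moments, and transport to $\mathcal P([0,1])$ by the contraction principle using determinacy of the Hausdorff moment problem.

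One correction: the displayed identity
\[
\sup_{l\geq 1}\bigl[-\log(m_{l+1}^+-m_{l+1}^-)\bigr]=\mathcal{K}(\mu^0\mid\mu)
\]
is false as written, since $m_{l+1}^+-m_{l+1}^-=\prod_{j=1}^l p_j(1-p_j)\le 4^{-l}$, so the left-hand side is always $+\infty$. The correct statement (the sum rule \eqref{sumrule} used in the paper) is
\[
\mathcal{K}(\mu^0\mid\mu)=-\sum_{j=1}^\infty\log\bigl(4p_j(1-p_j)\bigr)=\sup_{l\ge 1}\bigl[-\log\bigl(4^l(m_{l+1}^+-m_{l+1}^-)\bigr)\bigr].
\]
This does not damage your argument, because the Dawson--G\"artner rate function is the supremum of the \emph{centered} rate functions $I_l(\cdot)-I_l(\m_l(\mu^{\mc C}))$, and the spurious $l\log 4$ cancels in the difference. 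The paper makes this cancellation explicit from the start by rewriting the finite-dimensional rate as $\mathcal K(\mu^0\mid\mu(\m_l))-\mathcal K(\mu^0\mid\mu^{\mc C})$, where $\mu(\m_l)$ is the measure with first $l$ moments $\m_l$ and canonical moments $\tfrac12$ thereafter; monotonicity in $l$ is then immediate since each summand $-\log(4p_jq_j)\ge 0$, and for $l\ge i_k$ the subtracted term is constant. With that adjustment your proof goes through.
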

    
    \begin{rem}
        We have left the particular choice of the distribution of $\mu^{(n)}$ open and only demand that the distribution of its first $n$ moments is uniform on the $n$-th restricted space. This leaves many possible choices for the distribution of $\mu_n$. For example, $\mu^{(n)}$ could be chosen as the upper or lower principal representation of a random moment vector $(m_1^{(n)}, \ldots, m_n^{(n)})$, see e.g.~\cite{Skibinsky86}. There is a nice constructive approach to $\mu^{(n)}$ using spectral measures of Jacobi matrices which we will discuss in Section \ref{sec_universality}.
    \end{rem}

        % Section
\section{General distributions on constrained moment spaces and universality} \label{sec_general_dist}
While Section \ref{sec_uniform} dealt exclusively with the uniform distribution on $\mc M_n^{\mc C}([0,1])$, we will now turn to more general distributions on $\mc M_n^{\mc C}(E)$ for $E=[0,1]$ as well as the unbounded $E=\R_+$ and $E=\R$. Particular emphasis will be on universal behavior of the random moment sequences.
    
The first important observation towards general distributions on $\mc M_n^{\mc C}(E)$ consists in the fact that the ordinary moments $m_1,\dots,m_n$ are not good coordinates to define probability measures on the moment spaces $\mc M_n(E)$. Indeed, the ordinary moments are not independent but strongly dependent and moreover the possible range for the $l+1$-th moment given the first $l$ moments decreases exponentially in $l$ (cf.~\cite{karsha1953}). For these reasons, it is desirable to have a new system of independent coordinates that scale with the available moment range. For the bounded moment space $\mc M_n([0,1])$, such coordinates have been introduced by Skibinsky in a series of papers \cite{skibinsky1967,skibinsky1968,skibinsky1969}.  To ease notation, let us denote a vector in bold with a subscript indicating dimension, e.g.
\begin{align}
\m_n:=(m_1,\dots,m_n).
\end{align}
    For $m_j^+ \ne m_j^-$, the $j$-th canonical moment is defined as
    \begin{align}
       p_j := \frac{m_j -  m_j^-}{m_j^+ - m_j^-}.\label{def_p_j}
    \end{align}
		They are left undefined if $m_j^+ = m_j^-$ in which case the corresponding moment sequence lies on the boundary of $\mc M_n([0,1])$. The canonical moment simply is the relative position of the ordinary moment in the available section.
  In fact, this construction of the canonical moments induces a smooth bijection
    \begin{align*}
        \varphi_n^{[0, 1]}:\left\{
        \begin{array}{ccc}
            (0, 1)^n  & \to       &   \inn \mathcal{M}_n([0, 1])        \\
            (p_1, \ldots, p_n) & \mapsto   & (m_1, \ldots, m_n) 
        \end{array}
        \right.
    \end{align*}
    between the open unit cube $(0, 1)^n$ and the interior of the $n$-th moment space.

    On the moment space $\mathcal{M}_n([0, \infty))$ the range of the $(n + 1)$-th moment is a half-open interval $[m_{n + 1}^-, \infty)$ and a good system of coordinates is given by
    \begin{align}
        z_j := \frac{m_j - m_j^-}{m_{j - 1} - m_{j - 1}^-} \label{def_z_j}
    \end{align}
    with $m_0 := 1$, $m_0^- := 0$. These parameters are well-defined for all moment sequences in the interior of the moment space. Indeed, they yield a smooth bijection
    \begin{align*}
        \varphi_n^{[0, \infty)}:\left\{
        \begin{array}{ccc}
            (0, \infty)^n  & \to       &    \inn \mathcal{M}_n([0, \infty))       \\
             (z_1, \ldots, z_n) & \mapsto   & (m_1, \ldots, m_n).
        \end{array}
        \right.
    \end{align*}
    
    Finally, the moment space $\mathcal{M}_n(\mathbb{R})$ can be parametrized by the recurrence coefficients of orthogonal polynomials. Let for a vector $\m_n\in\mc M_n(\R)$, $\mu\in \mc P(\R)$ be a measure with the first $n$ moments given by $\m_n$. It is well-known that to each $\mu\in\mc P(\R)$ there is a unique sequence of monic polynomials $(P_j)_j$ with $\operatorname{deg}P_j=j$ that are orthogonal in $L^2(\R,\mu)$. If $\mu$ has a finite support then the sequence of orthogonal polynomials is finite. $P_j$ is determined by the first $2j-1$ moments of $\mu$ which shows that different measures representing a moment sequence $\m_{n}$ have the same (first) orthogonal polynomials. These polynomials satisfy a three-term recursion
    \begin{align}
        P_j(x) = (x - \alpha_j) P_{j- 1}(x) - \beta_{j - 1} P_{j - 2}(x), \label{pol_recursion}
    \end{align}
    where $\alpha_j \in \mathbb{R}$ and $\beta_{j - 1} > 0$. Moreover, by Favard's theorem (cf. Theorem~ I.4.4 in \cite{chihara1978}) each sequence of parameters $\alpha_j\in \R$, $\beta_j>0$ yield a sequence of monic orthogonal polynomials $P_j$. This induces smooth bijections

    \begin{align*}
        \varphi_{2n}^{\mathbb{R}}&:\left\{
        \begin{array}{ccc}
           (\mathbb{R} \times (0, \infty))^n  & \to       &   \inn \mathcal{M}_{2n}(\mathbb{R})        \\
             (\alpha_1, \beta_1, \alpha_2, \ldots, \alpha_n, \beta_n) & \mapsto   & (m_1, \ldots, m_{2n})
        \end{array}
        \right.\\
        \varphi_{2n + 1}^{\mathbb{R}}&:\left\{
        \begin{array}{ccc}
           (\mathbb{R} \times (0, \infty))^n \times \mathbb{R}   & \to       &  \inn \mathcal{M}_{2n + 1}(\mathbb{R})        \\
             (\alpha_1, \beta_1, \alpha_2, \ldots, \alpha_n, \beta_n, \alpha_{n + 1})  & \mapsto   &(m_1, \ldots, m_{2n + 1})
        \end{array}
        \right. .
    \end{align*}
    
    In order to unify notation in the three different cases $E=[0,1],\R_+,\R$, we set
        \begin{align}\label{def_D_j}
            y_j := 
            \begin{cases}
                p_j & ,E = [0, 1] \\
                z_j & ,E = [0, \infty) \\
                \alpha_{(j + 1)/2} & ,E = \mathbb{R} \text{ and $j$ odd} \\
                \beta_{j /2} & ,E = \mathbb{R} \text{ and $j$ even}
            \end{cases}
            \qquad \qquad 
            D_j := 
            \begin{cases}
                (0, 1) & ,E = [0, 1] \\
                (0, \infty) & ,E = [0, \infty) \\
                \mathbb{R} & ,E = \mathbb{R} \text{ and $j$ odd} \\
                (0, \infty) & ,E = \mathbb{R} \text{ and $j$ even}
            \end{cases},
        \end{align}
        so that in all three cases
        \begin{align}
            \varphi_{n}^E&:\left\{
            \begin{array}{ccc}
              D_1 \times \cdots \times D_n    & \to       &    \inn \mathcal{M}_{n}(E)      \\
                (y_1, \ldots, y_n)  & \mapsto   & (m_1, \ldots, m_{n})
            \end{array}
            \right.\label{def_can_coord}
        \end{align}
        yields a smooth bijection. Throughout this paper we will call the $y_j$ \emph{canonical coordinates}. 
				
We will give the Jacobians of parameterizations of the constrained moment spaces in Lemma \ref{lemma_Jacobian} below. At this stage it suffices to note the remarkable result
\begin{align}
\left\lv \det\left[\frac{\partial \varphi_{n}^{[0,1]}(p_1,\dots,p_n)}{\partial (p_1,\dots,p_n)}\right]\right\rv=\prod \limits_{j = 1}^n (p_j(1 - p_j))^{n - j}\,,\label{Skibinsky}
\end{align}
which is due to \cite[p.~1f]{skibinsky1967}. Consequently, we see that for the uniform distribution on $\mc M_n([0,1])$, the random canonical coordinates $p_1^{(n)},\dots,p_n^{(n)}$ have a density on $[0,1]^n$ proportional to the r.h.s.~of \eqref{Skibinsky}. In other words, the canonical coordinates are independent and $p_j^{(n)}$ has a beta$(n-j+1,n-j+1)$ distribution. As we are interested mostly in the first $l$ moments as $n\to\infty$, we have $j\ll n$ and thus the canonical moments are independent and nearly identically distributed. The class of  distributions introduced in \cite{DTV} and adapted here to the constrained spaces, generalizes from the uniform distribution on $\mc M_n([0,1])$ in three ways. Firstly, we include distributions on the unbounded spaces $\mc M_n(\R_+)$ and $\mc M_n(\R)$. Secondly, we generalize from the beta distribution to rather arbitrary densities while keeping the two key properties of independence and (nearly) identical distribution. And lastly, we allow for different densities for even and odd coordinates. This originates in the different roles played by even and odd moments. As even moments are always positive and contain some information about the size of the support of the measure, odd moments contain information about symmetries.\\

    We can now introduce our general class of distributions on the constrained moment spaces. Recall that $\mc C$ is always assumed to be an admissible constraint for $\mc P(E)$.\\

Let $V_1: \overline{D_1} \to \mathbb{R},\,\dots,\,V_{i_k+2}: \overline{D_{i_k+2}} \to \mathbb{R}$ be continuous functions satisfying in the cases $E=\R_+,\R$ the integrability conditions 
				\begin{align}\label{int_conditions}
				V_j(y_j)\geq \begin{cases}
				(2+\varepsilon)\log\lv y_j\rv,\quad j=1,2,&\quad E=\R_+,\\
				(1+\varepsilon)\log\lv y_j\rv,\quad j=1,&\quad E=\R,\\
				(2+\varepsilon)\log\lv y_j\rv,\quad j=2,&\quad E=\R,
				\end{cases}
						\end{align}
						for $\lv y_i\rv$ large enough and some $\varepsilon>0$.
Denote by 
\begin{align}
\m_n^{\mc C}:=(m_j,1\leq j\leq n,j\not=i_1,\dots,i_k)\label{m_n^C}
\end{align}
the vector of unconstrained moments.
Finally we set  for notational convenience $$V_{i_k + 2j-1} := V_{i_k+1}\text{  and  }V_{2i_k + 2j} := V_{i_k+2},\quad j\geq2$$ and define the Borel probability measure $\mathbb{P}^{\C}_{n, E, V}$ on $M_n^{\mc C}(E)$ by $\mathbb{P}^{\mc C}_{n, E, V}(\partial \mc M_n^{\mc C}(E))=0$ and on the  interior of $\mc M_n^{\mc C}(E)$ by  the density
        \begin{align}
            P^{\mc C}_{n,E, V}(\m_n)  :=\frac{1}{Z^{\mc C}_{n,E,V}} \exp\Big(-n \sum \limits_{j = 1}^n V_j(y_j(\m_n))\Big)\label{def_P}
        \end{align}
        w.r.t.~the $(n-k)$-dimensional Lebesgue measure on $\mc M_n^{\mc C}(E)$ as defined at the beginning of Section \ref{sec_uniform}. Here $y_j=y_j(\m_n),\, j=1,\dots,n,$ are the canonical coordinates associated to the moment sequence $\m_n=(m_1, \ldots, m_n)$ and 
				\begin{align}
				Z^{\mc C}_{n,E,V}:=\int_{\mc M_n^{\mc C}(E)}\exp\Big(-n \sum \limits_{j = 1}^n V_j(y_j(\m_n))\Big)d\m_n^{\mc C}\label{normalizing}
				\end{align}
				is the normalizing constant. $\P_{n,E,V}^{\mc C}$ is for the empty constraint $\C=\emptyset$ determined by $V_1$ and $V_2$ and is precisely the class of distributions found in \cite{DTV} showing universal behavior and classical limiting measures from random matrix theory or free probability theory. In the presence of a constraint influencing the first $i_k$ moments, the results of this section will show that $\P_{n,E,V}^{\mc C}$ is an appropriate class of distributions to study $\mc M_n^\C(E)$. More precisely, we will find universal behavior within this class for generic functions $V_1,\dots,V_{i_k+2}$, given by special  Bernstein-Szeg\H{o} measures. We remark in passing that the product form of the density of the first $i_k$ moments is not necessary for observing the universal limits and could be extended to some function of the form $\exp(-nV(m_1,\dots,m_{i_k}))$. However, the product form is convenient for notation and the class we consider is exhaustive in the sense that any universal limit law of the extended class can be observed in the smaller class. 
				
As it is not immediate that $0<Z^{\mc C}_{n,E,V}<\infty$, we formulate the following lemma which is proved at the beginning of Section \ref{sec_limit}.

\begin{lemma} \label{lemma_density}
For $V_1,\dots,V_{i_k+2}$ satisfying \eqref{int_conditions}, $\mathbb P^{\mc C}_{n,E,V}$ is a probability measure on $\mc M_n^{\mc C}(E)$.
\end{lemma}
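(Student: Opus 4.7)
The plan is to establish that the normalizing constant $Z^{\mc C}_{n,E,V}$ defined in~\eqref{normalizing} lies in $(0,\infty)$, since this is the only non-trivial requirement: once $Z$ is finite and positive, $\mathbb P^{\mc C}_{n,E,V}$ is automatically a Borel probability measure on $\mc M_n^{\mc C}(E)$.

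For strict positivity, I would exploit admissibility of $\mc C$. By Definition~\ref{admissible}, there exists a point $\m^*\in\mc M_{i_k}^{\mc C}(E)\cap\operatorname{Int}\mc M_{i_k}(E)$, which extends to a moment sequence $\m_n^*\in\mc M_n^{\mc C}(E)$ in the relative interior of the constrained moment space by choosing the tail canonical coordinates $y_{i_k+1},\dots,y_n$ strictly inside the respective $D_j$. At $\m_n^*$ every canonical coordinate is interior, so by continuity of the $V_j$'s and of the inverse canonical coordinate map, the weight $\exp(-n\sum V_j(y_j))$ is bounded below by a positive constant on an open $(n-k)$-dimensional neighborhood of $\m_n^*$ in $\mc M_n^{\mc C}(E)$. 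That neighborhood carries positive Lebesgue measure, yielding $Z^{\mc C}_{n,E,V}>0$.

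For finiteness I would change variables from the unconstrained moments $\m_n^{\mc C}$ to the free canonical coordinates $\{y_j:j\notin\{i_1,\dots,i_k\}\}$; the constraint equations $m_{i_j}=c_{i_j}$ are monotone in $y_{i_j}$ once $y_1,\dots,y_{i_j-1}$ are fixed, so $y_{i_1},\dots,y_{i_k}$ become smooth implicit functions of the free ones. Applying the Jacobian formula of Lemma~\ref{lemma_Jacobian} gives
\begin{align*}
Z^{\mc C}_{n,E,V}=\int \exp\!\Big(-n\sum_{j=1}^n V_j(y_j)\Big)\,|J|\,\prod_{j\notin\{i_1,\dots,i_k\}}dy_j,
\end{align*}
with $|J|$ polynomial in the $y_j$'s. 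For $E=[0,1]$ the domain is bounded, each $V_j$ is continuous on $[0,1]$, and $|J|$ is Lebesgue-integrable (as in~\eqref{Skibinsky}), so finiteness is immediate. For $E=\R_+$ and $E=\R$, the integrability conditions~\eqref{int_conditions}, propagated to the tail through the alternating convention $V_{i_k+2j-1}=V_{i_k+1}$, $V_{i_k+2j}=V_{i_k+2}$, force $\exp(-nV_j(y_j))\le|y_j|^{-n(2+\varepsilon)}$ (respectively $|y_j|^{-n(1+\varepsilon)}$) for large $|y_j|$, which strictly dominates the polynomial growth of $|J|$ in each free tail coordinate, and each one-dimensional tail integral therefore converges.

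The main technical obstacle I expect is the interaction between constrained and free coordinates in the Jacobian: the implicit functions $y_{i_j}$ appear both in $|J|$ and in the factors $V_{i_j}(y_{i_j})$, and in the unbounded cases they may drift to the boundary of $D_{i_j}$ as the free coordinates do. Since only finitely many indices $j\le i_k$ are involved and each $V_j$ is continuous on $\overline{D_j}$, their contribution is controlled on any compact range of the free coordinates, and the ultimate convergence of the tail is then driven by the exponential decay of $V_j$ for $j>i_k$ supplied by~\eqref{int_conditions}.
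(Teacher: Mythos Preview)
Your outline is on the right track and matches the paper's proof in its skeleton: change to free canonical coordinates via Lemma~\ref{lemma_Jacobian}, observe that $E=[0,1]$ is trivial, and in the unbounded cases use~\eqref{int_conditions} to beat the polynomial Jacobian. However, your resolution of the ``technical obstacle'' has a genuine gap.

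After the change of variables the integrand contains, for every constrained index $i_j$, both the density factor $\exp(-nV_{i_j}(y_{i_j}))$ and a Jacobian factor of the type $y_{i_j}^{\,n-i_j-d_{i_j,\C}}$ (say for $E=\R_+$). The constrained coordinate $y_{i_j}$ is an implicit function of the free coordinates with index $<i_j$, and those free coordinates themselves range over an unbounded domain. Your claim that the $j\le i_k$ contributions are ``controlled on any compact range of the free coordinates'' is true but insufficient: the free coordinates with index $\le i_k$ are integrated over all of $D_j$, independently of the tail coordinates $j>i_k$, so the decay of $V_j$ for $j>i_k$ cannot help you integrate the first $i_k-k$ free variables. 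You still owe an argument that $\int\prod_{j\le i_k}\exp(-nV_j(y_j))\,y_j^{\,n-j-d_{j,\C}}\,d\mb y_{i_k}^{\mc C}<\infty$, with the constrained $y_{i_j}$ moving with the free ones.

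The paper closes this gap differently from what you sketch. It does not try to track the implicit functions at all; instead it uses the growth condition~\eqref{int_conditions} on $V_{i_j}$ itself to bound the \emph{combined} contribution $\exp\big(-n[V_{i_j}(y_{i_j})-\tfrac{n-i_j-d_{i_j,\C}}{n}\log y_{i_j}]\big)$ uniformly in $y_{i_j}$ (splitting into $y_{i_j}\le 1$ via continuity and $y_{i_j}>1$ via the logarithmic lower bound). Once the constrained coordinates are bounded out by a constant, the remaining integral factorises over the free coordinates, and each one-dimensional factor is finite by~\eqref{int_conditions} applied to the corresponding $V_j$. That uniform bound is the missing idea in your last paragraph.
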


For the rest of this section, we will assume that $(m_1^{(n)},\dots,m_n^{(n)})$ has distribution $\mathbb P^{\mc C}_{n,E,V}$. Our first result in this setting is a large deviations principle which holds without any further assumptions. 
Define the functions
$W_j: D_j \to \mathbb{R}, j=1,\dots,n$ by 
        \begin{align}\label{def_W}
            W_j(y_j) := 
            \begin{cases}
                V_j(y_j) - \log(y_j(1 - y_j))    &, E = [0, 1], \\
                V_j(y_j) - \log(y_j)             &, E = [0, \infty), \\
                V_j(y_j)                        &, E = \mathbb{R}, j \text{ odd}, \\
                V_j(y_j) - \log(y_j)           &, E = \mathbb{R}, j \text{ even}.
            \end{cases}
        \end{align}

    \begin{thm}[Large Deviations Principle]\label{general_dist_ldp}
        The vector $(m_1^{(n)},\dots,m_l^{(n)})$ satisfies a large deviations principle with speed $n$ and good rate function $I_l^{\mathcal{C}, E}( \, \cdot \,) - \inf \limits_{\m_l\in\mc M_l^{\mc C}(E)} I_l^{\mathcal{C}, E}(\m_l)$, where
        \begin{align}\label{def_I}
            I_l^{\mathcal{C}, E}(m_1, \ldots, m_l) :=
            \begin{cases}
                \sum \limits_{j = 1}^l W_j(y_j) &, (m_1, \ldots, m_l) \in \textup{Int} \mathcal{M}_l^{\mc C}(E) \\
                \infty &, \text{else}
            \end{cases}.
        \end{align}
    \end{thm}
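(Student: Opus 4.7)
The plan is to work in canonical coordinates, where the density nearly factorizes, and then to transfer the resulting LDP back to the moment vector via the contraction principle. First, using the bijection $\varphi_n^E$ from \eqref{def_can_coord} together with Lemma~\ref{lemma_Jacobian}, I would rewrite $\mathbb P^{\mc C}_{n,E,V}$ in terms of the unconstrained canonical coordinates $(y_j)_{j\notin\{i_1,\dots,i_k\}}$; the constrained coordinates $y_{i_1},\dots,y_{i_k}$ become smooth implicit functions of the preceding unconstrained ones via the relations $m_{i_j}(y_1,\dots,y_{i_j})=c_{i_j}$. The Jacobian contributes factors of the form $g_j(y_j)^{n-j}$ for each $j$ (with $g_j(y_j)=y_j(1-y_j)$ for $E=[0,1]$, $g_j(y_j)=y_j$ for $E=[0,\infty)$, and analogous choices for $E=\R$), so that the joint density of the unconstrained canonical coordinates takes the form $\exp\big(-n\sum_{j=1}^n V_j(y_j)+\sum_{j=1}^n(n-j)\log g_j(y_j)\big)$ times a continuous implicit-function Jacobian arising from the constraint submanifold.

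Next I would marginalize to the first $l$ canonical coordinates. Assuming first $l\ge i_k$, so that the constraint lies entirely within these coordinates, integration over the free coordinates $y_{l+1},\dots,y_n$ produces a product of one-dimensional partition integrals $\int_{D_j}\exp(-nV_j(y_j))g_j(y_j)^{n-j}dy_j$, which are finite by \eqref{int_conditions} and, crucially, independent of $(y_1,\dots,y_l)$. These tail factors cancel against $Z^{\mc C}_{n,E,V}$, yielding a marginal density of the form $\exp\big(-n\sum_{j=1}^l W_j(y_j)+O(1)\big)$ as $n\to\infty$, uniformly on compacts, with $W_j$ as in \eqref{def_W}. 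A standard Laplace/Varadhan argument then produces an LDP for $(y_1,\dots,y_l)$ with speed $n$ and rate function $\sum_{j=1}^l W_j(y_j)-\inf\sum_{j=1}^l W_j$, and the contraction principle, applied through the continuous bijection between $(y_1,\dots,y_l)$ and $(m_1,\dots,m_l)$, transports the LDP to the moment vector with the stated rate function. The case $l<i_k$ follows from the case $l=i_k$ by a further application of the contraction principle to the continuous projection onto the first $l$ coordinates.

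Goodness of $I_l^{\mc C,E}$ is verified by noting that $W_j\to+\infty$ at the boundary of $D_j$: for $E=[0,1]$ this is due to the $-\log(y_j(1-y_j))$ term blowing up at $\{0,1\}$, and for $E=\R_+,\R$ the integrability conditions \eqref{int_conditions} (together with $-\log y_j$ where applicable) force $W_j\to\infty$ at infinity, so level sets of $\sum_{j=1}^l W_j$ are compact. The main technical obstacle is the precise handling of the tail integrals in the unbounded settings $E=\R_+,\R$, where \eqref{int_conditions} is needed to ensure the partition integrals converge, to control the normalization $Z^{\mc C}_{n,E,V}$, and to obtain uniform Laplace asymptotics across compacts. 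A secondary subtlety is verifying that the implicit Jacobian from the constraint submanifold is continuous and bounded away from $0$ and $\infty$ on compact subsets of the unconstrained canonical coordinates, so that it contributes only a subexponential factor and does not distort the rate function.
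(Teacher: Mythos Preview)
Your proposal is correct and follows essentially the same route as the paper: pass to unconstrained canonical coordinates via Corollary~\ref{corollary_density}, observe that the resulting marginal density of $\mb y_l^{\C,(n)}$ has the form $Z_n^{-1}e^{-nW}R$ with $W=\sum_{j=1}^l W_j(y_j)$ and a subexponential remainder $R$, derive the LDP for the canonical coordinates (the paper isolates this step as the general Proposition~\ref{ldp} together with Lemma~\ref{exact_ldp}, which is precisely your ``standard Laplace/Varadhan argument''), and then apply the contraction principle back to moments. The only minor imprecision is that the Jacobian exponent in Lemma~\ref{lemma_Jacobian} is $n-j-d_{j,\C}$ rather than $n-j$, and no separate implicit-function Jacobian appears in the paper's parametrization; but since $d_{j,\C}\le k$ is bounded, this correction is absorbed into the subexponential remainder and your argument goes through unchanged.
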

    
Usually a large deviations principle implies a law of large numbers, provided that the rate function has a unique minimizer. In the unconstrained case, all canonical coordinates are independent and uniqueness of the minimizer of the rate function reduces to uniqueness of the minimizers of $W
_1$ and $W_2$. Hence uniqueness of a minimizing moment sequence is actually a univariate problem. In contrast to that, as a consequence of the constraint $\mc C$, in general the first $i_k$ canonical coordinates are strongly dependent and thus uniqueness of a minimizer is in general a truly multivariate problem, involving simultaneously $W_1,\dots, W_{i_k+2}$ and $\mc C$. Let us illustrate this with an example for $\mc M_n^{\mc C}([0,1])$.
\begin{example}
    The constraint $\mc C=\{m_1=c\}$ for some $c\in(0,1)$ does not induce any dependencies, since $m_1=p_1$. The simplest possible, yet instructive, constraint is $\mc C=\{m_2=c\}$ for $c\in(0,1)$. From \eqref{def_p_j} we deduce $p_1=m_1$ and $p_2=\frac{c-p_1^2}{p_1(1-p_1)}$. Thus, changing coordinates to $p_1,p_3,p_4,\dots,p_n$, the density $P_{n,[0,1],V}^{\mc C}$ can using a computation similar to \eqref{Skibinsky} be expressed as
    \begin{align}
    \frac{1}{Z^{\mc C}_{n,E,V}} &\exp\lr{-n \lb W_1(p_1)+W_2\lb\frac{c-p_1^2}{p_1(1-p_1)}\rb\rb}\rr1_{[c,\sqrt c]}(p_1)\label{example_eq1}\\
    &\times\exp\lr -2\log (p_1(1-p_1))-2\log(cp_1+cp_1^2-c^2-p_1^3)+4\log(p_1(1-p_1))\rr\label{example_eq2}\\
    &\times\exp\lb{-n \sum \limits_{j = 3}^n W_j(p_j)-j\log((p_j)(1-p_j))}\rb.\label{example_eq3}
    \end{align}
    The indicator function in \eqref{example_eq1} stems from the fact that $m_2=c$ implies $m_1=p_1\in[c,\sqrt{c}]$. We will see a general version of \eqref{example_eq1}-\eqref{example_eq3} in Corollary \ref{corollary_density} below.

All terms in \eqref{example_eq2} are sub-leading and do not contribute much to the uniqueness problem. \eqref{example_eq3} factorizes and thus uniqueness of minimization over $p_3,p_4,\dots$ reduces to $W_3$ and $W_4$ separately. However, equation \eqref{example_eq1} shows that the uniqueness problem for $p_1$ involves $W_1$, $W_2$ and $\mc C$. Note that in this simple example minimization over $p_1$ is still one-dimensional, albeit non-trivial. The constraint $\mc C=\{m_3=c\}$ would yield dependent $p_1,p_2$, and $p_3$ could be written as a rational function of $p_1$ and $p_2$. Note also that specifying to $V_1=\dots=V_4=0$, i.e.~to the uniform distribution considered in Section \ref{sec_uniform}, does not simplify things much.
\end{example}
From the previous example it is far from obvious why the uniform distribution on $\mc M_n^{\mc C}([0,1])$ concentrates on a unique moment sequence as $n\to\infty$. This uniqueness will for general $V_1,\dots,V_{i_k+2}$ no longer be true. Fortunately, we can prove equally strong results without requiring uniqueness of the multivariate minimization problem. The assumptions on $V_1,\dots,V_{i_k+2}$ used lateron are formulated in the definition below. As a preparation, let
\begin{align}
\mb y_n^{\mc C}:=(y_j,1\leq j\leq n,j\not=i_1,\dots,i_k).\label{parametrization0}
\end{align}
be the vector of canonical coordinates corresponding to $\m_n^{\mc C}$. It is shown in Lemma~\ref{lemma_Jacobian} of Section \ref{sec_limit} that $\mb y_n^{\mc C}$ allows for a parametrization of $\mc M_n^{\mc C}(E)$. At this stage, it suffices to note that knowing $\mb y_n^{\mc C}$ determines $\m_n$ because the ``missing coordinates'' $y_{i_j},j=1,\dots,k$ are determined by $\mb y_n^{\mc C}$ and the constraint $\mc C$.

\begin{defn}[Generic $V_1,\dots,V_{i_k+2}$]\label{def_generic}
Let $V_j\in C^2(D_j)$, $j=1,\dots,i_k+2$ satisfy \eqref{int_conditions}, if $E=\R_+$ or $E=\R$. We call $V_1,\dots,V_{i_k+2}$ \textit{generic}, if the following three conditions are satisfied:
\begin{itemize}
	\item(unique univariate minimizers) $W_{i_k+1}$ and $W_{i_k+2}$ have unique minimizers $y_{i_k+1}^*\in D_{i_k+1}$ and $y_{i_k+2}^*\in D_{i_k+2}$, respectively,
	\item(finitely many multivariate minimizers) $\mb y_{i_k}^{\mc C}\mapsto I_{i_k}^{\C,E}(\m_{i_k}(\mb y_{i_k}^{\mc C}))$ has finitely many minimizers
        \begin{align*}
            \mathbf{y}^{*,1}, \ldots, \mathbf{y}^{*,p}\in\prod\limits_{\substack{j=1\\j\not=i_1,\dots,i_k}}^{i_k}D_j\,,
        \end{align*}
        where the set $D_j$ has been defined in \eqref{def_D_j} and $I_{i_k}^{\C,E}$ in \eqref{def_I}.
    
	\item(non-degeneracy of minimizers) $W_j''(y_j^*)\not=0$, $j=i_k+1,i_k+2$ and 
        \begin{align*}
            \det\text{Hess}^{\mc C}\lb\sum_{j=1}^{i_k}W_j\rb(\mb y^{*,q})\not=0,\ q=1,\dots,p.
        \end{align*}
        Here, $\operatorname{Hess}^{\mc C}$ is the $(i_k-k)\times (i_k-k)$ dimensional Hessian matrix with respect to the variables in $\mathbf{y}_{i_k}^{\mc C}$.
\end{itemize}
\end{defn}

\begin{rem}
The set of $\mb y_{i_k}^\C$ such that $I_{i_k}^{\C,E}(\m_{i_k}(\mb y_{i_k}^{\mc C}))\not=\infty$, i.e.~the set of potential minimizers, is an open set on which $y_{i_1},\dots,y_{i_k}$ are smooth functions of $\mb y_{i_k}^{\mc C}$. On this set we have $I_{i_k}^{\C,E}(\m_{i_k}(\mb y_{i_k}^{\mc C}))=\sum_{j=1}^{i_k}W_j(y_j)$.
It is known from Morse theory that almost all $C^2$ functions have only non-degenerate critical points, meaning that the Hessian determinant does not vanish at these points. Furthermore, there can be only finitely many such points in any compact set. From a more practical point of view, if $V_1,\dots,V_{i_k+2}$ are not generic, then the perturbation $\tilde{V}_1,\dots,\tilde{V}_{i_k+2}$ with $\tilde{V}_j(t):=V_j(t)+a_jt, j=1,\dots,i_{k+2}$ is generic for almost all values $a_1,\dots,a_{i_k+2}\in\R$ \cite[Theorem 2.20]{Matsumoto}. In the case of unbounded $E$ and thus unbounded $D_j$, the search for minimizers can effectively be restricted to a certain compact set thanks to the integrability conditions \eqref{int_conditions}. This justifies calling $V_1,\dots,V_{i_k+2}$ with the properties of Definition \ref{def_generic} generic.

Of course, almost all $C^2$ functions will also have a unique multivariate minimizer. Nevertheless we find it useful to consider the more general case of several multivariate minimizers as it features a particular aspect of the universality phenomenon (see Theorem \ref{general_lln} and Remark \ref{remark_theorem_lln} below).
\end{rem}
We can now formulate the main results of this section. The first one is an analog of Theorem \ref{uniform_lln} in the general setting. It is instructive to briefly review one of the main results of the unconstrained case: \cite[Theorems 2.1,~2.5 and 2.7]{DTV} show that the first $l$ random moments from $\mathbb{P}_{n, E, V}^{\C}$ with $\C = \emptyset$ and generic $V_1,\,V_2$ with minimizers $y_1^*,\,y_2^*$ converge to the first $l$ non-random moments of the probability measure $\mu_{y_1^*,y_2^*}$, where
\begin{align}\label{mu_E}
\mu_{y_1^*,y_2^*}(dx):=\begin{cases}\medskip
\lb 1-\frac{p_1^*}{p_2^*}\rb_+\d_0+\lb\frac{p_1^*+p_2^*-1}{p_2^*}\rb_+\d_1+\frac{\sqrt{(x-a)(b-x)}}{2\pi p_2^*x(1-x)}1_{[a,b]}(x)dx&,\quad \text{ if }E=[0,1],\\\medskip
\lb 1-\frac{z_1^*}{z_2^*}\rb_+\d_0+\frac{1}{2\pi z_2^*}\frac{\sqrt{(x-a)(b-x)}}{x}1_{[a,b]}(x)dx&,\quad \text{ if }E=\R_+,\\
\frac{1}{2\pi\b^*}\sqrt{(x-a)(b-x)}1_{[a,b]}(x)dx&,\quad\text{ if }E=\R.
\end{cases}
\end{align}
Here, $(\,\cdot\,)_+:=\max(\,\cdot\,,0)$ and $a$ and $b$ are constants depending on $y_1^*, y_2^*$ and $E$ only. To ease notation, we will with a slight abuse of notation not index the measure with $E$ and instead trust that the reader will distinguish the three cases $E=[0,1],\,\R_+,\,\R$ from the appearance of $p$'s, $z$'s or $\alpha,\,\beta$, respectively. The measure $\mu_{p_1^*, p_2^*}$ is called Kesten-McKay measure or free binomial distribution as it is a free convolution power of the Bernoulli distribution. We refer to \cite[p.~4f]{DTV} for details. The measures $\mu_{z_1^*, z_2^*}$ and $\mu_{\alpha^*, \beta^*}$ are the Marchenko-Pastur and semicircle distributions, respectively, which are well-known in random matrix theory and free probability. The universality of the three measures is apparent as very different functions $V_1,\,V_2$ lead to the same family of measures.\\

Let us now formulate an analogous result in the presence of a constraint $\mc C$ that identifies the limit measures as members of the Bernstein-Szeg\H o class plus some discrete measure. To state it, define
\begin{align}\label{def_y_12}
y_1^*:=\begin{cases} y^*_{i_k+1},&\quad\text{ if }i_k\text{ is even},\\y^*_{i_k+2},&\quad\text{ if }i_k\text{ is odd},\end{cases}\qquad y_2^*:=\begin{cases} y^*_{i_k+2},&\quad\text{ if }i_k\text{ is even},\\y^*_{i_k+1},&\quad\text{ if }i_k\text{ is odd}.\end{cases}
\end{align} 

    \begin{thm}[Law of Large Numbers]\label{general_lln}
       Let $V_1,\dots,V_{i_k+2}$ be generic. Then we have as $n\to\infty$
        \begin{align*}
            (m_1^{(n)}, \ldots, m_l^{(n)}) \to  \sum \limits_{q= 1}^p w_q \delta_{\mathbf{m}_{l}^{*, q}}
        \end{align*}
				in distribution,
        where $w_1,\dots,w_p>0,\,\sum_{q=1}^p w_q=1$ are weights and  $\mathbf{m}^{*, q}_{l} :=(m_{1}^{*, q},\dots,m_{l}^{*, q})$ is the vector of the first $l$ moments of a probability measure
        \begin{align*}
            \mu_q(dx)=\mu_q^{ac}(x)dx+\mu_q^d(dx)\,.
        \end{align*}
        The density of the absolutely continuous part is given by 
        \begin{align*}
        	\mu_q^{ac}(x)=\frac{1}{D_q(x)}\mu_{y_1^*,y_2^*}^{ac}(x)\,,
        \end{align*}
        	where $\mu_{y_1^*,y_2^*}^{ac}$ is the density of the absolutely continuous part of the measure $\mu_{y_1^*,y_2^*}$ defined in~\eqref{mu_E} and $D_q$ is a polynomial of degree at most $i_k$, strictly positive on the support of $\mu_{y_1^*,y_2^*}^{ac}$. The measure $\mu_q^d$ is discrete having atoms at the positions of atoms of $\mu_{y_1^*,y_2^*}$ and at most $i_k$ extra atoms.
    \end{thm}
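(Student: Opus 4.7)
The plan is to combine the LDP of Theorem \ref{general_dist_ldp} with a Laplace-type refinement and classical orthogonal polynomial theory. I proceed in three steps.

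First, I determine the minimizer set of $I_l^{\mathcal{C},E}$ for $l \geq i_k$; the case $l < i_k$ follows by the contraction principle. In the free coordinates $\mathbf{y}_l^{\mathcal{C}}$ of \eqref{parametrization0}, the rate function on $\inn \mathcal{M}_l^{\mathcal{C}}(E)$ splits as
\[
I_l^{\mathcal{C},E} = \sum_{j=1}^{i_k} W_j\bigl(y_j(\mathbf{y}_{i_k}^{\mathcal{C}})\bigr) + \sum_{j=i_k+1}^{l} W_j(y_j).
\]
The second sum consists of univariate functions in independent variables, and by the periodic convention on $V_j$ for $j>i_k+2$, each $W_j$ is uniquely minimized at $y_1^*$ or $y_2^*$ according to \eqref{def_y_12}. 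Genericity of $V_1,\dots,V_{i_k+2}$ (Definition \ref{def_generic}) gives that the first sum has exactly $p$ minimizers $\mathbf{y}^{*,1},\dots,\mathbf{y}^{*,p}$, all with non-degenerate Hessian. Hence $I_l^{\mathcal{C},E}$ has exactly $p$ global minimizers, which yield via \eqref{def_can_coord} the moment vectors $\mathbf{m}_l^{*,q}$.

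Second, to obtain the weights, I apply a Laplace-type expansion to the density \eqref{def_P} expressed in canonical coordinates (the relevant Jacobian being recorded in Lemma~\ref{lemma_Jacobian}/Corollary~\ref{corollary_density}). The LDP upper bound, together with the integrability conditions \eqref{int_conditions} in the unbounded cases, makes the mass outside a small neighborhood of the minimizer set exponentially negligible. Expanding $\sum_{j=1}^{i_k} W_j$ to second order around each $\mathbf{y}^{*,q}$ and integrating the resulting Gaussian, the mass of $\mathbb{P}_{n,E,V}^{\mathcal{C}}$ in a neighborhood of the $q$-th minimizer is asymptotically
\[
\frac{(2\pi/n)^{(i_k-k)/2}}{\sqrt{\det \Hess^{\mathcal{C}}\bigl(\sum_{j=1}^{i_k} W_j\bigr)(\mathbf{y}^{*,q})}} \cdot K_n,
\]
where $K_n$ is a prefactor from the tail coordinates $y_j$, $j>i_k$, whose Laplace expansion around $y_1^*$ or $y_2^*$ does not depend on $q$. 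Dividing by $Z_{n,E,V}^{\mathcal{C}}$, which is asymptotically the sum of these contributions, yields positive weights
\[
w_q = \frac{\bigl(\det \Hess^{\mathcal{C}}\sum_{j=1}^{i_k}W_j(\mathbf{y}^{*,q})\bigr)^{-1/2}}{\sum_{q'=1}^{p}\bigl(\det \Hess^{\mathcal{C}}\sum_{j=1}^{i_k}W_j(\mathbf{y}^{*,q'})\bigr)^{-1/2}}
\]
summing to $1$. Concentration of the tail coordinates at $y_1^*,y_2^*$ then produces the claimed mixture of Dirac masses.

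Third, I identify the location $\mathbf{m}_l^{*,q}$ as the first $l$ moments of a probability measure $\mu_q$ whose canonical coordinates, once the constrained $y_{i_j}$ are recovered via \eqref{def_can_coord}, agree from position $i_k+1$ onward with those of $\mu_{y_1^*,y_2^*}$ from \eqref{mu_E} (Kesten--McKay, Marchenko--Pastur, or semicircle according to $E$). In orthogonal polynomial language (bounded case) or Jacobi matrix language (unbounded cases), $\mu_q$ thus differs from $\mu_{y_1^*,y_2^*}$ only through a finite-rank perturbation of the recurrence coefficients. Classical Bernstein--Szeg\H{o} theory (\cite[\S 2.6]{Szego} in the bounded case, together with spectral theory of Jacobi matrices with compactly supported perturbations in the unbounded one) then yields
\[
\mu_q(dx) = \frac{\mu_{y_1^*,y_2^*}^{ac}(x)}{D_q(x)}\,dx + \mu_q^d(dx),
\]
with $D_q$ a polynomial of degree at most $i_k$ strictly positive on $\supp(\mu_{y_1^*,y_2^*}^{ac})$ and $\mu_q^d$ a discrete measure whose atoms are those of $\mu_{y_1^*,y_2^*}$ together with at most $i_k$ further atoms, produced by zeros of $D_q$ outside that support.

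The main technical obstacle is the Laplace refinement in step two: unlike the unconstrained case, where all canonical coordinates are independent and a product argument suffices, the constraint couples the first $i_k$ canonical coordinates non-linearly via the implicit dependence $y_{i_j} = y_{i_j}(\mathbf{y}_{i_k}^{\mathcal{C}})$, so one must control both the Jacobian of the parametrization of $\mathcal{M}_n^{\mathcal{C}}(E)$ and the Morse normal form near each $\mathbf{y}^{*,q}$, and justify uniformly the interchange of limit and integration away from the minimizer set, which in the unbounded cases relies crucially on \eqref{int_conditions}.
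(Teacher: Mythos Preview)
Your overall strategy matches the paper's: reduce to canonical coordinates, apply a Laplace-type argument to get convergence to a finite mixture, then identify each limit point as a finite-rank perturbation of $\mu_{y_1^*,y_2^*}$. Two concrete issues need fixing.

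\medskip
\textbf{The weight formula.} Your expression for $w_q$ is incomplete. In the density of Corollary~\ref{corollary_density} the subleading factor \eqref{density_remainder},
\[
\exp\Big(\sum_{j=1}^n (W_j(y_j)-V_j(y_j))(j+d_{j,\C})\Big),
\]
plays the role of $R$ in the Laplace asymptotics (Proposition~\ref{conv_discrete}). For $j\le i_k$ this factor depends on the values $y_j(\mathbf{y}^{*,q})$, which differ across $q$; it is not absorbed into the $q$-independent tail factor $K_n$ you describe. Consequently the correct weights are proportional to $R(\mathbf{y}^{*,q})\big(\det\Hess^{\mathcal C}\sum_{j=1}^{i_k}W_j(\mathbf{y}^{*,q})\big)^{-1/2}$, not just the Hessian factor. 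For the theorem as stated (which only asserts $w_q>0$, $\sum w_q=1$) this is not fatal, but the formula you wrote is wrong.

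\medskip
\textbf{The degree and atom bounds.} Your appeal to ``classical Bernstein--Szeg\H{o} theory'' and ``spectral theory of Jacobi matrices with compactly supported perturbations'' does not deliver the specific conclusion that $\deg D_q\le i_k$ with at most $i_k$ extra atoms. What such theory gives directly (this is the content of Proposition~\ref{measure_representation} in the paper, proved via the continued-fraction expansion of the Stieltjes transform) is a polynomial $D$ of degree up to $2j-1$, where $j$ is the index past which the recurrence coefficients $(\alpha_l,\beta_{l-1})$ stabilize. For $E=\mathbb R_+$ and $E=[0,1]$ the relation between canonical coordinates and recurrence coefficients (equations \eqref{canonical_recursion} and the analogous $\mathbb R_+$ formula) forces $j$ larger than $i_k/2$, so $D$ a priori has degree $i_k+1$ or $i_k+2$. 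The paper then computes $D(0)=0$ (using $z_{2l-1}=z_{2l+1}$) for $E=\mathbb R_+$, and $D(0)=D(1)=0$ for $E=[0,1]$, to factor out $x$ or $x(1-x)$ and bring the degree down to $i_k$. Similarly, the bound of $i_k$ extra atoms is not automatic: one has to evaluate the function $f$ of \eqref{func_f} at $0$ (and at $1$) and check case by case whether the root of $D$ there carries mass, matching the presence or absence of the atom of $\mu_{y_1^*,y_2^*}$. None of this is covered by a black-box citation; it is the heart of the proof.
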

		\begin{rem}\label{remark_theorem_lln}
		Theorem \ref{general_lln} shows how strong the universality of \eqref{mu_E} is: The limiting measure needs to fulfil the constraint $\mc C$ and, apart from atoms, the optimal measure from $\mathcal P^{\mc C}(E)$ is absolutely continuous w.r.t.~$\mu_{y_1^*,y_2^*}$. In particular, the support of the absolutely continuous part of $\mu_q$, $q=1,\dots,p$ is always the one of $\mu_{y_1^*,y_2^*}$, regardless of the constraint $\mc C$! Moreover, as nicely featured in the case of several minimizers, this universality is not even broken if there are several limiting measures.
    \end{rem}
    \begin{example}\label{example_general}
		The proof of Theorem \ref{general_lln} is constructive: A recipe to determine the weights $w_q$, polynomials $D_q$ and measures $\mu_q^d$ is given in Proposition~\ref{conv_discrete} and Poposition~\ref{measure_representation}. As an example we consider the case $E = \mathbb{R}$ with the constraint $\C := \{m_1 = 0\}$, i.e.~we only consider measures with mean zero. We take the functions $V_1(\alpha) := (\alpha - 1)^2$ and $V_2(\beta) := 8\beta^2$. Then $W_1(\alpha) = V_1(\alpha)$ has a unique minimizer in $\alpha^* = 1$ and $W_2(\beta) = V_2(\beta) - \log(\beta)$ in $\beta^*=\frac{1}{4}$. The limiting measure is 
  \begin{align}
            \mu_{\a^*,\b^*}(dx) = \frac{\sqrt{x(2-x)}}{2\pi\Big(x + \frac{1}{4}\Big)}1_{[0, 2]}(x) \, dx + \frac{3}{4} \delta_{-\frac{1}{4}}\,.\label{example_measure}
  \end{align}
  The computation of this measure will be performed in Example \ref{example_continued} following Proposition~\ref{measure_representation}, where the necessary preliminaries are given.     
    \end{example}

Our next result concerns Gaussian fluctuations. A remarkable fact is that in the case of several minimizers, the standardization is itself random.
    
    \begin{thm}[Central Limit Theorem]\label{general_dist_clt}
Let $V_1,\dots,V_{i_k+2}$ be generic.  Define
        \begin{align*}
            \mathbf{m}_l^* := \argmin \limits_{\mb m^* \in \{\mathbf{m}^{*, 1}_{l}, \ldots, \mathbf{m}^{*, p}_{l}\}}\|(m_1^{(n)}, \ldots, m_l^{(n)}) - \mb m^*\|,
        \end{align*}
        where $\mb m^{*, q}_{l},\,q=1,\dots,p$ have been introduced in Theorem \ref{general_lln} and $\|\cdot\|$ denotes the Euclidean norm.
        Then, as $n\to\infty$
        \begin{align}
            Z_n:=\sqrt{n}\Sigma_l(\mathbf{m}_l^*)((m_1^{(n)}, \ldots, m_l^{(n)}) - \mathbf{m}_l^*) \to \mathcal{N}(0, L)\label{zn}
        \end{align}
        in distribution, where $L\in\R^{l\times l}$ is the matrix with $L_{uv} =1$ if $u=v$ and $v \notin \{i_1, \ldots, i_k\}$ and $L_{uv} =0$ else. The matrix $\Sigma_l \in \mathbb{R}^{l \times l}$ is given by

        \begin{align*}
            \Sigma_l(\mb m_l) &:= T^{\mc C}\bigg[\Big(\operatorname{Hess}^{\mc C} \sum_{j=1}^{l}W_j\Big)^{1/2}(\mathbf{y}^{\mc C}_l)\left(\frac{\partial \mb y_l^{\mc C}}{\partial \mb m_l^{\mc C}}(\m_l)\right)^t\bigg]\,, 
        \end{align*}
        where $T^{\mc C}:\R^{(l-k)\times(l-k)}\to\R^{l\times l}$ denotes insertion of rows and columns of zeros at the positions $i_1,\dots,i_k$, $\operatorname{Hess}^{\mc C}$ denotes the Hessian with respect to the coordinates in $\mathbf{y}_l^{\mc C}$ and the variables $\mathbf{m}_l^{\mc C}$, $\mathbf{y}_l^{\mc C}$ are defined in~\eqref{m_n^C} and~\eqref{parametrization0}, respectively.
		
    \end{thm}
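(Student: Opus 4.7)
The plan is to transform to canonical coordinates, where the density of $\mathbb{P}^{\mc C}_{n, E, V}$ has a near-product structure, and then perform a local Laplace-type analysis around each of the finitely many minimizers $\mathbf{y}^{*, 1}, \dots, \mathbf{y}^{*, p}$. First I would compute the joint density of the first $l$ canonical coordinates $\mathbf{y}_l^{\mc C}$ using the Jacobian formulas from Lemma~\ref{lemma_Jacobian}. As in Skibinsky's identity~\eqref{Skibinsky}, the Jacobian contributes exactly the logarithmic corrections that convert $V_j$ into $W_j$, so the marginal density, obtained by integrating out $y_{l+1},\dots,y_n$, is to leading order proportional to
\[
\exp\!\left(-n \sum_{j=1}^l W_j\bigl(y_j(\mathbf{y}_l^{\mc C})\bigr)\right),
\]
where $y_{i_1},\dots,y_{i_k}$ are determined smoothly and implicitly from $\mathbf{y}_l^{\mc C}$ by the admissibility of the constraint. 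Theorem~\ref{general_dist_ldp} together with Theorem~\ref{general_lln} ensures that this mass concentrates on arbitrarily small neighborhoods of the minimizers with asymptotic weights $w_q$, so it suffices to prove the CLT conditionally on the event that $\mathbf{m}_l^*$ equals $\mathbf{m}_l^{*, q}$.

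Next I would Taylor-expand $\sum_{j=1}^l W_j$ around $\mathbf{y}^{*, q}$ after the rescaling $\mathbf{y}_l^{\mc C} = \mathbf{y}^{*, q, \mc C} + \mathbf{u}/\sqrt{n}$. The linear term vanishes by the minimizer property; the quadratic term equals $\tfrac12 \mathbf{u}^t H_q \mathbf{u}$ with $H_q := \operatorname{Hess}^{\mc C}\bigl(\sum_{j=1}^l W_j\bigr)(\mathbf{y}^{*, q, \mc C})$, which is block-diagonal (an upper $(i_k - k) \times (i_k - k)$ block for the coupled coordinates and a lower diagonal block with entries $W_j''(y_j^*)$ for $j > i_k$) and positive definite by the non-degeneracy assumption in Definition~\ref{def_generic}; higher-order terms are $O(n^{-1/2})$. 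Pointwise convergence of the rescaled density to the Gaussian density proportional to $\exp(-\tfrac12 \mathbf{u}^t H_q \mathbf{u})$, together with a uniform tail estimate inherited from the LDP, yields the conditional convergence
\[
\sqrt{n}\bigl(\mathbf{y}_l^{\mc C} - \mathbf{y}^{*, q, \mc C}\bigr) \Longrightarrow \mathcal{N}(0, H_q^{-1}).
\]

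Applying the delta method with the invertible Jacobian $J_q := \partial \mathbf{m}_l^{\mc C}/\partial \mathbf{y}_l^{\mc C}$ gives $\sqrt{n}(\mathbf{m}_l^{\mc C} - \mathbf{m}_l^{*, q, \mc C}) \Rightarrow \mathcal{N}(0, J_q H_q^{-1} J_q^t)$, while the constrained components $m_{i_j}^{(n)} - m_{i_j}^{*, q}$ vanish identically, matching the zero rows and columns inserted by the embedding $T^{\mc C}$. A direct matrix computation shows that the defining expression for $\Sigma_l(\mathbf{m}_l^{*, q})$ is precisely the whitening factor that turns this covariance into $L$. Continuity of $\Sigma_l$ in $\mathbf{m}_l$ and Slutsky's theorem then give the conditional convergence $Z_n \Rightarrow \mathcal{N}(0, L)$ on each localization event; since the common limit is independent of $q$ and the weights $w_q$ sum to $1$, the unconditional convergence~\eqref{zn} follows.

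The principal technical obstacle lies in the Laplace analysis: the coupling among the first $i_k$ canonical coordinates induced by the constraint prevents a simple tensor-product argument, so the smooth implicit parametrization of $y_{i_1}, \dots, y_{i_k}$ by $\mathbf{y}_{i_k}^{\mc C}$ must be carried carefully through the Taylor expansion, and one must verify that $\operatorname{Hess}^{\mc C}$ inherits positive-definiteness in the reduced variables (which is exactly the content of the non-degeneracy hypothesis in Definition~\ref{def_generic}). The marginalization over $y_{l+1}, \dots, y_n$ is unproblematic, as the corresponding $W_j$ are independent of $\mathbf{y}_l^{\mc C}$ and the resulting factor cancels against $Z^{\mc C}_{n, E, V}$ when one passes to the conditional density.
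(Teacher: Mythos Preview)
Your proposal is correct and follows essentially the same route as the paper: change to canonical coordinates via Corollary~\ref{corollary_density}, perform a Laplace-type CLT analysis at each minimizer using the non-degeneracy from Definition~\ref{def_generic}, and then transfer back to ordinary moments by the delta method. The paper packages the Laplace step into a general Proposition~\ref{conv_normal} (convergence in total variation for densities of the form $e^{-nW}R$ with finitely many non-degenerate minimizers), whereas you sketch the localization and Taylor expansion directly; the content is the same. One small point the paper makes explicit that you glossed over: the random center $\mathbf{m}_l^*$ is defined as the closest minimizer in $m$-coordinates, while the Laplace analysis naturally produces the closest minimizer $\mathbf{y}_l^*$ in $y$-coordinates, and $\varphi_l^{E,\mc C}(\mathbf{y}_l^*)$ need not equal $\mathbf{m}_l^*$ in general; however, they agree on a neighborhood of the minimizers whose probability tends to one, so this is handled by a further Slutsky argument.
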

    
   We finish this section with a moderate deviations principle.

    \begin{thm}[Moderate Deviations Principle]\label{general_dist_mdp}
        Let $V_1,\dots,V_{i_k+2}$ be generic and $a_n \to \infty$ be a sequence with $a_n = o(\sqrt{n})$. Then the sequence $(Z_n)_n$ in~\eqref{zn} satisfies a large deviations principle with speed $\frac{n}{a_n^2}$ and good rate function
        \begin{align*}
            I(\mb m_l) := 
            \begin{cases}
                \frac{1}{2} \|\mb m_l\|_2^2 &, m_{i_1} = \ldots = m_{i_k} = 0 \\
                \infty &\text{,else}
            \end{cases}.
        \end{align*}
    \end{thm}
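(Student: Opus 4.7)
The plan is to localize near each of the finitely many minimizers using the full LDP of Theorem~\ref{general_dist_ldp}, perform a Laplace-type expansion in canonical coordinates, and push the resulting quadratic rate back to moment coordinates using the precise form of $\Sigma_l$ supplied by the CLT.

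First, Theorem~\ref{general_dist_ldp} implies that $(m_1^{(n)},\dots,m_l^{(n)})$ concentrates on the finite set $\{\mb m^{*,1},\dots,\mb m^{*,p}\}$ with speed $n$, which is strictly faster than the MDP speed $n/a_n^2$. Hence one may restrict the analysis to arbitrarily small neighborhoods of the individual minimizers, as any contribution from outside is $O(e^{-cn})$ for some $c>0$. On the event $\{\mb m_l^{*}=\mb m^{*,q}\}$, the random vector $Z_n$ reduces to $\sqrt n\,\Sigma_l(\mb m^{*,q})((m_1^{(n)},\dots,m_l^{(n)})-\mb m^{*,q})$, so it suffices to establish the MDP locally near each $\mb m^{*,q}$ and then combine the $p$ local statements. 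The mixing weights $w_q$ from Theorem~\ref{general_lln} contribute only $O(1)$ prefactors, invisible at speed $n/a_n^2$.

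Next, pass to the canonical coordinates $\mb y_l^{\mc C}$ via the diffeomorphism $\varphi_l^E$. The law of $\mb y_l^{\mc C}$ has a density proportional to $\exp(-n\sum_{j=1}^{l}W_j(y_j(\mb y_l^{\mc C})))$ times polynomial Jacobian factors (subexponential in $n$ and thus immaterial at the MDP scale), with $y_{i_1},\dots,y_{i_k}$ being the smooth functions of $\mb y_{i_k}^{\mc C}$ determined implicitly by $\mc C$. By genericity, $\mb y_l^{\mc C}\mapsto \sum_j W_j(y_j(\mb y_l^{\mc C}))$ has a non-degenerate minimum at $\mb y^{*,q,l,\mc C}$ with positive-definite block-diagonal Hessian $\widetilde H^{(q)}$, whose constrained block is $\operatorname{Hess}^{\mc C}\sum_{j=1}^{i_k}W_j(\mb y^{*,q})$ and whose remaining diagonal entries are $W_j''(y_j^{*})>0$. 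Setting $\xi_n:=\tfrac{\sqrt{n}}{a_n}(\mb y_l^{\mc C}-\mb y^{*,q,l,\mc C})$ and Taylor expanding yields
\begin{align*}
n\sum_{j=1}^{l}\bigl[W_j(y_j(\mb y_l^{\mc C}))-W_j(y_j^{*,q})\bigr]=\tfrac{a_n^2}{2}\,\xi_n^{t}\,\widetilde H^{(q)}\,\xi_n+O\bigl(a_n^3/\sqrt{n}\bigr),
\end{align*}
and the remainder is $o(a_n^2)$ since $a_n=o(\sqrt{n})$. A standard Laplace-type argument (Gaussian tail bounds for large $\|\xi_n\|$ together with uniform quadratic expansion on compacts) then gives an LDP for $\xi_n$ at speed $n/a_n^2$ with good rate function $\tfrac{1}{2}\xi^{t}\widetilde H^{(q)}\xi$.

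Finally, push this LDP forward through the linear map $\xi\mapsto \Sigma_l(\mb m^{*,q})\,\tfrac{\partial \mb m_l^{\mc C}}{\partial \mb y_l^{\mc C}}(\mb y^{*,q,l,\mc C})\,\xi$, which is exactly the linearization carrying $\xi_n$ to $Z_n$ on $\{\mb m_l^{*}=\mb m^{*,q}\}$. By the very definition of $\Sigma_l$ in Theorem~\ref{general_dist_clt}, which composes $(\operatorname{Hess}^{\mc C})^{1/2}$ with the inverse coordinate Jacobian and the zero-embedding $T^{\mc C}$, this pushforward quadratic form is exactly $\tfrac{1}{2}\|\mb m_l\|^2$ on the constraint hyperplane $\{m_{i_1}=\cdots=m_{i_k}=0\}$, and infinite elsewhere because constraint-violating directions lie outside the moment space. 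The rate function is the same at every minimizer, so a finite-partition argument merges the $p$ local MDPs into the global one. The main obstacle is the bookkeeping of the constrained coordinate change: the implicit differentiation of $y_{i_1},\dots,y_{i_k}$ with respect to $\mb y_{i_k}^{\mc C}$ and the algebraic identity showing that $\Sigma_l$ exactly normalizes $\widetilde H^{(q)}$ into the projection $L$ --- simultaneously at all $p$ minimizers --- require care, but once these identities are established, the contraction principle concludes.
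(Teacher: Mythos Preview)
Your overall strategy---localize near the minimizers via the full LDP, obtain a quadratic MDP in canonical coordinates by a Laplace expansion, and push it forward to moment coordinates---is the same as the paper's. The paper packages the second step as an abstract result (its Proposition~\ref{mdp}) and carries out the third step via the delta-method for large deviations of Gao--Zhao, followed by a conditioning argument on $\{\mb y_l^*=\mb y^{*,q}_l\}$ to merge the $p$ local MDPs; your ``by hand'' version is conceptually equivalent.

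However, your argument contains a genuine scaling error. With your choice $\xi_n:=\tfrac{\sqrt n}{a_n}(\mb y_l^{\mc C}-\mb y^{*,q,l,\mc C})$, your own Taylor expansion gives the exponent $\tfrac{a_n^2}{2}\,\xi_n^{t}\widetilde H^{(q)}\xi_n$, so by Laplace's principle $\log\P(\xi_n\in B)\asymp -\tfrac{a_n^2}{2}\inf_{B}\xi^{t}\widetilde H^{(q)}\xi$. This is an LDP at speed $a_n^2$, \emph{not} at speed $n/a_n^2$; the two agree only when $a_n\sim n^{1/4}$. The correct rescaling is $a_n(\mb y_l^{\mc C}-\mb y^{*,q,l,\mc C})$, which produces the exponent $\tfrac{n}{2a_n^2}\,\xi^{t}\widetilde H^{(q)}\xi$ and hence the LDP at speed $n/a_n^2$ with rate $\tfrac12\xi^{t}\widetilde H^{(q)}\xi$ (this is exactly what the paper's Proposition~\ref{mdp} delivers). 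Note that the paper's own proof works throughout with the $a_n$-scaling; the reference in the theorem statement to the $\sqrt n$-normalized $Z_n$ of \eqref{zn} should be read with $a_n$ in place of $\sqrt n$.

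A second, smaller gap: in your final step you invoke the contraction principle for the \emph{linearization} $\xi\mapsto\Sigma_l(\mb m^{*,q})\,\tfrac{\partial\mb m_l^{\mc C}}{\partial\mb y_l^{\mc C}}\,\xi$, but $Z_n$ is a nonlinear function of $\xi_n$ through $\varphi_l^{E,\mc C}$. The contraction principle alone does not justify replacing a nonlinear map by its derivative; you need either an exponential-equivalence argument (showing the remainder is negligible at speed $n/a_n^2$) or the delta-method for large deviations, which is precisely the tool the paper invokes. Once the scaling is fixed and this step is made rigorous, your argument and the paper's coincide.
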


\subsection{Universality and Connection to Random Matrix Theory}\label{sec_universality}
There are several universal aspects of the results in this section (and Section \ref{sec_uniform}). The first one is the occurrence of Gaussian fluctuations, which is generic for uniform distributions on convex bodies, see \cite{Klartag}.	 As already briefly mentioned in the introduction, it is however surprising to find Gaussian fluctuations for vectors containing only finitely many coordinates (moments). This is a sign of the strong dependence between moments.

Even more interesting is the universality of the shape of the limiting measures $\mu_q, q=1,\dots,p$ of Theorem \ref{general_lln}. Their absolutely continuous parts are all of the form reciprocal of a polynomial times the universal $\mu_{y_1^*,y_2^*}^{ac}$ and thus depend on the constraint $\mc C$ and few values of the functions $V_1,\dots,V_{i_k+2}$ only.\\

As shown in Remark \ref{rem_not_eq_measure}, the limiting measures for constrained moment spaces are generally not equilibrium measures, in contrast to the ones obtained in unconstrained spaces. Let us now shed some more light on this phenomenon. Consider a vector $\m_n\in\mc M_{2n+1}(\R)$ with the corresponding canonical coordinates, i.e.~recurrence coefficients, and form the tridiagonal Jacobi matrix 
\begin{align*}
J:=\begin{pmatrix}
\a_1 & \sqrt{\b_1}\\
\sqrt{\b_1}&\a_2&\sqrt{\b_2}\\
&\sqrt{\b_2}&\a_3&\sqrt{\b_3}\\
&&\ddots &\ddots&\ddots\\
&&&\sqrt{\b_{2n-1}}&\a_{2n}&\sqrt{\b_{2n}}\\
&&&&\sqrt{\b_{2n}}&\a_{2n+1}
\end{pmatrix}.
\end{align*}
Note that $J$ is symmetric and thus diagonalizable with real eigenvalues $x_1,\dots,x_n$ and eigenvectors $\mb v_1,\dots,\mb v_n$. The probability measure
 \begin{align}
\mu^{(n)}:=\sum_{j=1}^n \langle \mb v_j,\mb e\rangle\d_{x_j}
\end{align}
is called spectral measure to $J$ and the first standard basis vector $\mb e$, where $\langle\cdot,\cdot\rangle$ denotes the Euclidean scalar product. It has the property that its $l$-th moment is $m_l$  which is by the spectral theorem simply the $(1,1)$-entry of $J^l$, $l=1,\dots,n$ (see \cite[Chapters 1.2, 1.3]{simon2011}). In particular, if $(m_1^{(n)},\dots,m_n^{(n)})$ is uniformly distributed in $\mc M_n^{\mc C}([0,1])$, then the associated random spectral measure $\mu^{(n)}$ fulfills the assumptions of Theorem \ref{uniform_ldp_functional}. Our laws of large numbers for random moments imply that for $(m_1^{(n)},\dots,m_n^{(n)})$ with distribution $\P_{n,E,V}^{\mc C}$, the associated random spectral measure $\mu^{(n)}$ converges weakly in distribution towards the limiting measure $c^{-1}\sum_{q=1}^p\l_q\mu_q$ from Theorem \ref{general_lln}.

To make the connection to equilibrium measures and random matrices, we remark that in random matrix theory equilibrium measures typically occur as limits of \textit{empirical} spectral measures
\begin{align*}
\nu_n:=\frac{1}n\sum_{j=1}^n \d_{x_j},
\end{align*}
where $x_1,\dots,x_n$ are the eigenvalues of some random $n\times n$ matrix. An important class of random matrix ensembles, the so-called $\b$-ensembles, has eigenvalue densities proportional to
\begin{align}\label{beta-ensemble}
e^{\frac\b2\sum_{i\ne j}\log\lv x_i-x_j\rv-n\sum_{j=1}^n Q_{ex}(x_j)}=e^{- n^2\lb\frac\b2\int_{\ne}\int \log\lv t-s\rv^{-1} d\nu_n(t)d\nu_n(s)+\int Q_{ex}(t)d\nu_n(t)\rb},
\end{align}
where $\int_{\ne}\int$ means that the diagonal is excluded in the integral, $\b>0$ and $Q_{ex}$ is some function called external field. In view of \eqref{beta-ensemble}, it is not surprising that for $n\to\infty$, the ensemble realizes eigenvalue configurations that approach the equilibrium measure which is a solution to minimizing \eqref{eq_minimization}, see e.g.~\cite[Theorem 2.1]{Johansson98}. In fact, a large deviations principle can be obtained for this approximation of the equilibrium measure, cf.~\cite[Theorem 2.6.1]{AGZ}. For our purposes, it is important to note that the speed of this large deviations principle is $n^2$, a fact that is readily read off \eqref{beta-ensemble}. Now, apparently the difference between the spectral measure connected to the random moment problem and the empirical spectral measure from random matrix theory, is the weighting of the atoms. In contrast to $\nu_n$, the weights $w_j:=\langle \mb v_j,\mb e_1\rangle$ of the spectral measure $\mu^{(n)}$ depend on the eigenvectors and are of course random. In certain cases, more can be said: For example, for $\m_{2n-1}^{(n)}$ uniform on $\mc M_{2n-1}([0,1])$, the eigenvalues of the associated random matrix $J$ are distributed according to \eqref{beta-ensemble} with $\b=4$ and $Q_{ex}(t)=0$ for $t\in[0,1]$ and $\infty$ elsewhere, see \cite[Theorem 2.2]{KillipNenciu}. Moreover, the weights are independent from the eigenvalues and have a Dirichlet$(2,\dots,2)$-distribution on the simplex $\{\mb w_n\in[0,1]^n:\sum_{j=1}^nw_j=1\}$. For $n\to\infty$, the weights concentrate around the barycenter $(1/n,\dots,1/n)$. However, the speed is $n$ and thus the concentration of the weights is weaker than that of the eigenvalues. More precisely, it was shown in \cite{GamboaRouault} that the random spectral measure corresponding to the uniform distribution on $\mc M_n([0,1])$ satisfies an LDP with speed $n$ and good rate function given by the reversed Kullback-Leibler divergence $I(\mu)=\mc K(\mu^0|\mu)$. In view of the close connection between spectral measure and the random moment problem, this also explains the occurence of the Kullback-Leibler divergence in Theorem \ref{uniform_lln}. However, the question remains why the unconstrained equilibrium measure $\mu^0$ still appears instead of the measure solving the constrained equilibrium problem. 
To understand this, let us consider what happens if we force the spectral measure to fulfil a constraint $\mc C$. Deviations of the eigenvalue configuration from the energy minimizing equilibrium measure (or rather its discrete analog, the Fekete points) are far more costly than deviations of the weights from $(1/n,\dots,1/n)$. Therefore the weights have to change dramatically from $(1/n,\dots,1/n)$ to some other values in order to bend the ``discretized arcsine measure'' to match the constraint.  This results e.g.~in a limiting measure $\mu^{\mc C}$ that is supported on the full set $[0,1]$ like the arcsine measure, as observed in Remark \ref{rem_not_eq_measure}.

    % Section
\section{Parametrizations and Proof of Law of Large Numbers} \label{sec_limit}
The first step towards proving the results of Sections \ref{sec_uniform} and \ref{sec_general_dist} is a parametrization of the constrained space $\mc M_n^{\mc C}(E)$. Define the set $A_{i_k}\subset \prod_{j=1}^{i_k}D_j$ as
\begin{align}
A_{i_k}:=\lb\varphi_{i_k}^E\rb^{-1}(\textup{relint}\,\mc M_{i_k}^{\mc C}(E)),\label{def_A_i_k}
\end{align}
where $\varphi_{i_k}^E$ is the parametrization \eqref{def_can_coord} of the unconstrained moment space and relint denotes the relative interior, i.e.~$\relint \mathcal{M}_{i_k}^{\mc C}(E):=\mc M_{i_k}^{\mc C}(E)\cap \inn\mc M_{i_k}(E)$. Define $\tilde A_{i_k}$ as the projection of $A_{i_k}$ to the coordinates in $\mb y_{i_k}^{\mc C}$ and $\tilde{\mc M}_n^\C(E)$ as the projection of $\mc M_n^\C(E)$ to the coordinates in $\m_n^\C$.
Then we have the following lemma.

\begin{lemma}\label{lemma_Jacobian}
For any $n \ge i_k$ the mapping \begin{align}
            \varphi_{n}^{E,\C}&:\left\{
            \begin{array}{ccc}
              \tilde A_{i_k}\times D_{i_k+1} \times \cdots \times D_n    & \to       &    \inn \tilde{\mathcal{M}}_{n}^{\mc C}(E)      \\
                \mb y_n^{\mc C}  & \mapsto   & \m_n^{\mc C}
            \end{array}
            \right.
        \end{align}
is a $C^\infty$-diffeomorphism with Jacobian
				\begin{align}\label{Jacobian}
				\left\lv \det\left[\frac{\partial \mb m_n^{\mc C}}{\partial \mb y_n^{\mc C}}\right]\right\rv:=\left\lv \det\left[\frac{\partial \varphi_{n}^{E,\C}(\mb y_n^{\mc C})}{\partial \mb y_n^{\mc C}}\right]\right\rv= \begin{cases}
                \prod \limits_{j = 1}^n (p_j(1 - p_j))^{n - j-d_{j, \C}} &, E = [0, 1], \\
                \prod \limits_{j = 1}^{n - 1} z_j^{n - j-d_{j, \C}} &, E = [0, \infty), \\
                \prod \limits_{j = 1}^{\lfloor n/2 \rfloor} \beta_j^{n - 2j-d_{j, \C}} &, E = \mathbb{R},
            \end{cases}
				\end{align}
where $d_{j, \C} := \#\{l \mid i_l > j\}$ and $\lfloor n/2\rfloor$ is the largest integer smaller or equal $n/2$. 
\end{lemma}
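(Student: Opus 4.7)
The plan is to build the diffeomorphism in two steps: first, recover the ``missing'' canonical coordinates $y_{i_1},\dots,y_{i_k}$ from the constraint, and then extract the Jacobian via a block Schur-complement identity applied to the unconstrained Jacobian $\det[\partial \m_n/\partial \mb y_n]$.

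\textbf{Construction and bijectivity.} For any candidate $\mb y_n^{\mc C}\in \tilde A_{i_k}\times D_{i_k+1}\times\cdots\times D_n$ I recover $\mb y_n^*:=(y_{i_1},\dots,y_{i_k})$ recursively from the constraint. By the three parametrizations~\eqref{def_can_coord}, the ordinary moment $m_l$ is a smooth function of $y_1,\dots,y_l$ only, and the dependence on $y_l$ itself is affine with coefficient $\partial m_l/\partial y_l$ that is strictly nonzero on $\relint\mc M_l(E)$. Hence the equation $m_{i_1}=c_{i_1}$ determines $y_{i_1}$ uniquely in terms of the earlier $y_j$'s; substituting, the equation $m_{i_2}=c_{i_2}$ then determines $y_{i_2}$; and so on. The resulting map $\mb y_n^{\mc C}\mapsto \mb y_n$ is $C^\infty$ by the implicit function theorem, and composing with $\varphi_n^E$ and projecting onto coordinates outside $\mc C$ yields $\varphi_n^{E,\mc C}$. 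Bijectivity onto $\inn\tilde{\mc M}_n^{\mc C}(E)$ follows since $\varphi_n^E$ is a diffeomorphism and the above recursion identifies the level set $\{\m_n^*=(c_{i_1},\dots,c_{i_k})\}$ in $\prod_j D_j$ with the graph of $\mb y_n^{\mc C}\mapsto \mb y_n^*$ over $\tilde A_{i_k}\times D_{i_k+1}\times\cdots\times D_n$.

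\textbf{Schur-complement extraction of the Jacobian.} Write the unconstrained Jacobian in block form with respect to the partition $\mb y_n=(\mb y_n^{\mc C},\mb y_n^*)$ and $\m_n=(\m_n^{\mc C},\m_n^*)$:
\begin{align*}
J_\varphi=\begin{pmatrix} A & B\\ C & D\end{pmatrix},\qquad A=\tfrac{\partial \m_n^{\mc C}}{\partial \mb y_n^{\mc C}},\ B=\tfrac{\partial \m_n^{\mc C}}{\partial \mb y_n^*},\ C=\tfrac{\partial \m_n^*}{\partial \mb y_n^{\mc C}},\ D=\tfrac{\partial \m_n^*}{\partial \mb y_n^*}.
\end{align*}
Implicit differentiation of $\m_n^*\equiv(c_{i_1},\dots,c_{i_k})$ gives $\partial \mb y_n^*/\partial \mb y_n^{\mc C}=-D^{-1}C$, so the total derivative of $\varphi_n^{E,\mc C}$ is $A-BD^{-1}C$, whose determinant equals $\det J_\varphi/\det D$. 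Since $m_{i_j}$ depends only on $y_1,\dots,y_{i_j}$, the matrix $D=[\partial m_{i_j}/\partial y_{i_l}]_{j,l=1}^k$ is lower triangular (rows/columns ordered by $i_1<\cdots<i_k$), hence $|\det D|=\prod_{j=1}^k|\partial m_{i_j}/\partial y_{i_j}|$. The analogous triangular structure of $J_\varphi$ in the natural ordering gives $|\det J_\varphi|=\prod_{l=1}^n|\partial m_l/\partial y_l|$, which coincides with the known unconstrained Jacobian (Skibinsky \eqref{Skibinsky} for $E=[0,1]$, and the counterparts from~\cite{DTV} for $E=[0,\infty)$ and $E=\R$). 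Cancellation yields
\begin{align*}
\left|\det\tfrac{\partial \m_n^{\mc C}}{\partial \mb y_n^{\mc C}}\right|=\prod_{\substack{l=1\\ l\notin\{i_1,\dots,i_k\}}}^n\left|\tfrac{\partial m_l}{\partial y_l}\right|.
\end{align*}

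\textbf{Combinatorial re-indexing into the claimed product.} In each case a short induction identifies $|\partial m_l/\partial y_l|$ as a product over strictly earlier canonical coordinates: $\prod_{j=1}^{l-1} p_j(1-p_j)$ for $E=[0,1]$, $\prod_{j=1}^{l-1} z_j$ for $E=[0,\infty)$, and $\prod_{j=1}^{\lfloor(l-1)/2\rfloor}\beta_j$ for $E=\R$ (checked directly from $m_l^+-m_l^-=(m_{l-1}^+-m_{l-1}^-)\cdot p_l(1-p_l)$ etc., or alternatively by extracting the diagonal of the triangular $J_\varphi$ from the known product formula). Exchanging the order of the two products expresses the above as the product of a single canonical factor raised to the number of its occurrences, which in the $[0,1]$ case gives exponent $\#\{l:j<l\leq n,\ l\notin\{i_1,\dots,i_k\}\}=n-j-d_{j,\mc C}$, and similarly in the remaining cases. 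This matches~\eqref{Jacobian}.

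\textbf{Main obstacle.} The substantive point is not the Schur-complement manipulation, which is purely algebraic, but the verification that $\tilde A_{i_k}$ is genuinely open and that the recursive solution for $\mb y_n^*$ stays in $\prod_j D_j$ for every $\mb y_n^{\mc C}\in \tilde A_{i_k}\times D_{i_k+1}\times\cdots\times D_n$; equivalently, that the admissibility $\mc M_{i_k}^{\mc C}(E)\cap\inn\mc M_{i_k}(E)\neq\emptyset$ together with freedom in the higher canonical coordinates keeps $\partial m_{i_j}/\partial y_{i_j}\neq 0$ along the recursion. This ensures the implicit function theorem applies uniformly and $D$ is nonsingular, so the Schur-complement computation is valid pointwise on the entire domain.
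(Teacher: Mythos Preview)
Your argument is correct, but it is more elaborate than the paper's. The paper observes directly that the \emph{constrained} Jacobian $[\partial m_j/\partial y_l]_{j,l\notin\{i_1,\dots,i_k\}}$ is itself lower triangular: since $m_j$ depends only on $y_1,\dots,y_j$, and each implicitly determined $y_{i_s}$ with $i_s<j$ depends only on free coordinates with index $<i_s<j$, the total derivative $\partial m_j/\partial y_l$ vanishes for free $l>j$; moreover the diagonal entry coincides with the unconstrained $\partial m_j/\partial y_j$ because no $y_{i_s}$ entering $m_j$ depends on $y_j$. The determinant is then immediately the product $\prod_{l\notin\{i_1,\dots,i_k\}}\partial m_l/\partial y_l$, and the rest is the same re-indexing you do.

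Your route through the full unconstrained Jacobian and the Schur complement $\det(A-BD^{-1}C)=\det J_\varphi/\det D$ recovers exactly this product after cancellation, so the two arguments are equivalent in outcome. What the Schur-complement packaging buys you is robustness: it would still work if the constrained Jacobian were not triangular, needing only that $D$ be invertible. What the paper's direct observation buys is economy---no block algebra, no appeal to the implicit function theorem for the Jacobian computation, just the one-line remark that triangularity survives the constraint. Your ``main obstacle'' paragraph correctly isolates the analytic point (nonvanishing of $\partial m_{i_j}/\partial y_{i_j}$ on the relative interior) that the paper leaves implicit under ``straightforward.''
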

    \begin{proof}
        It is straightforward to see that $\varphi_n^{E, \C}$ is indeed a diffeomorphism and we only need to calculate its Jacobian. Note that the Jacobian matrix is lower triangular and thus its determinant is given by the product of the entries on the diagonal
        \begin{align}
            \det \varphi_n^{E, \C} = \prod \limits_{\substack{1 \le j \le n \\ j \notin \{i_1, \ldots, i_k\}}} \frac{\partial m_j}{\partial y_j}.\label{jacobian_product}
        \end{align}
        In the case $E = [0, 1]$, rearranging \eqref{def_p_j} yields
        \begin{align*}
            m_j = p_j(m_j^+ - m_j^-) - m_j^- \,.
        \end{align*}
        As $m_j^\pm$ only depends on $m_1, \ldots, m_{j - 1}$, we obtain
        \begin{align*}
            \frac{\partial m_j}{\partial p_j} = m_j^+ - m_j^-
        \end{align*}
        and the assertion follows by an application of the formula (cf.~\cite{skibinsky1967})
        \begin{align}
            m_l^+ - m_l^- = \prod \limits_{j = 1}^{l - 1} p_i(1 - p_i) \label{moment_range}
        \end{align}
        and rearranging the order of multiplication in \eqref{jacobian_product}.
        
        The proof in the case $E = \mathbb{R}_+$ is analogous. Note that formula \eqref{def_z_j} yields
        \begin{align*}
            m_j = z_j(m_{j - 1} - m_{j - 1}^-) + m_j^-
        \end{align*}
        and consequently
        \begin{align*}
            \frac{\partial m_j}{\partial z_j} = m_{j - 1} - m_{j - 1}^- = \prod \limits_{l = 1}^{j - 1} \frac{m_l - m_l^-}{m_{l - 1} - m_{l - 1}^-} = z_1 \cdots z_{j - 1}\,.
        \end{align*}
        
        In the case $E = \mathbb{R}$, the calculation of the partial derivatives is slightly more complicated. As in the proof of Lemma~2.6 in \cite{DTV}, the partial derivatives of the moments with respect to the recursion coefficients can be calculated as
        \begin{align*}
            \frac{\partial m_{2j - 1}}{\partial \alpha_{j}} &= \beta_1 \cdots \beta_{j - 1}\,, \\
            \frac{\partial m_{2j}}{\partial \beta_{j}} &= \beta_1 \cdots \beta_{j - 1}\,.
        \end{align*}
        The assertion again follows by plugging in these formulas into \eqref{jacobian_product} and rearranging the order of multiplication.
    \end{proof}
		We are now in the position to prove Lemma \ref{lemma_density}.
\begin{proof}[Proof of Lemma \ref{lemma_density}]
We have to show that \eqref{normalizing} is finite, positivity is trivial. The compact case $E=[0,1]$ is clear. From the unbounded cases we exemplarily consider $E=\R_+$. Changing to canonical coordinates we see that 
\begin{align}\label{normalizing2}
Z_{n,E,V}^{\mc C}=\int_{\R_+^{n-k}} 1_{\tilde A_{i_k}}(\mb z^{\mc C}_{i_k})\exp\lb-n \sum \limits_{j = 1}^n \lb V_j(z_j)-\frac{n-j-d_{j,\C}}{n}\log z_j\rb\rb d\mb z_n^{\mc C}.
\end{align}
Let us drop the indicator $1_{\tilde A_{i_k}}(\mb z^{\mc C}_{i_k})$ from the integral, thereby making it larger, and consider first the $z_{i_j}$, $j=1,\dots,k$.
 Note that by \eqref{int_conditions}, there is a constant $c_1\in\R$ such that we have for $z_{i_j}>1$
\begin{align*}
V_j(z_j)-\frac{n-j-d_{j,\C}}{n}\log z_j\geq c_1.
\end{align*}
 Thus, for those $\mb z_n^{\mc C}$ with $z_{i_j}>1$ we can use the bound
\begin{align*}
\exp\lb- n\lb V_{i_j}(z_{i_j})-\frac{n-i_j-d_{i_j,\C}}{n}\log z_{i_j}\rb\rb\leq e^{-nc_1}.
\end{align*}
By continuity we have for $0\leq z_{i_j}\leq 1$ for some $c_2\in\R$ the bound $V_{i_j}(z_{i_j})\geq c_2$ and hence for those $\mb z_n^{\mc C}$ with $0\leq z_{i_j}\leq1$, we may use
\begin{align}
\exp\lb- n\lb V_{i_j}(z_{i_j})-\frac{n-i_j-d_{i_j,\C}}{n}\log z_{i_j}\rb\rb\leq e^{-nc_2}.
\end{align}
This shows
\begin{align*}
Z_{n,E,V}^{\mc C}\leq e^{-kn\min(c_1,c_2)}\int_{\R_+^{n-k}} \exp\lb-n \sum \limits_{\substack{j = 1\\j\not=i_1,\dots,i_k}}^n \lb V_j(z_j)-\frac{n-j-d_{j,\C}}{n}\log z_j\rb\rb d\mb z_n^{\mc C},
\end{align*}
which factorizes and is integrable by assumption \eqref{int_conditions}.
\end{proof}
As a direct consequence of Lemma \ref{lemma_Jacobian} we obtain the following 
\begin{cor} \label{corollary_density}
$\mathbb P_{n,E,V}^{\mc C}$ induces via $\lb\varphi_n^{E,\C}\rb^{-1}$ a probability measure on $\prod \limits_{\substack{1 \le j \le n \\j \notin \{i_1, \ldots, i_k\}}}D_j$ with density
\begin{align}
 \tilde P^{\mc C}_{n,E, V}(\mb y_n^{\mc C})  &:=\frac{1}{ Z^{\mc C}_{n,E,V}} \exp\Big(-n \sum \limits_{j = 1}^n W_j(y_j)\Big)1_{\tilde A_{i_k}}(\mb y^{\mc C}_{i_k})\label{density_canonical}\\
&\times \exp\lb\sum_{j=1}^n\lb W_j(y_j)-V_j(y_j)\rb\lb j+d_{j, \C}\rb\rb\label{density_remainder}
\end{align}
 w.r.t.~the $(n-k)$-dimensional Lebesgue measure.
\end{cor}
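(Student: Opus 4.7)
The proof is a direct change of variables. Since $\mathbb{P}_{n,E,V}^{\mc C}$ has density \eqref{def_P} on $\inn \mc M_n^{\mc C}(E)$ w.r.t.~$(n-k)$-dimensional Lebesgue measure and assigns no mass to the boundary, and Lemma \ref{lemma_Jacobian} identifies $\varphi_n^{E,\mc C}$ as a $C^\infty$-diffeomorphism from $\tilde A_{i_k}\times D_{i_k+1}\times\cdots\times D_n$ onto the interior of $\tilde{\mc M}_n^{\mc C}(E)$, the change-of-variables formula immediately yields the density
\begin{align*}
\tilde P^{\mc C}_{n,E,V}(\mb y_n^{\mc C}) = P^{\mc C}_{n,E,V}\bigl(\m_n(\mb y_n^{\mc C})\bigr)\,\left|\det\frac{\partial \m_n^{\mc C}}{\partial \mb y_n^{\mc C}}\right|\,1_{\tilde A_{i_k}}(\mb y_{i_k}^{\mc C})
\end{align*}
w.r.t.~the $(n-k)$-dimensional Lebesgue measure on $\prod_{j\notin\{i_1,\dots,i_k\}} D_j$, with the indicator reflecting that the parametrization is only defined on that set.

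It remains to verify that the Jacobian factor absorbs into the $W_j$-correction claimed in \eqref{density_canonical}--\eqref{density_remainder}. Comparing \eqref{Jacobian} with \eqref{def_W} shows that the logarithmic building block appearing in the Jacobian in each case is precisely $V_j-W_j$: this is $\log(y_j(1-y_j))$ for $E=[0,1]$, $\log y_j$ for $E=\R_+$ and for even $j$ when $E=\R$, and $0$ for odd $j$ when $E=\R$. Taking logarithms of the Jacobian and regrouping term by term then gives
\begin{align*}
-n V_j(y_j) + (n-j-d_{j,\C})\bigl(V_j(y_j)-W_j(y_j)\bigr) = -n W_j(y_j) + \bigl(W_j(y_j)-V_j(y_j)\bigr)(j+d_{j,\C}),
\end{align*}
and summation over $j=1,\dots,n$ separates the exponent into the leading factor $\exp\bigl(-n\sum W_j\bigr)$ of \eqref{density_canonical} and the sub-leading remainder \eqref{density_remainder}.

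No genuine obstacle is expected here, as the corollary is essentially a reorganization of Lemma \ref{lemma_Jacobian}. The only small bookkeeping subtlety is that for $E=\R_+$ and $E=\R$ the Jacobian product in \eqref{Jacobian} formally terminates at index $n-1$ or $\lfloor n/2\rfloor$ respectively; however, admissibility of $\mc C$ forces $i_k\le n$ and hence $d_{n,\C}=0$, so the absent upper-end exponents $n-j-d_{j,\C}$ vanish and the sums may be harmlessly extended all the way to $j=n$ without altering the expression.
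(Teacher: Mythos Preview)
Your argument is correct and is exactly the route the paper intends: the corollary is stated in the paper merely ``as a direct consequence of Lemma~\ref{lemma_Jacobian}'' with no further details, and your change of variables together with the algebraic identity $-nV_j+(n-j-d_{j,\C})(V_j-W_j)=-nW_j+(j+d_{j,\C})(W_j-V_j)$ is precisely how that consequence is obtained. Your bookkeeping remark about the upper end of the Jacobian product is also on point.
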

Note that in \eqref{density_canonical} and \eqref{density_remainder} the canonical coordinates $y_{i_1},\dots,y_{i_k}$ corresponding to the constrained moments $m_{i_1},\dots,m_{i_k}$ are functions of $\mb y_n^{\mc C}$. More precisely, $y_{i_j}$ depends on all coordinates $y_d$ from $\mb y_n^{\mc C}$ with $d<i_j$ and is determined by the requirement $m_{i_j}=c_{i_j}$, $j=1,\dots,k$.

We can see from \eqref{density_canonical} and \eqref{density_remainder} that the coordinates in $\mb y_{i_k}^{\mc C}$ are dependent. On the one hand, they are coupled via the indicator function. On the other hand, even apart from the indicator, in general the density does not factorize as $y_{i_1},\dots,y_{i_k}$ are functions of the lower canonical coordinates.

   Corollary \ref{corollary_density} allows for the simple but important observation that the canonical coordinates $\mb y_n^{\mc C}$ have a density of the form
    \begin{align}
        \mathbb{P}_n(dx) = \frac{1}{Z_n} e^{-nW(x)} R(x),\label{Laplace_density}
    \end{align}
    where $W, R: \mathbb{R}^m \to \mathbb{R}\cup\{\infty\}$ are some measurable functions and $Z_n$ is the normalization constant.

    On the level of canonical coordinates, probabilistic statements can be deduced directly from the form of the density \eqref{Laplace_density}. We will use in Section~\ref{sec_application_uniform} a different parametrization and thus obtain functions $W$ and $R$ different from those needed here to prove the results of Section \ref{sec_general_dist}.  To avoid duplication, we therefore state and prove a proposition valid for rather general $W$ and $R$. It might also be of independent interest.
		
		Assume from here on that  $W: \mathbb{R}^m \to \mathbb{R} \cup \{\infty\}$ and $R: \mathbb{R}^m \to \mathbb{R}_+$ are measurable and such that
    	\begin{align}
    	0<\int e^{-n_0W(x)} R(x) \, dx< \infty \quad \text{ for some }n_0 \in \mathbb{N}.\label{Laplace_assumption}
    	\end{align}

    \begin{prop}[Convergence to discrete distribution] \label{conv_discrete}
        Let $W$ and $R$ satisfy the following conditions:
        \begin{enumerate}
            \item $W$ attains its global minimum exactly in the points $\theta_1, \ldots, \theta_p$, i.e. $W(x) > W(\theta_1)$ for $x \in \mathbb{R}^m \setminus \{\theta_1, \ldots, \theta_p\}$ and $W(\theta_1) = \ldots = W(\theta_p)$.
            \item $W$ is twice continuously differentiable in a neighborhood of each $\theta_q$.
            \item For all $\varepsilon > 0$ and $M_\varepsilon := \bigcup \limits_{q=1}^p B_\varepsilon(\theta_q)$ we have $\inf \limits_{x \in M_\varepsilon^{\mc C}} W(x) > W(\theta_q)$, where $B_\e(\theta_q)$ denotes the open $\e$-ball around $\theta_q$.
             \item $\Hess W(\theta_q)$ is nonsingular for each $1\le q \le p$.
            \item $R$ is continuous in all $\theta_q$ and does not vanish in all $\theta_q$ simultaneously.
        \end{enumerate}
        Then $\mathbb{P}_n$ from \eqref{Laplace_density} converges as $n\to\infty$ weakly to the distribution
        \begin{align*}
            \mathbb{P} = \left(\sum \limits_{q=1}^p R(\theta_q) \det \Hess W(\theta_q)^{-1/2}\right)^{-1}\sum \limits_{q=1}^p R(\theta_q) \det \Hess W(\theta_q)^{-1/2} \delta_{\theta_q}.
        \end{align*}
    \end{prop}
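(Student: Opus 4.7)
This is a multi-dimensional Laplace-type statement with finitely many non-degenerate minimizers of $W$, so my plan is the classical two-step program. First, localize by showing that the mass of $\P_n$ outside a union $M_\e=\bigcup_q B_\e(\theta_q)$ of small neighborhoods is exponentially small. Second, inside each ball $B_\e(\theta_q)$ Taylor-expand $W$ to second order and rescale by $\sqrt n$ to obtain a Gaussian integral, which yields the precise asymptotics of the local integral, of $Z_n$, and ultimately of $\int f\,d\P_n$ for any bounded continuous test function $f$.

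For the localization step I would fix $\e>0$ and, using condition (3), pick $\d>0$ with $W(x)\geq W(\theta_1)+\d$ on $M_\e^c$. Writing $e^{-nW}=e^{-n_0 W}e^{-(n-n_0)W}$ and invoking \eqref{Laplace_assumption} gives
\begin{align*}
\int_{M_\e^c} e^{-nW(x)}R(x)\,dx \leq e^{-(n-n_0)(W(\theta_1)+\d)}\int e^{-n_0 W(x)}R(x)\,dx,
\end{align*}
so the tail is exponentially smaller than the $e^{-nW(\theta_1)}n^{-m/2}$-order contributions coming from the individual balls. For the local expansion, let $H_q:=\Hess W(\theta_q)$; by (1), (2) and (4), $H_q$ is positive definite. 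Taylor expansion allows one to shrink $\e$ so that $W(x)-W(\theta_q)\geq \tfrac14(x-\theta_q)^t H_q(x-\theta_q)$ on $B_\e(\theta_q)$. The substitution $x=\theta_q+y/\sqrt n$ then gives
\begin{align*}
\int_{B_\e(\theta_q)} e^{-nW(x)}R(x)\,dx = e^{-nW(\theta_q)}n^{-m/2}\int_{B_{\e\sqrt n}(0)} e^{-n[W(\theta_q+y/\sqrt n)-W(\theta_q)]}R(\theta_q+y/\sqrt n)\,dy,
\end{align*}
and the rescaled integrand converges pointwise to $e^{-\tfrac12 y^t H_q y}R(\theta_q)$ while being bounded by $e^{-\tfrac14 y^t H_q y}\sup_{B_\e(\theta_q)} R$, which is integrable on $\R^m$. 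Dominated convergence and continuity of $R$ at $\theta_q$ then yield
\begin{align*}
\int_{B_\e(\theta_q)} e^{-nW}R\,dx = e^{-nW(\theta_q)}n^{-m/2}\frac{(2\pi)^{m/2}}{\sqrt{\det H_q}}R(\theta_q)(1+o(1)),
\end{align*}
with the convention that the right-hand side is simply $o(e^{-nW(\theta_q)}n^{-m/2})$ if $R(\theta_q)=0$.

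Summing over $q$ and using the tail bound identifies $Z_n\sim e^{-nW(\theta_1)}n^{-m/2}(2\pi)^{m/2}\sum_q R(\theta_q)(\det H_q)^{-1/2}$, which is strictly positive by assumption (5). Repeating the local argument with $R$ replaced by $fR$ and using $f(\theta_q+y/\sqrt n)=f(\theta_q)+o(1)$ on the rescaled ball (continuity of $f$ at $\theta_q$), the same Laplace expansion picks up an extra factor $f(\theta_q)$ in each local contribution; the tail piece remains negligible since $f$ is bounded. Dividing by $Z_n$ and letting $\e\downarrow 0$ at the end yields $\int f\,d\P_n\to\sum_q w_q f(\theta_q)$ with the weights stated in the proposition, which is the claimed weak convergence. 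I expect the main technical obstacle to be establishing the uniform upper bound valid on the $n$-dependent rescaled domain $B_{\e\sqrt n}(0)$ that legitimizes dominated convergence; this is precisely where the non-degeneracy (4) of each Hessian and the smallness of the auxiliary parameter $\e$ enter.
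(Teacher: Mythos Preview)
Your proposal is correct and follows essentially the same Laplace-method program as the paper: localize via the tail bound from condition (3) and \eqref{Laplace_assumption}, then expand each local integral by the $y=\sqrt n\,(x-\theta_q)$ rescaling and dominated convergence using the quadratic lower bound on $W-W(\theta_q)$ coming from the nondegenerate Hessian. The only cosmetic difference is in the final step: the paper bounds $\liminf_n \P_n(U)$ from below for open $U$ and invokes Portmanteau, whereas you integrate a bounded continuous $f$ and pass to the limit directly; both routes are standard and equivalent here.
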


    \begin{proof}
 	We may assume without loss of generality that $W(\theta_1) = \ldots = W(\theta_p) = 0$. Let $U \subset \mathbb{R}^m$ be an arbitrary open set and $\varepsilon > 0$ so small, that the following conditions hold:
 	\begin{enumerate}[label=(\roman*)]
 		\item If $\theta_q \in U$ holds, then also $B_\e(\theta_q) \subset U$.
 		\item $W$ is twice differentiable on $B_\e(\theta_q)$.
 		\item $\Hess W(x) \ge M_q$ for all $x \in B_\e(\theta_q)$ and some positive definite matrices $M_q$, where $\ge$ stands for the Löwner (partial) order, i.e.~$M\geq N$ means that $M-N$ is positive semidefinite. \label{cond_pos}
 		\end{enumerate}
 	To see that the third condition is attainable, note that $\Hess W(\theta_q)$ is positive semidefinite, since $\theta_q$ is an absolute minimum of $W$. Since $\Hess W(\theta_q)$ is nonsingular, the matrix is even positive definite. By Weyl's inequality (see Theorem~1 in Section 6.7 of \cite{frank1968}) there is a $\delta > 0$ so that $\Hess W(\theta_q) - \delta I_m$ is positive definite. Since $\Hess W$ is twice continuously differentiable near all $\theta_q$, we may choose $\varepsilon > 0$ such that
 	\begin{align*}
 	\left|\frac{\partial^2 W}{\partial x_i \partial x_j}(x) - \frac{\partial^2 W}{\partial x_i \partial x_j}(\theta_q)\right| < \frac{\delta}{2m}
 	\end{align*}
 	is satisfied for all $x \in B_\e(\theta_q)$ and $1 \le i, j \le m$. By Gerschgorin's theorem (see Theorem~1 in Section 6.8 of \cite{frank1968}) this implies that all eigenvalues of $\Hess W(x) - \Hess W(\theta_q)$ have absolute value less than $\frac{\delta}{2}$. From Weyl's inequality we can conclude that all eigenvalues of $\Hess W(x) - \Hess W(\theta_q) + \delta I_m$ must be at least $\frac{\delta}{2}$. This means that in the Löwner order the inequalities
 	\begin{align*}
 	\Hess W(x) > \Hess W(\theta_q) - \delta I_m > 0
 	\end{align*}
 	hold for all $x \in B_\e(\theta_q)$ and we may choose $M_i = \Hess W(\theta_q) - \delta I_m$ in condition~\ref{cond_pos}. \br

Now, with $M_\e$ from condition (3) of the proposition,
 	\begin{align*}
 	\mathbb{P}_n(U) &\ge \frac{\sum \limits_{q=1}^p 1_U(\theta_q) \int_{B_\e(\theta_q)} e^{-n W(x)}R(x) \, dx}{\int_{(M_\varepsilon)^c}e^{-nW(x)} R(x) \, dx  + \sum \limits_{q=1}^p \int_{B_\e(\theta_q)} e^{-nW(x)} R(x) \, dx}.
 	\end{align*}
 By a standard application of Laplace's method, we have as $n\to\infty$
 \begin{align*}
 \int_{B_\e(\theta_q)} e^{-nW(x)} R(x) \, dx = n^{-m/2} \left(R(\theta_q) \frac{(2\pi)^{m/2}}{\sqrt{\det \Hess W(\theta_q)}} + o(1)\right).
 \end{align*}
 Furthermore, with $K := \inf \limits_{x \in (M_\varepsilon)^c} W(x) > 0$ we obtain
 \begin{align*}
 \int_{(M_\varepsilon)^c} e^{-nW(x)} R(x) \, dx \le e^{-(n - n_0)K} \int_{(M_\varepsilon)^c} e^{-n_0W(x)} R(x) \, dx = o(n^{-m/2}),
 \end{align*}
 implying
 	\begin{align*}
 	&P_n(U)\geq \frac{\left(\frac{2\pi}{n}\right)^{m/2} \left(\sum \limits_{q=1}^p 1_U(\theta_q) R(\theta_q) \det \Hess W(\theta_q)^{-1/2} + o(1) \right)}{\left(\frac{2\pi}{n}\right)^{m/2} \left(\sum \limits_{q=1}^p R(\theta_q) \det \Hess W(\theta_q)^{-1/2} + o(1)\right)} \xrightarrow{n \to \infty} \mathbb{P}(U).
 	\end{align*}
 	By Portmanteau's theorem, this implies $\mathbb{P}_n \xrightarrow{d} \mathbb{P}$.
 \end{proof}
    
    We will see that applied to our random moment problem, Proposition \ref{conv_discrete} shows convergence of the random canonical coordinates $\mb y_{n}^{\C,(n)}$ to some discrete distribution concentrated on the moment sequences of certain limiting measures. The main task now is to identify these measures from the information on their canonical coordinates. To this end, we make use of the following proposition which gives information on probability measures with eventually constant recurrence coefficients. We remark that similar results are known, see e.g.~\cite[Theorem 2.1]{GeronimoIliev} and references therein, yet they are not sufficient for our purposes.
    
    \begin{prop}\label{measure_representation}
    	Let $\mu$ be a probability measure with recursion coefficients $\alpha_1, \ldots, \alpha_j$ and $\beta_1, \ldots, \beta_{j - 1} > 0$, such that $\alpha_l = \alpha$ and $\beta_{l - 1} = \beta > 0$ for all $l > j$. Then the Lebesgue decomposition of $\mu$ consists of an absolutely continuous part $\mu^{ac}$ and a discrete part $\mu^d$. The absolutely continuous part $\mu^{ac}$ has the density
    	\begin{align*}
    	\frac{\sqrt{4\beta - (x - \alpha)^2} \beta_1 \cdots \beta_{j - 1}}{2\pi D(x)}
    	\end{align*}
    	on the interval $I_{\alpha, \beta} := [\alpha - 2\sqrt{\beta}, \alpha + 2\sqrt{\beta}]$, where $D(x) := P_j^2(x) - P_{j + 1}(x) P_{j - 1}(x)$ and $P_i,\,i\geq0$ are the monic orthogonal polynomials corresponding to $\mu$. To leading order (with the convention $\beta_0 = 0$),
    	\begin{align}
    	D(x) = (\alpha - \alpha_j) x^{2j - 1} + \Big[(\beta - \beta_{j - 1}) + \big(2(\alpha_1 + \cdots + \alpha_{j - 1}) + \alpha_j\big)(\alpha_j - \alpha)\Big] x^{2j - 2} + O(x^{2j - 3}). \label{expansion_d}
    	\end{align}
    	Moreover, $D$ is strictly positive on $\inn I_{\alpha, \beta}$ and possible zeros on the boundary of $I_{\a,\b}$ are simple.
    	
    	The discrete part $\mu^d$ is a linear combination of dirac measures
    	\begin{align*}
    	\mu^d = \sum \limits_{i = 1}^{2j - 1} \lambda_i \delta_{x_i},
    	\end{align*}
    	where the $x_i$ are the real roots of $D$ outside of $I_{\alpha, \beta}$. The weights $\lambda_i$ are possibly zero and are given as
    	\begin{align}
    	\lambda_i := \lim \limits_{x \to x_i} \Big|(x - x_i) \frac{f(x)}{D(x)}\Big|,\label{weights}
    	\end{align}
    	where the function $f$ is defined by
    	\begin{align}
    	f(x) := Q_j(x) P_j(x) - Q_{j + 1}(x) P_{j - 1}(x) + \frac{\beta_1 \cdots \beta_{j - 1}}{2} \big((x - \alpha) - z(x) \sqrt{(x - \alpha)^2 - 4\beta}\big) \label{func_f}
    	\end{align}
    	with 
    	\begin{align*}
    	z(x) := 
    	\begin{cases}
    	1 & x > \alpha + 2\sqrt{\beta} \\
    	-1 & x < \alpha - 2\sqrt{\beta}
    	\end{cases},
    	\qquad \qquad Q_i(x) := \int \frac{P_i(x) - P_i(t)}{x - t} \, d\mu(t).
    	\end{align*}
    \end{prop}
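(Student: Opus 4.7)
My plan is to compute the Stieltjes transform $m(z):=\int\frac{d\mu(t)}{z-t}$ explicitly, and read off $\mu^{ac}$ and $\mu^d$ from its boundary values and poles. The key observation is that $\psi_n(z):=P_n(z)m(z)-Q_n(z)=\int\frac{P_n(t)}{z-t}\,d\mu(t)$ satisfies the same three-term recurrence as $P_n$ and tends to zero as $n\to\infty$ for $z\in\mathbb{C}\setminus\supp\mu$. Since the recurrence coefficients are constant for $l>j$, the sequence $(\psi_n)_{n\geq j-1}$ satisfies the free recurrence $\psi_{n+1}=(z-\alpha)\psi_n-\beta\psi_{n-1}$, whose two fundamental solutions are the powers of $\lambda_\pm(z):=\tfrac{(z-\alpha)\pm\sqrt{(z-\alpha)^2-4\beta}}{2}$, where I take the branch of the square root analytic on $\mathbb{C}\setminus I_{\alpha,\beta}$ and behaving like $z-\alpha$ at infinity. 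Off the spectrum one has $|\lambda_-|<|\lambda_+|$, so only the $\lambda_-^n$ component is compatible with decay, forcing $\psi_n\propto\lambda_-^n$ for $n\geq j-1$. Reading off $\psi_j/\psi_{j-1}=\lambda_-$ and solving for $m$ yields
\begin{align*}
m(z)=\frac{Q_j(z)-\lambda_-(z)Q_{j-1}(z)}{P_j(z)-\lambda_-(z)P_{j-1}(z)}=:\frac{u(z)}{v(z)}.
\end{align*}

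Next I multiply numerator and denominator by $\bar v(z):=P_j(z)-\lambda_+(z)P_{j-1}(z)$. Using $\lambda_+\lambda_-=\beta$ and $\lambda_++\lambda_-=z-\alpha$, the denominator becomes $P_j^2-(z-\alpha)P_jP_{j-1}+\beta P_{j-1}^2$, which via the free recurrence $P_{j+1}=(z-\alpha)P_j-\beta P_{j-1}$ collapses to the polynomial $D(z)=P_j^2(z)-P_{j+1}(z)P_{j-1}(z)$. For $z=x+i0^+$ on $I_{\alpha,\beta}$ the quantities $\lambda_+$ and $\lambda_-$ are complex conjugates, and the imaginary part of the numerator telescopes via the Wronskian identity $Q_jP_{j-1}-Q_{j-1}P_j=\beta_1\cdots\beta_{j-1}$ (an induction from the three-term recurrences for $P_n$ and $Q_n$), producing the claimed density through $\mu^{ac}(x)=-\tfrac{1}{\pi}\operatorname{Im}m(x+i0^+)$. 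The expansion \eqref{expansion_d} is a direct computation: with $P_i(x)=x^i-A_ix^{i-1}+B_ix^{i-2}+O(x^{i-3})$ and the recursions $A_{i+1}=A_i+\alpha_{i+1}$, $B_{i+1}=B_i+\alpha_{i+1}A_i-\beta_i$, the $x^{2j}$-terms in $P_j^2-P_{j+1}P_{j-1}$ cancel and the subleading coefficients recombine into the stated form. For positivity of $D$ on $\inn I_{\alpha,\beta}$, the factorization $D=|v|^2=(P_j-\tfrac{x-\alpha}{2}P_{j-1})^2+\tfrac{4\beta-(x-\alpha)^2}{4}P_{j-1}^2$ is a genuine sum of squares, which can only vanish if $P_j$ and $P_{j-1}$ share a zero, ruled out by strict interlacing of zeros of consecutive orthogonal polynomials. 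At a boundary point $x_b=\alpha\pm2\sqrt\beta$ the values $\lambda_\pm=\pm\sqrt\beta$ coalesce; if $D(x_b)=0$ then $P_j(x_b)=\pm\sqrt\beta\,P_{j-1}(x_b)$ with $P_{j-1}(x_b)\neq0$ (by interlacing), and a short differentiation gives $D'(x_b)=\mp\sqrt\beta\,P_{j-1}^2(x_b)\neq0$, establishing simplicity.

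For the discrete part, outside $I_{\alpha,\beta}$ the $\lambda_\pm$ are real and distinct, and the factorization $D=v\bar v$ partitions the real zeros of $D$ disjointly between $v$ and $\bar v$ (a common zero would again force $P_j=P_{j-1}=0$). Poles of $m=u/v$ occur precisely at zeros of $v$ and give the atoms of $\mu^d$; at zeros of $\bar v$, the rationalized form $m=u\bar v/D$ has both numerator and denominator vanishing simply, a removable singularity corresponding to atom weight zero. Both cases are captured uniformly by $\lambda_i=\lim_{x\to x_i}|(x-x_i)f(x)/D(x)|$ once $f:=u\bar v$ is identified. A careful expansion of $u\bar v$, using $\lambda_++\lambda_-=z-\alpha$, the free recurrence $Q_{j+1}=(z-\alpha)Q_j-\beta Q_{j-1}$, and the Wronskian identity, yields
\begin{align*}
u\bar v=Q_jP_j-Q_{j+1}P_{j-1}+\beta_1\cdots\beta_{j-1}\,\lambda_-(x);
\end{align*}
substituting the explicit $\lambda_-(x)=\tfrac{(x-\alpha)-z(x)\sqrt{(x-\alpha)^2-4\beta}}{2}$ on each connected component of $\mathbb{R}\setminus I_{\alpha,\beta}$ gives precisely \eqref{func_f}. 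The main conceptual difficulty lies in the initial derivation of the $m$-formula, specifically in justifying the decay-based selection of the $\lambda_-$-solution and in carefully tracking the analytic branch of $\sqrt{\cdot}$ across $I_{\alpha,\beta}$; once that is in place, the remainder is systematic manipulation of the three-term recurrences and the Wronskian.
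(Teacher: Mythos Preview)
Your argument is correct and follows essentially the same path as the paper's: both obtain $m(z)=(Q_j-\lambda_-Q_{j-1})/(P_j-\lambda_-P_{j-1})$ (the paper writes $\lambda_-=\beta S_{\alpha,\beta}$), rationalize by $P_j-\lambda_+P_{j-1}$ to produce $D$, extract the density via the Wronskian identity $P_{j-1}Q_j-Q_{j-1}P_j=h_{j-1}$ (which the paper obtains from Christoffel--Darboux), analyze $D$ through the same sum-of-squares representation, and expand $u\bar v$ into the stated $f$. The only methodological difference is in step one: the paper invokes the continued-fraction expansion of $S_\mu$ (Markov's theorem, citing an external lemma), whereas you select the subdominant solution for $\psi_n=P_nm-Q_n$ directly. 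One small imprecision to fix: the claim $\psi_n(z)\to 0$ is false in general (e.g.\ when $\beta>1$ and $z$ is near $I_{\alpha,\beta}$); what is true and sufficient is $\psi_n(z)/\sqrt{h_n}\to 0$, from Bessel's inequality for $(z-\cdot)^{-1}\in L^2(\mu)$, which together with $|\lambda_+(z)|>\sqrt\beta$ off $I_{\alpha,\beta}$ still forces $c_+=0$.
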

    	The polynomials $Q_i,\, i\geq0$ in the proposition are called \emph{secondary polynomials} and satisfy the shifted recurrence relation $Q_0(x) = 0$, $Q_1(x) = 1$ and
\begin{align}
Q_i(x) = (x - \alpha_i) Q_{i - 1}(x) - \beta_{i - 1} Q_{i - 1}(x),\quad i \ge 2.\label{recursion_secondary}
			\end{align}
    \begin{proof}
    	By a classical result due to Markov, the Stieltjes transform $S_\mu(z)$ of $\mu$,
			\begin{align*}
			S_\mu(z):=\int_E\frac{1}{z-t}d\mu(t),\quad z\in\mathbb C_+:=\{z\in\mathbb C\,:\,\Re z>0\},
			\end{align*}
			admits a continued fraction expansion in terms of the recurrence coefficients, 
    	\begin{align}
    	S_\mu(z) = \polter{1}{z - \alpha_1} - \polter{\beta_1}{z - \alpha_2} - \cdots - \polter{\beta_{j - 1}}{z - \alpha_{j}} - \beta S_{\alpha, \beta}(z)\,,\label{cf_expansion}
    	\end{align}
			see e.g.~\cite[Lemma 4.1]{DTV} for a simple proof. Here $S_{\alpha, \beta}(z)$ is the Stieltjes transform of the measure with constant recurrence coefficients $\a,\,\b$. 
				Writing the convergent
				\begin{align*}
				\polter{1}{z - \alpha_1} - \polter{\beta_1}{z - \alpha_2} - \cdots - \polter{\beta_{j - 1}}{z - \alpha_{j}}=\frac{A_j}{B_j}
				\end{align*}
				as a single fraction, it is a standard argument to see that $B_j$, $j\geq 0$, are polynomials of degree $j$ satisfying the same recursion as the monic orthogonal polynomials $P_j$, $j\geq0$ w.r.t.~$\mu$. Comparison of coefficients then gives $B_j=P_j$. Likewise one finds that $A_j,\,j\geq0$ satisfies the recursion \eqref{recursion_secondary} and concludes $A_j=Q_j$. This gives
    	\begin{align}
    	S_\mu(z) = \frac{Q_j(z) - \beta S_{\alpha, \beta}(z) Q_{j - 1}(z) }{P_j(z) - \beta S_{\alpha, \beta}(z) P_{j - 1}(z)}\, \label{stieltjes1}.
    	\end{align}

			 Expanding $S_{\alpha, \beta}(z)$ shows that it satisfies the recursion
			\begin{align*}
			S_{\alpha, \beta}(z)=\frac1{z-\a-\b S_{\alpha, \beta}(z)},
			\end{align*}
			which yields $S_{\alpha, \beta}(z) = \frac{z - \alpha - \sqrt{(z - \alpha)^2 - 4\beta}}{2\beta}$. In this representation, the square root $\sqrt{y}$ of a complex number $y \in \mathbb{C} \setminus \mathbb{R}_+$ is defined to be the unique solution of the equation $x^2 = y$ with a positive imaginary part. With this definition, the square root appearing in $S_{\alpha, \beta}$ can be continuously extended to the real line via
    	\begin{align}
    	\lim \limits_{\substack{z \to x \\ z \in \mathbb{C}^+}} \sqrt{(z - \alpha)^2 - 4\beta} =
    	\begin{cases}
    	- \sqrt{(x - \alpha)^2 - 4\beta} &, \text{if } x < \alpha - 2\sqrt{\beta} \\
    	i\sqrt{4\beta - (x - \alpha)^2} &, \text{if } x  \in [\alpha - 2\sqrt{\beta}, \alpha + 2\sqrt{\beta}] \\
    	\sqrt{(x - \alpha)^2 - 4\beta} &, \text{if } x > \alpha + 2\sqrt{\beta} \\
    	\end{cases}\,.\label{root_extension}
    	\end{align}
			The latter expression then identifies $S_{\alpha, \beta}$ via the Stieltjes inversion formula
			\begin{align}
			\frac{\mu(dx)}{dx}=-\frac1\pi\lim_{y\to0}\Im S_\mu(x+iy)\label{Stieltjes_inversion}
			\end{align}
			as the Stieltjes transform of the semicircle distribution $\mu_{\a,\b}$ of \eqref{mu_E} on $I_{\alpha, \beta} = [\alpha - 2\sqrt{\beta}, \alpha + 2\sqrt{\beta}]$.
    
    	Observing \eqref{stieltjes1} and \eqref{root_extension}, we can see that $S_\mu$ can be extended to the interior of the interval $I_{\alpha, \beta}$ as 
    	\begin{align}
    	S_\mu(x) = \frac{Q_j(x) - \frac{1}{2}(x - \alpha) Q_{j - 1}(x) + \frac{i}{2} \sqrt{4\beta - (x - \alpha)^2} Q_{j - 1}(x) }{P_j(x) - \frac{1}{2}(x - \alpha) P_{j - 1}(x) + \frac{i}{2} \sqrt{4\beta - (x - \alpha)^2} P_{j - 1}(x)} \label{stieltjes-extension}
    	\end{align}
    	for any $x$ in which the denominator does not vanish. The only points in which the imaginary part of the denominator vanishes are the roots of the polynomial $P_{j - 1}$. However, in these points the real part of the denominator is given by $P_j(x)$, which does not vanish as the roots of two consecutive orthogonal polynomials strictly interlace. Consequently, $S_\mu$ can be continuously extended to $\inn I_{\alpha, \beta}$ and thus it admits a density on this interval by \eqref{Stieltjes_inversion}. To compute it, 
we start with the imaginary part of \eqref{stieltjes-extension}, expanding the fraction with the complex conjugate of the denominator and thus obtaining the new denominator
    	\begin{align*}
    	D(x) :={}& (P_j(x) - \beta S_{\alpha, \beta}(x) P_{j - 1}(x))\overline{(P_j(x) - \beta S_{\alpha, \beta}(x) P_{j - 1}(x))} \\
    	={}& \Big(P_j(x) - \frac{1}{2}(x - \alpha) P_{j - 1}(x)\Big)^2 + \frac{1}{4}\big(4\beta - (x - \alpha)^2\big) P_{j - 1}^2(x) \labelonce{denominator}\\
    	={}& P_j^2(x) - (x - \alpha) P_j(x) P_{j - 1}(x) + \beta P_{j - 1}^2(x) \\
    	={}& P_j^2(x) - P_{j + 1}(x) P_{j - 1}(x)\,.
    	\end{align*}
    	Calculating the corresponding numerator we arrive at
    	\begin{align*}
    	\Im S_\mu(x) = -\frac{1}{2D(x)} \sqrt{4\beta - (x - \alpha)^2} \Big(P_{j - 1}(x) Q_j(x) - Q_{j - 1}(x) P_j(x)\Big)
    	\end{align*}
    	for any $x \in (\alpha - 2\sqrt{\beta}, \alpha + 2\sqrt{\beta})$. We can further simplify using the Christoffel-Darboux formula (Theorem 4.5 in \cite{chihara1978})
    	\begin{align}
    	P_{j - 1}(x) Q_j(x) - Q_{j - 1}(x) P_j(x) ={}& \int \frac{P_{j - 1}(t)P_j(x) - P_j(t)P_{j - 1}(x)}{x-t} \, d\mu(t) \nonumber \\
    	={}& h_{j - 1} \int \sum \limits_{i = 0}^{j - 1} \frac{P_i(x)P_i(t)}{h_i} \, d\mu(t) = h_{j - 1} \label{christoffel}
    	\end{align}
    	with $h_i := \int P_i^2(t) \, d\mu(t) = \beta_1 \cdots \beta_i$. Consequently, $\mu$ has a density on the interval $\inn I_{\alpha, \beta}$ that is given by
    	\begin{align*}
    	\frac{\sqrt{4\beta - (x - \alpha)^2} \beta_1 \cdots \beta_{j - 1}}{2\pi D(x)}\,.
    	\end{align*}
    	
    	Next, we turn to a more thorough investigation of $D(x)$. First of all, from the representation \eqref{denominator} we can conclude that the only possible roots of $D$ on $I_{\alpha, \beta}$ must lie on the boundary of the interval. If $x_0$ is such a root, then
    	\begin{align*}
    	P_j(x_0) - \frac{1}{2}(x_0 - \alpha) P_{j - 1}(x_0) = 0
    	\end{align*}
    	must hold. Since $P_j$ and $P_{j - 1}$ have no common roots, this implies $P_{j - 1}(x_0) \ne 0$. But then $x_0$ can only be a simple root of $D$ by \eqref{denominator}. This also shows $\mu(\{\alpha \pm 2\sqrt{\beta}\}) = 0$ by the Stieltjes inversion formula for atoms
			\begin{align}
			\mu(\{x\})=-\lim_{y\to0}y\Im S_\mu(x+iy)\label{Stieltjes_inversion2}.
			\end{align}
    	Furthermore, an induction argument shows
    	\begin{align*}
    	P_j(x) ={}& x^j - (\alpha_1 + \cdots + \alpha_j) x^{j - 1} + \left(\sum \limits_{1 \le i < l \le j} \alpha_i \alpha_l - \sum \limits_{i = 1}^{j - 1} \beta_i \right)x^{j - 2} + O(x^{j - 3})\,,\\
    	P_j^2(x) ={}& x^{2j} - 2(\alpha_1 + \cdots + \alpha_j) x^{2j - 1}  \\
    	&+ \left(2 \sum \limits_{1 \le i < l \le j} \alpha_i \alpha_l - 2\sum \limits_{i = 1}^{j - 1} \beta_i + (\alpha_1 + \cdots + \alpha_j)^2 \right)x^{2j - 2} + O(x^{2j - 3})\,, \\
    	P_j(x) P_{j - 1}(x) ={}& x^{2j - 1} - (2(\alpha_1 + \cdots + \alpha_{j - 1}) + \alpha_j) x^{2j - 2} \\
    	&+ \Bigg(\sum \limits_{1 \le i < l \le j - 1} \alpha_i \alpha_l - \sum \limits_{i = 1}^{j - 2} \beta_i + \sum \limits_{1 \le i < l \le j } \alpha_i \alpha_l - \sum \limits_{i = 1}^{j - 1} \beta_i \\
    	&\qquad+ (\alpha_1 + \cdots + \alpha_j) (\alpha_1 + \cdots + \alpha_{j - 1})\Bigg)x^{2j - 3} + O(x^{2j - 4})\,.
    	\end{align*}
    	Plugging these formulas into \eqref{denominator} proves \eqref{expansion_d}. \br

    	Observing \eqref{root_extension}, we can continuously extend the Stieltjes transform to $\mathbb{R} \setminus I_{\alpha, \beta}$ via
    	\begin{align}
    	\frac{Q_j(x) - \frac{1}{2}(x - \alpha) Q_{j - 1}(x) + \frac{1}{2} z(x)\sqrt{(x - \alpha)^2 - 4\beta} Q_{j - 1}(x)}{P_j(x) - \frac{1}{2}(x - \alpha) P_{j - 1}(x) + \frac{1}{2} z(x)\sqrt{(x - \alpha)^2 - 4\beta} P_{j - 1}(x)} \label{stieltjes-extension2}
    	\end{align}
    	in any point $x$ that is not a root of the denominator. As this expression is real, the absolutely continuous part vanishes outside of $I_{\alpha, \beta}$. Let us now examine whether $\mu$ admits any point masses in the roots of the denominator. If $x_0$ is such a root, then an expansion of the polynomial terms in \eqref{stieltjes1} combined with the Stieltjes inversion formula \eqref{Stieltjes_inversion2} shows
    	\begin{align*}
    	\mu(\{x_0\}) = \lim \limits_{x \to x_0} |(x - x_0) S_\mu(x)|\,,
    	\end{align*}
    	where $S_\mu(x)$ is the extension of the Stieltjes transform to the real line \eqref{stieltjes-extension2}.
    	It only remains to show that the representation \eqref{weights} of the discrete part of $\mu$ in the formulation of the theorem holds. To this end, note that by expanding the fraction in \eqref{stieltjes-extension2} with the term
    	\begin{align*}
    	P_j(x) - \frac{1}{2}(x - \alpha) P_{j - 1}(x) - \frac{1}{2} z(x)\sqrt{(x - \alpha)^2 - 4\beta} P_{j - 1}(x)
    	\end{align*}
    	the denominator becomes $D(x)$. Using the recursion \eqref{pol_recursion} of the orthogonal polynomials and the Christoffel-Darboux formula \eqref{christoffel} shows
    	\begin{align*}
    	&(Q_j(x) - \frac{1}{2}(x - \alpha) Q_{j - 1}(x) + \frac{1}{2} z(x)\sqrt{(x - \alpha)^2 - 4\beta} Q_{j - 1}(x)) \\
    	&\cdot (P_j(x) - \frac{1}{2}(x - \alpha) P_{j - 1}(x) - \frac{1}{2} z(x)\sqrt{(x - \alpha)^2 - 4\beta} P_{j - 1}(x)) \\
    	={}& Q_j(x) P_j(x) - \frac{1}{2}(x - \alpha) (Q_j(x) P_{j - 1}(x) + Q_{j - 1}(x) P_j(x)) \\
    	& -z(x) \frac{1}{2}(Q_j(x) P_{j - 1}(x) - Q_{j - 1}(x) P_j(x)) \sqrt{(x - \alpha)^2 - 4\beta} + \beta Q_{j - 1}(x) P_{j - 1}(x) \\
    	={}& Q_j(x) P_j(x) + \frac{1}{2}(x - \alpha) (Q_j(x) P_{j - 1}(x) - Q_{j - 1}(x) P_j(x)) - (x -  \alpha) Q_j(x) P_{j - 1}(x) \\
    	&-z(x) \frac{h_{j - 1}}{2}\sqrt{(x - \alpha)^2 - 4\beta} + \beta Q_{j - 1}(x) P_{j - 1}(x) \\
    	={}& Q_j(x) P_j(x) - Q_{j + 1}(x) P_{j - 1}(x) + \frac{h_{j - 1}}{2}\Big((x - \alpha)  -z(x) \sqrt{(x - \alpha)^2 - 4\beta}\Big) = f(x)\,. \qedhere
    	\end{align*}
    \end{proof}

		\begin{example*}[Continuation of Example \ref{example_general}]\label{example_continued}
        It remains to compute the measure in \eqref{example_measure} from the information that the sequence of random moments $(m_1^{(n)}, \ldots, m_l^{(n)})$ converges in probability to the sequence of moments $(m_1(\mu), \ldots, m_l(\mu))$, where the measure $\mu$ is uniquely determined by having recursion coefficients $\alpha_1 = 0$, $\alpha_j = 1$ and $\beta_{j - 1} = \frac{1}{4}$ for all $j > 1$. In light of Theorem~\ref{measure_representation}, we calculate the orthogonal polynomials $P_j$ as well as the secondary polynomials $Q_j$ up to order $2$ as
        \begin{align*}
            P_0(x) &= 1\,, & Q_0(x) &= 0\,,\\
            P_1(x) &= x - \alpha_1 = x\,, & Q_1(x) &= 1\,, \\
            P_2(x) &= (x - \alpha_2)P_1(x) - \beta_1 P_0(x) = x^2 - x - \frac{1}{4}\,, & Q_2(x) &= x - \alpha_2 = x - 1 \,.
        \end{align*}
        Consequently, the polynomial $D$ is given by
        \begin{align*}
            D(x) = P_1^2(x) - P_2(x) P_0(x) = x + \frac{1}{4} \,.
        \end{align*}
        Therefore $\mu$ has a possible point mass in $-\frac{1}{4}$. In order to calculate the weight of the point mass, we determine
        \begin{align*}
            f(x) = 1 + \frac{1}{2}(x - 1 - z(x) \sqrt{x^2-2x}) \,.
        \end{align*}
        This yields the weight of the point mass as
        \begin{align*}
            \lim \limits_{x \to -\frac{1}{4}} \left|\Big(x + \frac{1}{4}\Big) \frac{f(x)}{x + \frac{1}{4}}\right| = f\Big(-\frac{1}{4}\Big) = \frac{3}{4}\,.
        \end{align*}
        Hence the measure $\mu$ is given by \eqref{example_measure}.
\end{example*}   
    We are now in the position to prove Theorem~\ref{general_lln}.
    \begin{proof}[Proof of Theorem~\ref{general_lln}]
    	After a change of variables by means of Lemma~\ref{lemma_Jacobian}, an application of Proposition \ref{conv_discrete} shows the convergence on the level of canonical coordinates. The convergence result can then be transferred back to the ordinary moments with the continuous mapping theorem.
    	
    	It remains to show the claimed representation of $\mu_q$. The measure $\mu_q$ is uniquely determined by having $\mathbf{y}^{*,q}$ as first $i_k$ canonical coordinates, as well as
    	\begin{align*}
            y_j =
            \begin{cases}
                y_1^* &, j \text{ odd} \\
                y_2^* &, j \text{ even} 
            \end{cases}
    	\end{align*}
    	for all $j > i_k$, where $y_1^*$ and $y_2^*$ were defined in \eqref{def_y_12}.
			We have to distinguish the three cases of $E$.\\
	
	\medskip
 	
    	\textbf{Case $E = \mathbb{R}$:} In this case the respresentation is a direct consequence of Proposition~\ref{measure_representation}. \\
			
			\medskip
			
			\textbf{Case $E = \mathbb{R_+}$:}
			In the case $E = \mathbb{R}_+$, \cite[p.~18]{DTV} shows that the recursion coefficients of the orthogonal polynomials corresponding to $\mu_q$ are given in terms of the canonical coordinates as
    	\begin{align*}
            \alpha_j &= z_{2j - 2} + z_{2j - 1}\,, \\
            \beta_j &= z_{2j - 1} z_{2j}\,,
    	\end{align*}
		and moreover, we have
		\begin{align}
		z_{2j + 1} = -\frac{P_{j + 1}(0)}{P_j(0)}.\label{ratio_of_ops}
		\end{align}
    	Since the odd and even parameters $z_j$ are constant for all $j > i_k$, the $\alpha_j$ and $\beta_{j - 1}$ are constant for all $j > l := \lfloor \frac{i_k + 3}{2} \rfloor$ and we may apply Proposition~\ref{measure_representation}. Observing the expansion~\eqref{expansion_d} of the polynomial $D(x)$ from Proposition \ref{measure_representation} for $i_k$ even and odd respectively, we can see that the degree of $D$ is at most $i_k + 1$. To prove that the absolutely continuous part of the measure is of the claimed form
			$$\mu_q^{ac}(x)=\frac{1}{D_q(x)}\mu_{z_1^*,z_2^*}^{ac}(x)$$
			for some polynomial $D_q$ of degree at most $i_k$, we have to show $D(0)=0$ in order to factor out the linear polynomial $p(x)=x$.
			By \eqref{ratio_of_ops},
    	\begin{align}
            P_s(0) = (-1)^s\prod \limits_{j = 0}^{s - 1}\Big(- \frac{P_{j + 1}(0)}{P_{j}(0)}\Big) = (-1)^s \prod \limits_{j = 0}^{s - 1} z_{2j + 1}\,,\label{poly_zero}
    	\end{align}
    	which yields
    	\begin{align*}
            D(0) = P_l^2(0) - P_{l - 1}(0) P_{l + 1}(0) = \left(\prod \limits_{j = 0}^{l - 2} z_{2j + 1}^2\right) z_{2l - 1}(z_{2l - 1} - z_{2l + 1}) = 0\,,
    	\end{align*}
			since $z_{2l - 1} = z_{2l + 1}$.
        Factoring out the polynomial $p(x)=x$ yields the desired result for the absolutely continuous part with the polynomial $D_q(x):=D(x)/x$ which is of degree at most $i_k$ and is strictly positive at 0 since the root of $D$ in $0$ is simple by Proposition \ref{measure_representation}.
        
        For the discrete part of the measure, let $x_0$ be a root of $D(x)$, i.e.~a number satisfying $P_l^2(x_0) = P_{l - 1}(x_0) P_{l + 1}(x_0)$. Since two consecutive orthogonal polynomials have no common roots, $x_0$ is neither a root of $P_{l - 1}$, $P_l$ or $P_{l + 1}$. We then have
        \begin{align*}
            &\Big(Q_l(x_0)P_l(x_0) - Q_{l + 1}(x_0) P_{l - 1}(x_0)\Big) P_{l + 1}(x_0) \\
            ={}& Q_l(x_0) P_l(x_0) P_{l + 1}(x_0) - Q_{l + 1}(x_0) P_l^2(x_0) \\
            ={}& -P_l(x_0)\Big(Q_{l + 1}(x_0) P_{l}(x_0) - Q_l(x_0) P_{l + 1}(x)\Big) = -P_l(x_0) h_l,
        \end{align*}
        where we have used the identity \eqref{christoffel} in the last line. Recalling the definition of the function $f$ in \eqref{func_f} we therefore obtain
        \begin{align}
            f(x_0) = -\frac{P_l(x_0) h_l}{P_{l + 1}(x_0)} + \frac{\beta_1 \cdots \beta_{l - 1}}{2} \Big(x - \alpha - z(x)\sqrt{(x - \alpha)^2 - 4\beta}\Big)\,. \label{value_f}
        \end{align}
        Plugging in the value $x_0 = 0$ and using~\eqref{poly_zero} as well as $\alpha = z_1^* + z_2^*$, $\beta = z_1^*z_2^*$ and $h_j = \beta_1 \cdots \beta_j$ we obtain
        \begin{align*}
            f(0) ={}& \frac{h_l}{z_{2l + 1}} + \frac{h_{l- 1}}{2} \Big(-z_1^* - z_2^* + \sqrt{(z_1^* + z_2^*)^2 - 4z_1^*z_2^*}\Big) \\
            ={}& h_{l - 1} \Big(\frac{z_{2l - 1} z_{2l}}{z_{2l + 1}} + \frac{1}{2} \big(-z_1^* - z_2^* + |z_1^* - z_2^*|\big)\Big) \\
            ={}& h_{l - 1} (z_2^* - z_1^*)_+\,,
        \end{align*}
        where we have used that $z_{2l - 1} = z_{2l + 1} = z_1^*$ and $z_{2l} = z_2^*$ (note that $l$ was chosen such that $2l - 1 \ge i_k + 1$).
        
        Recall that the weight of the point mass of $\mu_q$ in zero can be calculated via the formula
        \begin{align*}
            \lambda = \lim \limits_{x \to 0} \left| x\frac{f(x)}{D(x)}\right|\,.
        \end{align*}
        We now have to consider three separate cases:
        
        \begin{enumerate}
            \item If $\mu_{z_1^*, z_2^*}$ has an atom at $0$, then $z_2^* > z_1^*$ must hold. In this case, we have $f(0) > 0$ and consequently $\lambda > 0$. This means that $\mu_q$ has a point mass at zero, as well as up to $i_k$ further point masses at the roots of $D$ outside of zero.
            \item If $\mu_{z_1^*, z_2^*}$ has no atom at $0$ and $0$ is not a simple root of $D$, then $D$ has at most $i_k$ distinct roots and therefore up to $i_k$ possible point masses.
            \item If $\mu_{z_1^*, z_2^*}$ has no atom at $0$ and $0$ is a simple root of $D$, then $z_1^* \le z_2^*$, $f(0) = 0$ and $D'(0) \ne 0$ holds. We therefore obtain
                \begin{align*}
                    \lambda = \lim \limits_{x \to 0} \left| x\frac{f(x)}{D(x)}\right| = \left| \frac{f(0)}{D'(0)} \right| = 0\,.
                \end{align*}
                Consequently, $\mu_q$ has no point mass in zero and up to $i_k$ further point masses in the roots of $D$ outside of zero.
        \end{enumerate}
        Common to all three cases is that $\mu_q$ has at most $i_k$ point masses more than $\mu_{z_1^*, z_2^*}$, which yields the desired result.\\
				\medskip

\textbf{Case $E=[0,1]$:}    	In the case $E = [0, 1]$, arguments as above show that the recursion coefficients $\alpha_j$ and $\beta_{j - 1}$ are constant for all $j > l := \lfloor \frac{i_k + 4}{2}\rfloor$. Similar calculations using (see \cite{detnag2012})
			\begin{align}
            \begin{split}
    	\alpha_j &= q_{2j - 3} p_{2j - 2} + q_{2j - 2} p_{2j - 1},\\
    	\beta_j &= q_{2j - 2} p_{2j - 1} q_{2j - 1} p_{2j},
        \end{split}\label{canonical_recursion}
    	\end{align}
    	where $q_j := 1 - p_j$ and $p_{-1} := p_0 := 0$, and formula~\eqref{poly_zero} then show that $D$ is a polynomial of degree at most $i_k + 2$, satisfying $D(0) = 0$. Furthermore, it is well-known that the monic orthogonal polynomial $P_s$ of order $s$ is given by
        \begin{align*}
            P_s(x) = (\Delta_{s - 1})^{-1}
            \begin{pmatrix}
                m_0 & m_1 & \ldots & m_{s - 1} & 1 \\
                m_1 & m_2 & \cdots & m_s & x \\
                \vdots & \vdots & \ddots & \vdots & \vdots \\
                m_s & m_{s + 1} & \cdots & m_{2s - 1} & x^s
            \end{pmatrix}
        \end{align*}
        with $\Delta_{s - 1} = \det\Big( (m_{a + b})_{a, b = 0}^{s - 1}\Big)$ and $m_0:=1$, see e.g.~\cite[p.~38]{Deift}. This yields
        \begin{align*}
            P_2(1) &= (\Delta_{s - 1})^{-1}
            \begin{pmatrix}
                m_0 & m_1 & \ldots & m_{s - 1} & 1 \\
                m_1 & m_2 & \cdots & m_s & 1 \\
                \vdots & \vdots & \ddots & \vdots & \vdots \\
                m_s & m_{s + 1} & \cdots & m_{2s - 1} & 1
            \end{pmatrix}\
             \\
            &= (\Delta_{s - 1})^{-1}
            \begin{pmatrix}
                m_0 - m_1 & m_1 - m_2 & \ldots & m_{s - 1} - m_s & 0 \\
                m_1 - m_2 & m_2 - m_3 & \cdots & m_s - m_{s + 1} & 0 \\
                \vdots & \vdots & \ddots & \vdots & \vdots \\
                m_{s - 1} - m_2 & m_{s} - m_{s + 1} & \cdots & m_{2s - 2} - m_{2s - 1} & 0 \\
                m_s & m_{s + 1} & \cdots & m_{2s - 1} & 1
            \end{pmatrix}\\
            &= (\Delta_{s - 1})^{-1}\det \Big((m_{a + b} - m_{a + b + 1})_{a, b = 0}^{s - 1}\Big) \,.
    	\end{align*}
    	Combined with  \cite[Theorem~1.4.10 and Corollary~1.4.6]{dettstud1997} this results in
    	\begin{align}
            P_s(1) = \prod \limits_{j = 0}^{s - 1} q_{2j} q_{2j + 1} \label{poly_one}
    	\end{align}
    	and consequently
    	\begin{align*}
            D(1) = \left(\prod \limits_{j = 0}^{l - 2} q_{2j}^2 q_{2j + 1}^2 \right) q_{2l - 2} q_{2l - 1} (q_{2l - 2} q_{2l - 1} - q_{2l}q_{2l + 1}) = 0\,.
    	\end{align*}
    	Factoring out $x(1-x)$ in $D$ proves the representation of the absolutely continuous part.
        
        For the discrete part of $\mu_q$, we observe that by~\eqref{value_f}, \eqref{poly_zero} and \eqref{poly_one} we have
        \begin{align*}
            f(0) &= h_{l - 1} (p_2^* - p_1^*)_+,\\
            f(1) &= h_{l - 1} \Big(- p_1^* p_2^* + \frac{1}{2} \big(1 - (p_1^* + p_2^* - 2p_1^*p_2^*) - |1 - (p_1^* + p_2^*)|\big)\Big) \\
            &= h_{l - 1} (p_1^* + p_2^* - 1)_+.
        \end{align*}
        Arguments similar to the case $E = \mathbb{R}_+$ show that $\mu_q$ has point masses in the point masses of $\mu_{p_1^*, p_2^*}$, as well as up to $i_k$ additional point masses.
    \end{proof}

    \medskip

 \section{Proof of the LDP}\label{sec_LDP}
 In this section, we will prove Theorem \ref{ldp}. As before, we start with a general proposition. We note in passing that the following statement is stronger than a usual LDP in the sense that it establishes the rate function for any measurable set as an actual limit that is described as an essential infimum, instead of just lower and upper bounds as in a usual LDP. This is formulated in the following lemma.

 \begin{lemma}[Large deviation principle]\label{exact_ldp}
 	Let $\mathbb{P}_n$ be a sequence of probability measures on $\mathbb{R}^m$ and $a_n \to \infty$ a sequence such that
 	\begin{align}
 	\lim \limits_{n \to \infty} \frac{1}{a_n} \log \mathbb{P}_n(\Gamma) = - \essinf \limits_{x \in \Gamma} S(x) \label{exact_ldp_eq}
 	\end{align}
 	holds for all measurable sets $\Gamma \subset \mathbb{R}^m$ and some function $S:\R^m\to[0,\infty]$. Then $\mathbb{P}_n$ satisfies an LDP with speed $a_n$ and  rate function
 	\begin{align*}
 	I(x) := \operatorname*{ess\,lim\,inf}\limits_{y \to x} S(y) = \sup\{\essinf \limits_{y \in U} S(y) \mid x \in U, U \text{ open}\}.
 	\end{align*}
 \end{lemma}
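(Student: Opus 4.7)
The plan is to verify the three ingredients of an LDP: lower semicontinuity of the rate function $I$, the lower bound on open sets, and the upper bound on closed sets, which together correspond to the inequalities in~\eqref{def_LDP}. I work with the second (supremum) representation of $I$; its equivalence with $\essliminf_{y\to x}S(y)=\lim_{r\downarrow 0}\essinf_{y\in B(x,r)}S(y)$ is immediate since shrinking the ball only enlarges the essential infimum, and every open $U\ni x$ contains some such ball.

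First, I would check that $I$ is lower semicontinuous. If $I(x)>\alpha$, then by the defining supremum there is an open $U\ni x$ with $\essinf_{y\in U}S(y)>\alpha$; for every $z\in U$ the same $U$ is an open neighborhood of $z$, so $I(z)\ge\essinf_{y\in U}S(y)>\alpha$, showing $U\subseteq\{I>\alpha\}$ and hence that $\{I>\alpha\}$ is open.

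Next, for the lower bound in~\eqref{def_LDP}, monotonicity gives $\mathbb P_n(\Gamma)\ge\mathbb P_n(\inn\Gamma)$, and applying the hypothesis to the measurable set $\inn\Gamma$ yields $\liminf_{n\to\infty}\tfrac{1}{a_n}\log\mathbb P_n(\Gamma)\ge -\essinf_{y\in\inn\Gamma}S(y)$. For any $x\in\inn\Gamma$, the set $\inn\Gamma$ is itself an open neighborhood of $x$, so $I(x)\ge\essinf_{y\in\inn\Gamma}S(y)$; taking the infimum over $x\in\inn\Gamma$ gives $\inf_{x\in\inn\Gamma}I(x)\ge\essinf_{y\in\inn\Gamma}S(y)$, which is the required lower bound.

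The main obstacle is the upper bound, which I reduce to the inequality $\inf_{x\in\overline\Gamma}I(x)\le\essinf_{y\in\overline\Gamma}S(y)$. The key step is to show that $S>\alpha$ holds Lebesgue-almost everywhere on the open set $V_\alpha:=\{I>\alpha\}$. For each $x\in V_\alpha$ I pick an open neighborhood $U_x\ni x$ with $\essinf_{y\in U_x}S(y)>\alpha$; since $\R^m$ is Lindel\"of, the cover $\{U_x\}_{x\in V_\alpha}$ admits a countable subcover $\{U_{x_k}\}_{k\ge 1}$ of $V_\alpha$, and $S>\alpha$ a.e.\ on each $U_{x_k}$ implies $S>\alpha$ a.e.\ on the countable union $V_\alpha$. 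Granted this, if $I(y)>\alpha$ for every $y\in\overline\Gamma$, then $\overline\Gamma\subseteq V_\alpha$ and $S>\alpha$ a.e.\ on $\overline\Gamma$, whence $\essinf_{y\in\overline\Gamma}S(y)\ge\alpha$. Taking the contrapositive and letting $\alpha\downarrow\essinf_{y\in\overline\Gamma}S(y)$ yields $\inf_{x\in\overline\Gamma}I(x)\le\essinf_{y\in\overline\Gamma}S(y)$; combined with $\mathbb P_n(\Gamma)\le\mathbb P_n(\overline\Gamma)$ and the hypothesis applied to $\overline\Gamma$ this gives the upper bound. The degenerate case in which $\overline\Gamma$ or $\inn\Gamma$ has Lebesgue measure zero (so the corresponding essential infimum is $+\infty$ and $\mathbb P_n$ has no mass there asymptotically) is handled trivially by the same chain of inequalities.
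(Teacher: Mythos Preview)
Your proof is correct and follows essentially the same route as the paper's: both verify lower semicontinuity of $I$, the open-set lower bound via $I(x)\ge\essinf_{y\in U}S(y)$ for any open $U\ni x$, and the closed-set upper bound via a countable-subcover argument (the paper invokes $\sigma$-compactness of $\R^m$, you invoke the equivalent Lindel\"of property). The only cosmetic difference is that the paper covers the closed set $F$ directly, while you first establish the auxiliary fact that $S>\alpha$ a.e.\ on the open superlevel set $V_\alpha=\{I>\alpha\}$ and then specialize; the substance is identical.
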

 \begin{proof}
 	It suffices to prove three assertions:
 	\begin{enumerate}
 		\item $I$ is lower semicontinuous
 		\item $\inf \limits_{x \in G} I(x) \ge \essinf\limits_{x \in G} S(x)$ holds for all open sets $G$.
 		\item $\inf \limits_{x \in F} I(x) \le \essinf\limits_{x \in F} S(x)$ holds for all closed sets $F$.
 	\end{enumerate}
 	
 	Let $x_0 \in \mathcal{X}$ and $K \le I(x_0)$ be arbitrary with $K < \infty$. Choose $U$ open so that $\essinf \limits_{y \in U} S(y) \ge K - \varepsilon$ holds. This implies for all $x \in U$ the inequality
 	\begin{align*}
 	I(x) = \sup\{\essinf \limits_{y \in V} S(y) \mid x \in V, V \text{ open}\} \ge \essinf \limits_{y \in U} S(y) \ge K - \varepsilon.
 	\end{align*}
 	Now the lower semicontinuity follows, since for $I(x_0)$ finite, we may choose $K = I(x_0)$ and for infinite $I(x_0)$ by taking the limit $K \to \infty$. 
 	
 	Next, let $G$ be an open set with $x \in G$. Then we have
 	\begin{align*}
 	I(x) = \sup\{\essinf \limits_{y \in U} S(y) \mid x \in U, U \text{ open}\} \ge \essinf \limits_{y \in G} S(y)
 	\end{align*}
 	The assertion (2) now follows by taking the infimum over all $x \in G$ on the left-hand side.
 	
 	Finally, let $F$ be closed, fix an arbitrary finite $K \le \inf \limits_{y \in F} I(y)$ and let $\varepsilon > 0$. By the definition of $I$ we can find for each $x \in F$ an open set $U_x\ni x$ such that
 	\begin{align*}
 	\essinf \limits_{y \in U_x} S(y) \ge K - \varepsilon.
 	\end{align*}
 	As $\{U_x\}_{x \in F}$ is an open covering of the closed set $F$ and $\mathbb{R}^m$ is $\sigma$-compact, we can find a countable subcovering $U_{x_1}, U_{x_2}, \ldots$ for $F$. This implies
 	\begin{align*}
 	\essinf \limits_{x \in F} S(x) \ge \inf \limits_{n \in \mathbb{N}} \essinf \limits_{x \in U_{x_n}} S(x) \ge K - \varepsilon\,,
 	\end{align*}
 	and thus we get $\essinf \limits_{x \in F} S(x) \ge K$. The result now follows by choosing $K := \inf \limits_{y \in F} I(y)$ if the infimum or else letting $K \to \infty$.
 \end{proof}

 \begin{prop}[Large deviations] \label{ldp}
 	Let $\mathbb P_n$ be given by \eqref{Laplace_density} satisfying \eqref{Laplace_assumption} and assume that $W: \mathbb{R}^m \to \mathbb{R} \cup \{\infty\}$ and $R: \mathbb{R}^m \to [0, \infty ])$ are measurable functions such that 
 	\begin{enumerate}
 		\item $W$ is essentially lower bounded,
 		\item $R$ is almost surely strictly positive.
 	\end{enumerate}
 	Then $\mathbb{P}_n$ satisfies for all measurable $\Gamma \subset \mathbb{R}^m$
 	\begin{align*}
 	\lim \limits_{n \to \infty} \frac{1}{n} \log \mathbb{P}_n(\Gamma) = -\essinf \limits_{x \in \Gamma} S(x)\,,
 	\end{align*}
 	where
 	\begin{align*}
 	S(x) := W(x) - \essinf\limits_{y \in \mathbb{R}^m} W(y),
 	\end{align*}
 	
 \end{prop}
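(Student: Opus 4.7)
\medskip

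\noindent\textbf{Proof plan for Proposition \ref{ldp}.}
The plan is to estimate numerator and denominator of the ratio $\mathbb{P}_n(\Gamma) = Z_n^{-1}\int_\Gamma e^{-nW(x)}R(x)\,dx$ separately, on the exact exponential scale. Writing $w_0 := \essinf_{y\in\R^m} W(y)$ and $s_\Gamma := \essinf_{x\in\Gamma} W(x)$, the identity $S(x) = W(x) - w_0$ turns the claim into
\begin{align*}
\lim_{n\to\infty}\frac{1}{n}\log N_n(\Gamma) = -s_\Gamma, \qquad N_n(\Gamma) := \int_\Gamma e^{-nW(x)} R(x)\,dx,
\end{align*}
for every measurable $\Gamma\subset\R^m$ (taken with $\Gamma = \R^m$ this also yields $\tfrac{1}{n}\log Z_n\to -w_0$, and then subtracting the two limits produces $-s_\Gamma+w_0 = -\essinf_\Gamma S$). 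Once this is established, Lemma~\ref{exact_ldp} applied with $a_n = n$ delivers the stated LDP.

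For the upper bound I would leverage the integrability condition~\eqref{Laplace_assumption} by the simple factorization $e^{-nW(x)} = e^{-n_0 W(x)}\cdot e^{-(n-n_0)W(x)}$. Since $W(x)\ge s_\Gamma$ for Lebesgue-almost every $x\in\Gamma$, the tail factor is bounded a.e.\ on $\Gamma$ by $e^{-(n-n_0)s_\Gamma}$, giving
\begin{align*}
N_n(\Gamma)\;\le\;e^{-(n-n_0)s_\Gamma}\int e^{-n_0W(x)}R(x)\,dx \;=\; C\,e^{-(n-n_0)s_\Gamma},
\end{align*}
with $C<\infty$ by \eqref{Laplace_assumption}. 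Taking logarithms, dividing by $n$ and passing to the limit yields $\limsup_n \tfrac{1}{n}\log N_n(\Gamma)\le -s_\Gamma$ (the case $s_\Gamma = \infty$ corresponds to $W=\infty$ a.e.\ on $\Gamma$, in which case $N_n(\Gamma) = 0$ and the inequality is trivial).

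For the matching lower bound, fix $\delta>0$ and consider the sub-level intersection $\Gamma_\delta := \Gamma\cap\{W<s_\Gamma+\delta\}$. By definition of the essential infimum, $\Gamma_\delta$ has strictly positive Lebesgue measure, and since $R>0$ almost everywhere by assumption (2), $c_\delta := \int_{\Gamma_\delta} R(x)\,dx > 0$. Consequently
\begin{align*}
N_n(\Gamma)\;\ge\;\int_{\Gamma_\delta} e^{-nW(x)}R(x)\,dx\;\ge\;e^{-n(s_\Gamma+\delta)}\,c_\delta,
\end{align*}
so $\liminf_n \tfrac{1}{n}\log N_n(\Gamma)\ge -s_\Gamma-\delta$, and letting $\delta\downarrow 0$ closes the gap. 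The main point where care is required is this lower bound: one has to marry the positivity of $R$ with the positivity of the Lebesgue measure of $\Gamma_\delta$, both of which are exactly the content of hypotheses (1) and (2); in the degenerate case $s_\Gamma = +\infty$ one instead notes that $\mathbb{P}_n(\Gamma) = 0$ and the identity holds in $-\infty$. Combining the two one-sided bounds and dividing the numerator by the denominator (whose asymptotic rate is $-w_0$ by the same reasoning specialised to $\Gamma = \R^m$) completes the proof.
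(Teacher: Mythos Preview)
Your proposal is correct and follows essentially the same route as the paper: reduce to the exact Laplace-type limit $\lim_n \tfrac1n\log N_n(\Gamma)=-\essinf_\Gamma W$, obtain the upper bound via the factorization $e^{-nW}=e^{-n_0W}e^{-(n-n_0)W}$ and the integrability assumption, and the lower bound by restricting to the sub-level set $\Gamma\cap\{W<s_\Gamma+\delta\}$ of positive measure together with $R>0$ a.e. The only cosmetic difference is that the paper keeps the factor $e^{-n_0W}$ in the lower-bound constant (so finiteness is immediate from \eqref{Laplace_assumption}), whereas your constant $c_\delta=\int_{\Gamma_\delta}R$ is finite because $e^{-n_0W}\ge e^{-n_0(s_\Gamma+\delta)}$ on $\Gamma_\delta$; this is harmless.
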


 \begin{proof}
 	We will first show that
 	\begin{align}
 	\lim \limits_{n \to \infty} \frac{1}{n} \log \int_\Gamma e^{-nW(x)} R(x) \, dx = -\essinf\limits_{x \in \Gamma} W(x) \label{ldp_limit}
 	\end{align}
 	holds for all measurable sets $\Gamma \subset \mathbb{R}^m$. Set $\alpha := \essinf\limits_{x \in \Gamma} W(x)$, and note that the case $\alpha = \infty$ is trivial. If $\alpha < \infty$ we have $\Gamma \subset \{W \ge \alpha\} \cup N$ for some Lebesgue nullset $N$. This implies with $n_0$ from \eqref{Laplace_assumption}
 	\begin{align}
 	\limsup \limits_{n \to \infty} \frac{1}{n} \log \int_\Gamma e^{-nW(x)} R(x) \, dx
 	\le{}& \limsup \limits_{n \to \infty} \frac{1}{n} \log \int\limits_{\{W \ge \alpha\}} e^{-n_0W(x)} R(x) e^{-(n - n_0)\alpha} \, dx \nonumber\\
 	\le{}& \limsup \limits_{n \to \infty} \left(\frac{\log C}{n} - \alpha \frac{n - n_0}{n} \right) = -\alpha. \label{ldp_upper_bound}
 	\end{align}\bigskip
 	
 	Let $\varepsilon > 0$ be arbitrary. By definition of $\alpha$ we have $\text{vol}_m(\Gamma \cap \{W \le \alpha + \varepsilon\}) > 0$. Since $R$ is almost surely strictly positive, this implies
 	\begin{align*}
 	0 < \int\limits_{\Gamma \cap \{W \le \alpha + \varepsilon\}} e^{-n_0W(x)} R(x) \, dx < \infty,
 	\end{align*}
 	from which we can conclude
 	\begin{align*}
 	&\liminf \limits_{n \to \infty} \frac{1}{n} \log \int_\Gamma e^{-nW(x)} R(x) \, dx \\
 	\ge{}& \liminf \limits_{n \to \infty} \frac{1}{n} \log \int\limits_{\Gamma \cap \{W \le \alpha + \varepsilon\}} e^{-n_0W(x)} e^{-(n-n_0)(\alpha + \varepsilon)}R(x) \, dx = -(\alpha + \varepsilon).
 	\end{align*}
 	Now let $\varepsilon \to 0$ and combine this equation with (\ref{ldp_upper_bound}) to conclude that (\ref{ldp_limit}) holds. This yields the desired result, as
 	\begin{align*}
 	\lim \limits_{n \to \infty} \frac{1}{n} \log \mathbb{P}_n(\Gamma) ={}& \lim \limits_{n \to \infty}\left\{ \frac{1}{n} \log \int_\Gamma e^{-nW(x)}R(x) \, dx - \frac{1}{n} \log \int_{\mathbb{R}^m}e^{-nW(x)} R(x) \, dx \right\}\\
 	={}&-\essinf \limits_{x \in \Gamma} W(x) + \essinf\limits_{y \in \mathbb{R}^m} W(y) =  -\essinf \limits_{x \in \Gamma} S(x).
 	\end{align*}
 \end{proof}

 \begin{proof}[Proof of Theorem~\ref{general_dist_ldp}]
 	This theorem is a consequence of the previously obtained results and follows in three simple steps. Firstly, change coordinates from $(m_1^{(n)}, \ldots, m_l^{(n)})$ to the vector of unconstrained canonical coordinates $\mb y_l^{\C,(n)}$ by means of Corollary~\ref{corollary_density}. Secondly, apply Proposition~\ref{ldp} and Lemma \ref{exact_ldp_eq} to obtain an LDP for the vector of canonical coordinates.
 	The last step then is to transfer the LDP from canonical coordinates to ordinary coordinates using the contraction principle.
 	
 	The compactness of level sets of $I$ is clear for $E=[0,1]$ and follows for unbounded $E$ from the integrability conditions \eqref{int_conditions}.
 \end{proof}

\section{Proofs of the CLT and MDP}\label{sec_CLT}
In this section we prove the  central limit theorem and the moderate deviations principle of Section \ref{sec_general_dist}. As before, for later use we first give results for the general density $\mathbb{P}_n$ of the form \eqref{Laplace_density}, satisfying \eqref{Laplace_assumption}.

\begin{prop}[Convergence to normal distribution]\label{conv_normal}
	Let the same conditions and notations as in Proposition~\ref{conv_discrete} hold and let $X_n$ be a random variable having distribution with density $\mathbb{P}_n$ from \eqref{Laplace_density}. Let $	\theta_n^*$ be such that
	\begin{align*}
	\theta_n^* \in \argmin\{\|x - X_n\| \mid x \in \{\theta_1, \ldots, \theta_p\}\}
	\end{align*}
	with the standard Euclidean norm $\|\cdot\|$ and
	\begin{align*}
	Y_n := \Hess W(\theta_n^*)^{1/2}\sqrt{n}(X_n - \theta_n^*).
	\end{align*}
	If $N \sim \mathcal{N}(0, 1)$, then as $n\to\infty$
	\begin{align*}
	d_{TV}(Y_n, N) \to 0,
	\end{align*}
	where $d_{TV}$ denotes the total variation distance 
	\begin{align*}
	d_{TV}(X, Y) := \sup \limits_{A \in \mathcal{B}(\mathbb{R}^m)} |\p{X \in A} - \p{Y \in A}|.
	\end{align*}
\end{prop}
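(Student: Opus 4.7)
The plan is to strengthen the conclusion to $L^1$-convergence of the densities: since for absolutely continuous laws $d_{TV}(\mu,\nu)=\tfrac12\|f_\mu-f_\nu\|_{L^1(\mathbb{R}^m)}$, it suffices to show that the density $f_{Y_n}$ of $Y_n$ converges in $L^1$ to the density $\phi(y):=(2\pi)^{-m/2}e^{-\|y\|^2/2}$ of $\mathcal N(0,I_m)$. By Scheff\'e's lemma this follows once I establish pointwise a.e.\ convergence $f_{Y_n}(y)\to\phi(y)$, because $\phi$ is a probability density.

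First I would write down $f_{Y_n}$ explicitly. Introduce the Voronoi cells $V_q:=\{x:\|x-\theta_q\|\le\|x-\theta_{q'}\|\text{ for all }q'\}$, so that $\theta_n^*=\theta_q$ precisely on the event $\{X_n\in V_q\}$ up to the Lebesgue-null Voronoi boundary. On this event $Y_n$ arises from the $C^\infty$ diffeomorphism $x\mapsto y=\Hess W(\theta_q)^{1/2}\sqrt n(x-\theta_q)$ with inverse $x_q(y):=\theta_q+n^{-1/2}\Hess W(\theta_q)^{-1/2}y$ and Jacobian $n^{-m/2}(\det\Hess W(\theta_q))^{-1/2}$. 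Hence, for every $y\in\mathbb{R}^m$,
\begin{align*}
f_{Y_n}(y)=\frac{1}{Z_n}\sum_{q=1}^{p}\frac{R(x_q(y))\,e^{-nW(x_q(y))}}{n^{m/2}\sqrt{\det\Hess W(\theta_q)}}\,\mathbf{1}_{V_q}\bigl(x_q(y)\bigr).
\end{align*}
Fix $y$. Since $x_q(y)\to\theta_q\in\inn V_q$, for $n$ large $\mathbf{1}_{V_q}(x_q(y))=1$ and $x_q(y)\in B_\e(\theta_q)$, with $\e$ as in the proof of Proposition~\ref{conv_discrete}. Normalising so that $W(\theta_1)=\cdots=W(\theta_p)=0$, a second-order Taylor expansion of $W\in C^2(B_\e(\theta_q))$ at $\theta_q$ (using $\nabla W(\theta_q)=0$) gives
\begin{align*}
nW(x_q(y))=\tfrac12\|y\|^2+n\rho_q(x_q(y)),\qquad |n\rho_q(x_q(y))|=O(\|y\|^3/\sqrt n),
\end{align*}
so $e^{-nW(x_q(y))}\to e^{-\|y\|^2/2}$, while continuity of $R$ at $\theta_q$ yields $R(x_q(y))\to R(\theta_q)$. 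Combined with the Laplace asymptotic $Z_n=n^{-m/2}(2\pi)^{m/2}\sum_{q'}R(\theta_{q'})(\det\Hess W(\theta_{q'}))^{-1/2}(1+o(1))$, which is exactly the estimate already produced inside the proof of Proposition~\ref{conv_discrete}, this implies
\begin{align*}
f_{Y_n}(y)\longrightarrow\frac{e^{-\|y\|^2/2}}{(2\pi)^{m/2}}\cdot\frac{\sum_{q}R(\theta_q)(\det\Hess W(\theta_q))^{-1/2}}{\sum_{q'}R(\theta_{q'})(\det\Hess W(\theta_{q'}))^{-1/2}}=\phi(y).
\end{align*}

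Finally, Scheff\'e's lemma upgrades this to $\|f_{Y_n}-\phi\|_{L^1}\to 0$, hence $d_{TV}(Y_n,N)\to 0$. The main obstacle is controlling the Taylor remainder uniformly in $y$ on the effective scale $\|x_q(y)-\theta_q\|=O(\|y\|/\sqrt n)$; this is routine thanks to $W\in C^2$ near each $\theta_q$ together with non-degeneracy of the Hessian. Two bookkeeping issues are harmless: the Voronoi boundary, on which $\theta_n^*$ is ambiguous, is a finite union of hyperplanes and hence Lebesgue-null; and for any fixed $y$ the event $\{x_q(y)\notin V_q\cap B_\e(\theta_q)\}$ is empty for large $n$. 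Both would also be absorbed automatically by Scheff\'e, which only needs pointwise almost-everywhere convergence.
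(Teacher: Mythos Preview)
Your approach via Scheff\'e's lemma is correct and genuinely different from the paper's. The paper works directly with the total variation supremum: for an arbitrary Borel set $A$ it decomposes $\lvert\P(Y_n\in A)-\P(N\in A)\rvert$ according to whether $X_n$ lies in small balls $B_\varepsilon(\theta_q)$, changes variables as you do, and then bounds the resulting integral uniformly in $A$ by dominated convergence, using explicit dominating functions of the form $R(\theta_q)\bigl((\sup_n d_n^{-1})e^{-y^tM_qy/2}+e^{-y^ty/2}\bigr)$ obtained from the Hessian lower bounds $\Hess W(x)\ge M_q>0$ on $B_\varepsilon(\theta_q)$. Your route avoids having to manufacture a dominating function and is shorter; the price is that the pointwise argument is less quantitative.

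One small correction: the remainder bound $\lvert n\rho_q(x_q(y))\rvert=O(\|y\|^3/\sqrt n)$ would require $W\in C^3$, whereas only $C^2$ is assumed. You do not need it. Using the Lagrange form of the remainder, $nW(x_q(y))=\tfrac12\,y^t\Hess W(\theta_q)^{-1/2}\Hess W(\xi_n)\Hess W(\theta_q)^{-1/2}y$ for some $\xi_n$ on the segment from $\theta_q$ to $x_q(y)$; since $\xi_n\to\theta_q$ and $\Hess W$ is continuous, this converges to $\tfrac12\|y\|^2$ for each fixed $y$, which is all Scheff\'e requires. Relatedly, your closing remark that ``the main obstacle is controlling the Taylor remainder uniformly in $y$'' is off target: Scheff\'e needs only pointwise a.e.\ convergence of densities, so no uniformity in $y$ is needed anywhere in your argument.
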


\begin{proof}
	Choose $\varepsilon > 0$ so that the conditions
	\begin{enumerate}
		\item $B_\varepsilon(\theta_q) \cap B_\varepsilon(\theta_{q'}) = \emptyset$ for $q \ne q'$,
		\item for all $x \in B_\e(\theta_q)$ we have $\Hess W(x) \ge M_q$ for some postive definite matrices $M_q$,
	\end{enumerate}
	are satisfied and let $A \subset \mathbb{R}^m$ be a Borel set. Then
	\begin{align*}
	&|\p{Y_n \in A} - \p{N \in A}|\le{} |\p{Y_n \in A, \|X_n - \theta_n^*\| < \varepsilon} - \p{N \in A}| + \p{\|X_n - \theta_n^*\| \ge \varepsilon}.
	\end{align*}
	By Proposition~\ref{conv_discrete} and Portmanteau's theorem, the last summand converges to zero. Set
	\begin{align*}
	D := \sum \limits_{q=1}^p R(\theta_q) \det \Hess W(\theta_q)^{-1/2}  \qquad \text{ and } \qquad d_n := D^{-1} \left(\frac{n}{2\pi}\right)^{m/2} \int e^{-nW(x)} R(x) \, dx.
	\end{align*}
	Then $d_n \to 1$ holds by calculations analogous to the ones used to prove Proposition~\ref{conv_discrete}. To simplify notation, define
	\begin{align*}
	f_n^q(y) &:= \exp(-n W(\theta_q + \Hess W(\theta_q)^{-1/2} n^{-1/2} y)) R(\theta_q + \Hess W(\theta_q)^{-1/2} n^{-1/2} y), \\
	S_n^q &:= \Hess W(\theta_q)^{1/2} n^{1/2} B_\e(0), \\
	K_n^q &:= \Hess W(\theta_q)^{-1/2} n^{-1/2} A
	\end{align*}
	and introduce the decomposition
	\begin{align}
	\begin{split}
	\exp(-y^ty/2)={}& D^{-1}\sum \limits_{q=1}^p R(\theta_q) \det \Hess W(\theta_q)^{-1/2} \exp(-y^ty/2) 1_{S_n^q}(y) \\
	&+ D^{-1} \sum \limits_{q=1}^p R(\theta_q) \det \Hess W(\theta_q)^{-1/2} \exp(-y^ty/2) 1_{\lb S_n^q\rb^{c}}(y).
	\end{split}\label{decomp_gauss}
	\end{align}
	Finally, note that 
	\begin{align*}
	\{Y_n \in A, \|X_n - \theta_n^*\|<\e\} = \bigcup \limits_{q=1}^p \{X_n - \theta_q \in K_n^q \cap B_\e(0)\}.
	\end{align*}
	Combining all this yields
	\begin{align*}
	&|\p{Y_n \in A, \|X_n - \theta_n^*\| < \varepsilon} - \p{N \in A}| \\
	={}& \left| \sum \limits_{q=1}^p \p{X_n - \theta_q \in K_n^q \cap B_\e(0)} - \p{N \in A}\right| \\
	={}&\left| \frac{n^{m/2}}{(2\pi)^{m/2}d_n D} \sum \limits_{q=1}^p \int\limits_{K_n^q \cap B_\e(0)} \exp(-n W(\theta_q + x)) R(\theta_q + x) \, dx - \frac{1}{(2\pi)^{m/2}} \int_A \exp(-x^tx/2) \, dx\right| \\
	={}&(2\pi)^{-m/2} \left|\int\limits_A \sum \limits_{q=1}^p d_n^{-1} D^{-1} f_n^q(y) 1_{S_n^q}(y) \det \Hess W(\theta_q)^{-1/2} - \exp(-y^ty/2) \, dy\right|.
	\end{align*}
	By the decomposition \eqref{decomp_gauss} we therefore obtain
	\begin{align*}
	&|\p{Y_n \in A, \|X_n - \theta_n^*\| < \varepsilon} - \p{N \in A}| \\
	\le{}&(2\pi)^{-m/2} \sum \limits_{q=1}^p D^{-1} \det \Hess W(\theta_q)^{-1/2}\left|\int_A \big(d_n^{-1} f_n^q(y) - \exp(-y^ty/2) R(\theta_q)\big) 1_{S_n^q}(y)\, dy\right| \\
	&+ D^{-1} \sum \limits_{q=1}^p R(\theta_q) \det \Hess W(\theta_q)^{-1/2} P(N \notin S_n^q) \\
	\le{}&(2\pi)^{-m/2} \sum \limits_{q=1}^p D^{-1} \det \Hess W(\theta_q)^{-1/2}\int_{S_n^q} \big|d_n^{-1} f_n^q(y) - \exp(-y^ty/2) R(\theta_q)\big|\, dy \\
	&+ D^{-1} \sum \limits_{q=1}^p R(\theta_q) \det \Hess W(\theta_q)^{-1/2} \p{N \notin S_n^q}.
	\end{align*}
	Note that this bound is uniform in $A$. Clearly $\p{N \notin S_n^q}\to0$ as $n\to\infty$. On $S_n^q$ the pointwise convergence
	\begin{align*}
	d_n^{-1} f_n^q(y) \to \exp(-y^ty/2) R(\theta_q)
	\end{align*}
	holds and the claimed result follows from the dominated convergence theorem, using the dominating functions
	\begin{align*}
	R(\theta_q) \left(\left(\sup \limits_{n} d_n^{-1}\right) \exp(-y^t M_q y/2) + \exp(-y^t y/2)\right),\qquad q=1,\dots,p.
	\end{align*}
\end{proof}

\begin{proof}[Proof of Theorem~\ref{general_dist_clt}]
	After a change of coordinates by Corollary~\ref{corollary_density}, an application of Proposition~\ref{conv_normal} yields a central limit theorem for the canonical coordinates,
    \begin{align*}
        \sqrt{n}(\operatorname{Hess^\C}\sum \limits_{j = 1}^l W_j)^{1/2}(\mathbf{y}_l^*)(\mathbf{y}_l^{\mathcal{C},(n)} - \mathbf{y}_{l}^*) \xrightarrow{d} \mathcal{N}(0, I_{l - k})\,,
    \end{align*}
    where the first $i_k - k$ coordinates of $\mathbf{y}_l^*$ are given by
    \begin{align*}
        \argmin \limits_{y \in \{\mathbf{y}^{*, 1}, \ldots, \mathbf{y}^{*, p}\}} \|\mathbf{y}_l^{\mathcal{C}, (n)} - y\|
    \end{align*}
    and the remaining coordinates for $j>i_k$ are
    \begin{align*}
        (\mathbf{y}_l^*)_j =
        \begin{cases}
            y_1^* &, j \text{ odd} \\
            y_2^* &, j \text{ even} \\
        \end{cases}\,.
    \end{align*}
    To transfer this CLT for $\mb y_l^{\C,(n)}$ to one for $\mb m_l^{\C,(n)}$, we use a variant of the delta-method, which is a common technique in mathematical statistics:

Let 	 $(X_n)_n$, $(Y_n)_n$ be 	two sequences of random variables with $X_n, Y_n\in\R^m$ and $\sqrt{n}(X_n-Y_n)\to N\sim \mathcal{N}(0,\Sigma)$ in distribution as $n\to\infty$. Let  $f:\R^m\to\R^{m}$ be a continuously differentiable function. Assume moreover that $Y_n$ is almost surely contained in an $n$-independent set $B$ on which $y\mapsto Df(y)$ is uniformly continuous and $\|Df(y)\|_{op}\geq c>0$ for some $c$ and $y\in B$, $\|\cdot\|_{op}$ denoting the operator norm. Then $\sqrt nDf(Y_n)^{-1}(f(X_n)-f(Y_n))$ is asymptotically $\mathcal{N}(0,\Sigma)$-distributed. To see this, let $f_j$ denote the $j$-th component of the vector-valued $f$, $j=1,\dots,m$ and note that by the mean value theorem,
\begin{align*}
\sqrt n(f_j(X_n)-f_j(Y_n))=\langle\nabla f_j(\xi_{j,n}),\sqrt n(X_n-Y_n)\rangle
\end{align*}
		for some $\xi_{j,n}$ in the line segment $[X_n,Y_n]$. Since $\sqrt n(X_n-Y_n)$ is asymptotically normal, we have $\xi_{j,n}-Y_n\to0$ in probability. Let $M_n$ be the matrix built from the rows $\nabla f_j(\xi_{j,n}),\,j=1,\dots,m$. By the uniform continuity, $Df(M_n)-Df(Y_n)\to0$ in probability and the asymptotic normality of $\sqrt nDf(Y_n)^{-1}(f(X_n)-f(Y_n))$ follows from $\|Df^{-1}\|_{op}\leq c^{-1}$ on $B$ and Slutsky's theorem.
		
		In our case, $X_n:=\mb y_l^{\C,(n)}$, $Y_n:=\mb y_l^*$ and $f:=\varphi_l^{E,\C}$. The assumptions on $Df$ are easily verified for $B$ being the set of the $p$ possible values of $\mb y_l^*$.  There is a small subtlety to consider: As the minimizers $\mb y_l^*$ and $\m_l^*$ are determined via $\mb y_l^{\C,(n)}$ and $\m_l^{(n)}$ on different spaces, not necessarily $\varphi_l^{E,\C}(\mb y_l^*)=\m_l^*$ holds. However, by the continuity of $\varphi_l^{E, \mathcal{C}}$ the minimizers are identical whenever $\mathbf{m}_l^{(n)}$ is in a sufficiently small neighborhood $U$ of $\{\mathbf{m}_{l}^{*, 1}, \ldots, \mathbf{m}_{l}^{*, p}\}$. Due to Theorem~\ref{general_lln} we have $\mathbb{P}(\mathbf{m}_{l}^{(n)} \in U) \to 1$, which implies the stated result.

\end{proof}
Let us now turn to moderate deviations. The general result is

    \begin{prop}[Moderate Deviations]\label{mdp}
	With the assumptions and notation of Proposition~\ref{conv_normal} for any sequence $a_n \to \infty$ with $a_n = o(\sqrt{n})$, the sequence of random variables
	\begin{align*}
	Z_n := \Hess W(\theta_n^*)^{1/2} a_n (X_n - \theta_n^*)
	\end{align*}
	satisfies with $b_n := \frac{n}{a_n^2}$
	\begin{align*}
	\lim \limits_{n \to \infty} \frac{1}{b_n} \log \mathbb{P}(Z_n \in \Gamma) = -\essinf \limits_{x \in \Gamma} \frac{1}{2} \|x\|_2^2
	\end{align*}
	for any measurable set $\Gamma$.
\end{prop}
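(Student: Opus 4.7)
The plan is to adapt the strategy of Proposition \ref{ldp} to the moderate scale $b_n = n/a_n^2$, exploiting the quadratic behavior of $W$ near each global minimizer. The key structural observation is that once we rescale around a minimizer $\theta_q$ at the scale $a_n^{-1}$, Taylor expansion of $W$ produces a quadratic in the rescaled variable whose coefficient interacts with $n$ to yield exactly $b_n$, while the Taylor remainder is negligible on the $b_n$ scale.

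First, I would localize. Pick $\e>0$ small so that the balls $B_\e(\theta_q)$ are pairwise disjoint and lie in neighborhoods where $W\in C^2$ with $\Hess W(x)\geq M_q$ for some positive definite $M_q$ (as in the proof of Proposition~\ref{conv_discrete}). Proposition \ref{ldp} applied to the complement gives $\mathbb P_n\lb\bigcup_q B_\e(\theta_q)^c\rb\leq e^{-cn}$ for some $c>0$; since $b_n=o(n)$, this contribution is $o(e^{-b_n M})$ for any $M$ and hence cannot influence the MDP. Thus it suffices to compute, for each $q$,
\begin{align*}
\mathbb P(Z_n\in\Gamma,\,X_n\in B_\e(\theta_q)) = \frac{a_n^{-m}(\det H_q)^{-1/2}}{Z_n^{\text{norm}}}\int_{\Gamma_n^q} e^{-nW(\theta_q+a_n^{-1}H_q^{-1/2}z)}R(\theta_q+a_n^{-1}H_q^{-1/2}z)\,dz\,,
\end{align*}
where $H_q:=\Hess W(\theta_q)$ and $\Gamma_n^q:=\Gamma\cap a_n H_q^{1/2}B_\e(0)$, using the change of variables $z = H_q^{1/2}a_n(x-\theta_q)$.

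Next, I would Taylor expand. Since $\nabla W(\theta_q)=0$ and $W$ is $C^2$ near $\theta_q$,
\begin{align*}
nW\bigl(\theta_q+a_n^{-1}H_q^{-1/2}z\bigr)=nW(\theta_q)+\tfrac{b_n}{2}\|z\|^2+b_n\cdot\eta_n(z)\,,
\end{align*}
where $\eta_n(z)=o(\|z\|^2)$ uniformly for $\|z\|$ bounded, since $a_n^{-1}\|H_q^{-1/2}z\|\to 0$. Combined with continuity of $R$ at $\theta_q$, a routine modification of the proof of Proposition~\ref{ldp} (replacing $W$ by $\tfrac{1}{2}\|z\|^2$ and $n$ by $b_n$) then gives the local essinf formula
\begin{align*}
\lim_{n\to\infty}\frac{1}{b_n}\log \mathbb P(Z_n\in\Gamma,\,X_n\in B_\e(\theta_q))=-\essinf_{z\in\Gamma}\tfrac{1}{2}\|z\|^2\,,
\end{align*}
the prefactors $a_n^{-m}$, $(\det H_q)^{-1/2}$ and $Z_n^{\text{norm}}$ contributing only $O(\log n)$, which is $o(b_n)$. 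Summing the $p$ local contributions (the essinf does not depend on $q$) and combining with Step~1 gives the claim.

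The principal technical obstacle is controlling the Taylor remainder $\eta_n(z)$ when $\Gamma$ is unbounded: uniform smallness of $\eta_n$ only holds on compact sets of $z$, while the change of variables extends $z$ through $\|z\|\leq \e a_n\to\infty$. To handle this, I would split $\Gamma$ into a large ball $\{\|z\|\leq K\}$ and its complement. On the large ball, the uniform expansion applies directly. On the complement, the condition $\Hess W\geq M_q$ on $B_\e(\theta_q)$ yields $W(x)-W(\theta_q)\geq c\|x-\theta_q\|^2$, hence $n(W(x)-W(\theta_q))\geq c\,b_n\|z\|^2$, so the tail integral is bounded by $\int_{\|z\|>K}e^{-cb_n\|z\|^2}\,dz$ up to polynomial factors. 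Letting $K\to\infty$ last makes this outer piece negligible relative to any fixed essinf level, yielding the desired matching upper and lower bounds.
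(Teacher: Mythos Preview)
Your approach is essentially the paper's: localize to neighborhoods of the minimizers, use the quadratic behavior of $W$ there, show the complement is negligible at speed $n\gg b_n$, and handle the normalization. The paper packages the quadratic control via two-sided bounds $\lambda_q^-\tfrac{1}{2}\|H_q^{1/2}(x-\theta_q)\|^2\le W(x)\le\lambda_q^+\tfrac{1}{2}\|H_q^{1/2}(x-\theta_q)\|^2$ on an ellipsoid $S_\e^q$ and lets $\e\to 0$ (so $\lambda_q^\pm\to1$), which avoids your split into $\{\|z\|\le K\}$ and its complement; but these are equivalent devices.

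There is one genuine slip. You assert that the prefactors $a_n^{-m}$, $(\det H_q)^{-1/2}$ and $Z_n^{\text{norm}}$ contribute ``only $O(\log n)$, which is $o(b_n)$''. The implication $O(\log n)=o(b_n)$ is false in general: for $a_n=\sqrt{n/\log n}$ one has $a_n=o(\sqrt n)$ yet $b_n=\log n$, so $\log n/b_n\equiv 1$. In fact the individual prefactors need not be $o(b_n)$; e.g.\ $\frac{1}{b_n}\log a_n^{-m}$ can converge to a nonzero constant. What saves you is cancellation: Laplace's method (as in Proposition~\ref{conv_discrete}) gives $Z_n^{\text{norm}}\asymp n^{-m/2}$, hence $a_n^{-m}/Z_n^{\text{norm}}\asymp b_n^{m/2}$, whose logarithm is $\tfrac{m}{2}\log b_n=o(b_n)$. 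This is exactly what the paper proves as a separate statement, namely $\lim_{n\to\infty}\tfrac{1}{b_n}\log\int a_n^m e^{-nW}R=0$, and you should make the cancellation explicit rather than treat each factor in isolation. Also note for the lower bound that $R(\theta_q)>0$ is guaranteed only for \emph{some} $q$ (condition~(5) of Proposition~\ref{conv_discrete}); since the essinf is independent of $q$, the maximum over $q$ still delivers the right lower bound, but this should be said.
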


\begin{proof} Let $S_\e^q$ denote the ellipsoid
	\begin{align}
	S_\e^q := \{x \in \mathbb{R}^m \mid \|\Hess W(\theta_q)^{1/2}(x - \theta_q)\| < \varepsilon\} = \theta_q + \Hess W(\theta_q)^{-1/2} B_\e(0).\label{def_S_e}
	\end{align}
	As in the proof of Proposition~\ref{conv_normal}, let $0 < \varepsilon < 1$ be so small that the following conditions hold:
	\begin{enumerate}
		\item $S_\varepsilon^q \cap S_\e^{q'} = \emptyset$ for $q \ne q'$,
		\item $\theta_q = \argmin \limits_{y \in S_\e^q} W(y)$ holds for all $1 \le q \le p$,
		\item for all $x \in S_\e^q$ we have $\Hess W(x) \ge M_q$ for some postive definite matrices $M_q$,
		\item there is a constant $K > 0$ such that $R(x) \le K$ holds for all $x \in S_\varepsilon := \bigcup \limits_{q=1}^p S_\e^q$,
		\item $\inf \limits_{y \in S_\e^q} R(y) > 0$ holds for some $1 \le q \le p$.
	\end{enumerate}
	For an arbitrary Borel set $\Gamma$ we use the decomposition
	\begin{align}
	\p{Z_n \in \Gamma} ={}& \sum \limits_{q=1}^p \p{\Hess W(\theta_q)^{1/2} a_n (X_n - \theta_q) \in \Gamma, X_n \in S_\e^q} \notag\\
	&+\p{Z_n \in \Gamma, X_n \notin S_\varepsilon}.\label{prob_decomposition}
	\end{align}
	Observing that
	\begin{align}
	\max \limits_{j = 1}^N \frac{1}{b_n} \log A_j \le \frac{1}{b_n} \log \left(\sum \limits_{j = 1}^N A_j\right) \le \max \limits_{j = 1}^N \frac{1}{b_n} \log A_j + \frac{\log N}{b_n}, \label{log_inequality}
	\end{align}
	holds for any $A_1, \ldots, A_N \ge 0$, we see that only the largest of the probabilities in \eqref{prob_decomposition} matter.
    As will become apparent in the course of the proof, the normalization constant of the density $\exp(-nW(x)) R(x)$ satisfies
    \begin{align}
        \lim \limits_{n \to \infty} \frac{1}{b_n} \log \int a_n^m e^{-nW(x)} R(x) \, dx = 0,\label{mdp_normalization}
    \end{align}
    which means that on the exponential scale of a large deviations principle, the integration constant roughly equals $a_n^{-m}$. In order to deal with $\p{\Hess W(\theta_q)^{1/2} a_n (X_n - \theta_q) \in \Gamma, X_n \in S_\e^q}$, we are therefore interested in approximations for the term
    \begin{align}
        a_n^m \int_{S^q_\e} 1_\Gamma(\Hess W(\theta_q) a_n(x - \theta_q)) \exp(-nW(x)) R(x) \, dx \label{mdp_limit_scaled}\,.
    \end{align}
	Set $\alpha := \essinf \limits_{x \in \Gamma} \|x\|$, then we eventually want to prove $\lim \limits_{n \to \infty} \frac{1}{b_n} \log \p{Z_n \in \Gamma} = -\alpha^2/2$. The case $\alpha = \infty$ corresponds to a nullset and the claimed result obviously holds. In the case $\alpha < \infty$ we start with an upper bound of \eqref{mdp_limit_scaled}. Define
	\begin{align*}
	\lambda_q^- &:= \lambda_q^-(\varepsilon) := \inf \limits_{y \in S_\e^q} \lambda_{\text{min}} \Big(\Hess W(\theta_q)^{-1/2} \Hess W(y) \Hess W(\theta_q)^{-1/2}\Big) \\
	\lambda_q^+ &:= \lambda_q^+(\varepsilon) := \sup \limits_{y \in S_\e^q} \lambda_{\text{max}} \Big(\Hess W(\theta_q)^{-1/2} \Hess W(y) \Hess W(\theta_q)^{-1/2}\Big),
	\end{align*}
	where $\lambda_{\text{min}}(M)$ and $\lambda_{\text{max}}(M)$ denote the smallest and largest eigenvalue of a symmetric square matrix $M$, respectively. By continuity $\lambda_q^-$ and $\lambda_q^+$ both converge to 1 as $\varepsilon \to 0$ and by Taylor's theorem we have for all $x \in S_\e^q$
	\begin{align*}
	&W(x)=\\
	&\frac{1}{2}(\Hess W(\theta_q)^{1/2} x)^t \big( \Hess W(\theta_q)^{-1/2} \Hess W(\theta_q + \eta(x - \theta_q)) \Hess W(\theta_q)^{-1/2}\big) (\Hess W(\theta_q)^{1/2} x)
	\end{align*}
	for some $\eta = \eta(x) \in (0, 1)$, where we used our assumption $W(\theta_q)=0$. Since $x \in S_\e^q$ we have $x = \theta_q + \Hess W(\theta_q)^{-1/2} v$ for some vector $v$ with $\|v\| \le \varepsilon$. Therefore
	\begin{align*}
	\theta_q + \eta (x - \theta_q) = \theta_q + \Hess W(\theta_q) ^{-1/2} (\eta v) \in S_\e^q
	\end{align*}
	and we can conclude 
	\begin{align*}
	\lambda_q^-(\varepsilon) \frac{\|\Hess W(\theta_q)^{1/2}(x-\theta_q)\|^2}{2} \le W(x) \le \lambda_q^+(\varepsilon)\frac{\|\Hess W(\theta_q)^{1/2}(x-\theta_q)\|^2}{2}.
	\end{align*}
	This yields with $K$ from (4)
	\begin{align*}
	&a_n^m \int_{S_\e^q}1_\Gamma(\Hess W(\theta_q)^{1/2} a_n (x - \theta_q)) \exp(-nW(x)) R(x) \, dx \\
	\le{}&a_n^m\int 1_\Gamma(\Hess W(\theta_q)^{1/2} a_n (x - \theta_q)) \exp\big({-}n \lambda_q^{-}\|\Hess W(\theta_q)^{1/2}(x - \theta_q)\|^2/2\big) K \, dx \\
	={}& \frac{a_n^m K}{n^{m/2} \sqrt{\det \Hess W(\theta_q)}} \int 1_\Gamma(a_n t/\sqrt{n}) \exp\big({-}\lambda_q^-\|t\|^2/2\big) \, dt \\
	\le{}& \frac{b_n^{-m/2} K}{\sqrt{\det \Hess W(\theta_q)}} \int 1_{[n\alpha^2 /(2a_n^2),\infty)}(\|t\|^2/2) \exp\Big({-} (1 - \varepsilon)\lambda_q^-n \alpha^2/(2a_n^2) - \varepsilon \lambda_q^-\|t\|^2/2\Big) \, dt \\
	\le{}& \frac{b_n^{-m/2} K}{\sqrt{\det \Hess W(\theta_q)}} \exp\Big({-} (1 - \varepsilon)\lambda_q^-b_n\alpha^2/2\Big) \left(\frac{2\pi}{\varepsilon \lambda_q^-}\right)^{m/2}.
	\end{align*}
	Consequently, it follows
	\begin{align*}
	&\limsup\limits_{n \to \infty} \frac{1}{b_n} \log \left(a_n^m \int_{S_\e^q}1_\Gamma(\Hess W(\theta_q)^{1/2} a_n (x - \theta_q)) e^{-nW(x)} R(x) \, dx\right) \\
	\le{}& -(1-\varepsilon) \lambda_q^- \frac{\alpha^2}{2}. \labelonce{moderate_1}
	\end{align*}
	To find a lower bound of \eqref{mdp_limit_scaled}, let $n$ be so large that $\frac{\alpha + \varepsilon}{a_n} < \varepsilon$ holds. Then
	\begin{align*}
	&a_n^m \int_{S_\e^q}1_\Gamma(\Hess W(\theta_q)^{1/2} a_n (x - \theta_q)) e^{-nW(x)} R(x) \, dx \\
	\ge{}&a_n^m\inf \limits_{y \in S_\e^q} R(y) \int 1_\Gamma(\Hess W(\theta_q)^{1/2} a_n x)1_{[0,\e)}(\|\Hess W(\theta_q)^{1/2}x\|) e^{{-}n\lambda_q^+\|\Hess W(\theta_q)^{1/2}x\|^2/2} \, dx \\
	\ge{}& \inf \limits_{y \in S_\e^q} R(y) \frac{b_n^{-m/2}}{\sqrt{\det \Hess W(\theta_q)}} \int 1_\Gamma(a_n t/\sqrt{n})1_{[0,\a+\e)}(\|a_n t/\sqrt{n}\|) \exp\big({-}\lambda_q^+\|t\|^2/2\big) \, dt \\
	\ge{}&\inf \limits_{y \in S_\e^q} R(y) \frac{b_n^{-m/2}}{\sqrt{\det \Hess W(\theta_q)}} \exp\big({-}\lambda_q^+ n(\alpha + \varepsilon)^2/(2a_n^2)\big)  \cdot \text{vol}_m \Big(\sqrt{b_n} \cdot \big(\Gamma \cap B_{\alpha + \varepsilon}(0)\big)\Big) \\
	={}&\inf \limits_{y \in S_\e^q} R(y) \big(\det \Hess W(\theta_q)\big)^{-1/2} \exp\big({-}\lambda_q^+ b_n (\alpha + \varepsilon)^2/2\big)  \cdot \text{vol}_m \Big(\Gamma \cap B_{\alpha + \varepsilon}(0)\Big).
	\end{align*}
	By definition of $\alpha$, $\Gamma \cap B_{\alpha + \varepsilon}(0)$ has positive volume. Therefore we get 
	\begin{align}
	\liminf\limits_{n \to \infty} \frac{1}{b_n} \log \left(a_n^m \int _{S_\e^q}1_\Gamma(\Hess W(\theta_q)^{1/2} a_n (x - \theta_q)) e^{-nW(x)} R(x) \, dx\right)
	\ge - \lambda_q^+ \frac{(\alpha + \varepsilon)^2}{2}, \label{moderate_2}
	\end{align}
	if $\inf \limits_{y \in S_\e^q} R(y)>0$ and $-\infty$ else. 
	
	For the term $\p{Z_n \in \Gamma, X_n \notin S_\varepsilon}$ in \eqref{prob_decomposition} we obtain
	\begin{align*}
	&\limsup \limits_{n \to \infty} \frac{1}{b_n} \log \left( a_n^m \int_{(S_\varepsilon)^c} e^{-nW(x)} R(x) \, dx\right) \\
	\le{}&\limsup \limits_{n \to \infty} \frac{1}{b_n} \log \left(a_n^m \int \exp(-n_0 W(x)) R(x) \, dx \exp\Big({-}(n-n_0) \inf \limits_{y \notin S_\varepsilon} W(y)\Big)\right),
	\end{align*}
	where $n_0$ is the fixed number from \eqref{Laplace_assumption}. Thus
	\begin{align*}
	&\limsup \limits_{n \to \infty} \frac{1}{b_n} \log \left( a_n^m \int_{(S_\varepsilon)^c} e^{-nW(x)} R(x) \, dx\right) \\
	\le{}& \limsup \limits_{n \to \infty} \frac{a_n^2}{n} \left(\log(a_n^m) - (n - n_0) \inf \limits_{y \notin S_\varepsilon} W(y)\right) \\
	\le{}& \limsup \limits_{n \to \infty} \frac{a_n^2\big(m\log(a_n) - n/2 \inf \limits_{y \notin S_\varepsilon} W(y)\big)}{n} + \limsup \limits_{n \to \infty} -a_n^2 (1 - n_0/n) \inf \limits_{y \notin S_\varepsilon} W(y) = -\infty. \labelonce{moderate_3}
	\end{align*}
	In order to see the last equality, note that the first term is negative for sufficiently large $n$ and the second term diverges to $-\infty$. In view of \eqref{log_inequality} we conclude that $\p{Z_n \in \Gamma, X_n \notin S_\varepsilon}$ is negligible.

It remains to show \eqref{mdp_normalization}.	Using \eqref{log_inequality} in combination with \eqref{moderate_1} and \eqref{moderate_3} for $\Gamma = \mathbb{R}^m$ yields
	\begin{align*}
	&\limsup \limits_{n \to \infty} \frac{1}{b_n} \log \int a_n^m e^{-nW(x)} R(x) \, dx \\
	={}& \max \bigg\{\Big(\max \limits_{q=1}^p \limsup\limits_{n \to \infty} \frac{1}{b_n} \log \int_{S_\e^q} a_n^m e^{-nW(x)} R(x) \, dx\Big), \\
	&\qquad \qquad \limsup \limits_{n \to \infty} \frac{1}{b_n} \log\int_{(S_\varepsilon)^c}  a_n^me^{-nW(x)} R(x) \, dx\bigg\} \le 0.
	\end{align*}
	On the other hand, we obtain from \eqref{moderate_2} and \eqref{log_inequality} that
	\begin{align*}
	&\liminf \limits_{n \to \infty} \frac{1}{b_n} \log \int a_n^m e^{-nW(x)} R(x) \, dx \ge \max \limits_{q=1}^p \liminf \limits_{n \to \infty}  \frac{1}{b_n} \log \int_{S_\e^q} e^{-nV(x)} R(x) \, dx \\
	\ge{}& \max \limits_{q=1}^p \Big\{-\lambda_q^+ \frac{\varepsilon^2}{2}\Big\}.
	\end{align*}
	Letting $\varepsilon \to 0$, \eqref{mdp_normalization} follows.
	Combining \eqref{moderate_1}, \eqref{log_inequality} and \eqref{mdp_normalization} gives
	\begin{align*}
	&\limsup \limits_{n \to \infty} \frac{1}{b_n} \log \p{Z_n \in \Gamma} \le \limsup \limits_{n \to \infty} \frac{1}{b_n} \log \left(\p{X_n \notin S_\varepsilon} + \sum \limits_{q=1}^p \p{Z_n \in \Gamma, X_n \in S_\e^q}\right)\\
	\le{}&\limsup \limits_{n \to \infty}\; \max\bigg\{\Big(\max \limits_{q=1}^p \frac{1}{b_n} \log \int_{S_\e^q} a_n^m 1_\Gamma(\Hess W(\theta_q)^{1/2} a_n (x - \theta_q)) e^{-nW(x)} R(x) \, dx\Big), \\
	& \qquad \qquad  \frac{1}{b_n} \log \int_{(S_\e)^c} a_n^m e^{-nW(x)} R(x) \, dx\bigg\} \\
	&+ \limsup \limits_{n \to \infty} \frac{\log(p + 1)}{b_n} + \limsup \limits_{n \to \infty}\Big\{- \frac{1}{b_n} \log \int a_n^m e^{-nW(x)} R(x) \, dx\Big\} \\
	\le{}& \max \limits_{q=1}^p\bigg\{ -(1 - \varepsilon)\lambda_q^- \frac{\alpha^2}{2}\bigg\}.
	\end{align*}
	Letting $\varepsilon \to 0$ we now get
	\begin{align}
	\limsup \limits_{n \to \infty} \frac{1}{b_n} \log \p{Z_n \in \Gamma} \le -\frac{\alpha^2}{2}. \label{moderate_upper}
	\end{align}
	Similarly, \eqref{moderate_2}, \eqref{log_inequality} and \eqref{mdp_normalization} yield
	\begin{align*}
	&\liminf \limits_{n \to \infty} \frac{1}{b_n} \log \p{Z_n \in \Gamma} \ge \liminf \limits_{n \to \infty} \frac{1}{b_n} \log \left(\sum \limits_{j = 1}^n \p{Z_n \in \Gamma, X_n \in S_\e^q}\right) \\
	\ge{}& \max \limits_{q=1}^p \liminf \limits_{n \to \infty} \frac{1}{b_n} \log \int_{S_\e^q} a_n^m 1_\Gamma(\Hess W(\theta_q)^{1/2} a_n (x - \theta_q)) e^{-nW(x)} R(x) \, dx \\
	\ge{}& \max \limits_{q=1}^p \Big\{-\lambda_q^+ \frac{(\alpha + \varepsilon)^2}{2}\Big\}.
	\end{align*}
	Letting $\varepsilon \to 0$ we thus obtain
	\begin{align}
	\liminf \limits_{n \to \infty} \frac{1}{b_n} \log \p{Z_n \in \Gamma} \ge -\frac{\alpha^2}{2}. \label{moderate_lower}
	\end{align}
	Combining \eqref{moderate_upper} and \eqref{moderate_lower} now finally shows
	\begin{align*}
	\lim \limits_{n \to \infty} \frac{1}{b_n} \log \p{Z_n \in \Gamma} = -\frac{\alpha^2}{2} = -\frac{1}{2} \Big(\essinf \limits_{x \in \Gamma} \|x\|\Big)^2 = -\essinf \limits_{x \in \Gamma} \frac{\|x\|^2}{2}.
	\end{align*}
\end{proof}

\begin{proof}[Proof of Theorem~\ref{general_dist_mdp}]
	After a change of coordinates by Corollary~\ref{corollary_density}, an application of Proposition~\ref{mdp} and Lemma \ref{exact_ldp} yields the MDP for
	\begin{align*}
	Y_n:=a_nH(\mb y^*_l) \cdot (\mb y_l^{\C,(n)}-\mb y^*_l)
	\end{align*} 
	with good rate function $I(x)=-\frac12 \|x\|^2$, speed $b_n = \frac{n}{a_n^2}$ and
    \begin{align*}
        H(\mb y_l) := \Big(\Hess^\C\sum \limits_{j = 1}^l W_j\Big)^{1/2}(\mb y_l)\,.
    \end{align*}
    To transfer this to an MDP for the ordinary moments, we first argue that we have the MDP with same rate function and same speed for $Y_n$ under the conditioned probability measures
\begin{align*}
\P^q:=\P(\cdot|\mb y^*_l=\mb y^*_{l,q}),\quad q=1,\dots,p,
\end{align*}
where $\mb y^*_{l,1},\dots,\mb y^*_{l,p}$ are the $p$ possible values of $\mb y^*_l$ and $\P$ denotes the underlying probability measure. To see this, note that under $\P^q$ we have
\begin{align*}
Y_n=Y_n^q:=a_n H(\mb y^*_{l, q}) (\mb y_l^{\C,(n)}-\mb y^*_{l,q}) = a_n \Big( H(\mb y^*_{l, q}) \mb y_l^{\C,(n)} - H(\mb y^*_{l, q})\mb y^*_{l,q}\Big).
\end{align*}
By Theorem \ref{general_lln} there are $0<c_1<c_2$ such that for $n$ large enough and all $q=1,\dots,p$
\begin{align*}
c_1\leq\P(\mb y^*_l=\mb y^*_{l,q})\leq c_2.
\end{align*}
The upper bound in \eqref{def_LDP} for $Y_n^q$  follows for $n$ large from 
\begin{align*}
\P^q(Y_n^q\in\Gamma)\leq c_1^{-1}\P(Y_n\in\Gamma).
\end{align*}
For the lower bound, for $n$ large enough
\begin{align*}
\P^q(Y_n^q\in\Gamma)\geq c_2^{-1}\,\P(Y_n\in\Gamma,\,Y_n\in S_\e^q)
\end{align*}
with $S_\e^q$ from \eqref{def_S_e} and $\e$ chosen as in the proof of Proposition \ref{mdp}. It was shown in \eqref{prob_decomposition}, \eqref{mdp_normalization} and \eqref{moderate_2} that
\begin{align*}
\liminf_{n\to\infty}\frac1{b_n}\log\P(Y_n\in\Gamma,\,Y_n\in S_\e^q)=-\frac{(\essinf_{x\in\Gamma} \|x\|^2+\e)^2}2
\end{align*}
and the lower bound follows from letting $\e\to0$ and Lemma \ref{exact_ldp}.

Our aim is to transfer the MDPs for $Y_n^q,\,q=1,\dots,p$ to one for $Z_n$ from Theorem \ref{general_dist_mdp}. Choosing
\begin{align*}
    f^q(x): = H(\mb y^*_{l, q})\Big[\frac{\partial \mb y_l^\C}{\partial \mb m_l^\C}(\mb y^*_{l, q})\Big]^t\varphi_l^{E, \C}\Big((H(\mb y^*_{l, q}))^{-1}\cdot x\Big)
\end{align*}
we can apply the delta-method for large deviations \cite[Theorem~3.1]{gaozhao2011} to obtain an LDP for the sequence
\begin{align*}
    Z_n^q:={}&a_n\Big(f^q\big(H(\mb y^*_{l, q}) \mb y_l^{\C,(n)}\big) -  f^q\big(H(\mb y^*_{l, q}) \mb y^*_{l,q}\big)\Big) \\
    ={}& a_n H(\mb y^*_{l, q})\Big[\frac{\partial \mb y_l^\C}{\partial \mb m_l^\C}(\mb y^*_{l, q})\Big]^t \Big(\mathbf{m}_l^{\C, (n)} - \varphi_l^{E, \C}(\mb y_{l, q}^*)\Big)
\end{align*}
under $\P^q$, with good rate function $I$ and speed $b_n$. The next step is to extend the MDP to
\begin{align*}
    Z_n' := a_n H(\mb y^*_{l})\Big[\frac{\partial \mb y_l^\C}{\partial \mb m_l^\C}(\mb y^*_{l})\Big]^t \Big(\mathbf{m}_l^{\C, (n)} - \varphi_l^{E, \C}(\mb y_{l}^*)\Big)
\end{align*}
under $\P$. In order to show this LDP, we estimate for an arbitrary measurable set $\Gamma$
\begin{align*}
c_1\sum \limits_{q = 1}^p\P^q(Z_n^q \in \Gamma)\leq\P\Big(Z_n' \in \Gamma\Big)
\leq c_2\sum \limits_{q = 1}^p\P^q(Z_n^q \in \Gamma)\,.
\end{align*}
Therefore the LDP for $Z_n'$ holds by~\eqref{log_inequality}. The last step in the proof is to augment $Z_n'$ to $Z_n$ by inserting zeros at at positions $i_1,\dots,i_k$. Let $T$ denote this operation. The MDP for the sequence $Z_n$ then follows from observing that by Theorem~\ref{general_dist_ldp} we have 
\begin{align*}
    \limsup \limits_{n \to \infty} \frac{1}{b_n} \log \P(Z_n \ne T(Z_n')) \le \limsup \limits_{n \to \infty} \frac{1}{b_n} \log \P(\varphi_l^{E, \C}(\mb y_l^*) \ne \mb m_l^{*, \C}) = -\infty\,,
\end{align*}
where $\mb m_l^{*, \C}$ denotes the vector of unconstrained moments in $\m_l^*$. This proves the asymptotic equivalence of $T(Z_n')$ and $Z_n$ and concludes the proof.

\end{proof}

    % Section
    
    % Section

    % Section
\section{Proofs of the results in the uniform case}\label{sec_application_uniform}
    
    The uniform distribution on the moment space $\mathcal{M}_n^{\mc C}([0, 1])$ is a special case of the distribution $\mathbb{P}_{n, [0,1], V}^{\mc C}$ defined in Section~\ref{sec_general_dist}. It is however difficult to prove genericity of $V_1=\dots=V_{i_k+2}\equiv0$ for \textit{any} constraint $\C$ instead of just generic constraints. We will therefore consider a special parametrization of $\mathcal{M}_n^{\mc C}([0, 1])$ that allows to exploit concavity properties of the density of the canonical moments.

    \begin{lemma}\label{coordinates_uniform}
        Recall the definition of $\m_{i_k}^\C$ in \eqref{m_n^C} and denote by $\tilde{\mathcal{M}}_l^{\mc C}([0, 1])$ the projection of $\mathcal{M}_{l}^{\mc C}([0, 1])$ to the coordinates in $\mathbf{m}_{l}^{\mc C}$. For any $n \ge i_k$ the mapping
        \begin{align}
            \varphi_n^{\mc C} :\left\{
            \begin{array}{ccc}
                \inn \tilde{\mathcal{M}}_{i_k}^{\mc C} \times (0, 1)^{n - i_k}  & \to       & \inn \tilde{\mathcal{M}}_n^{\mc C}(E)       \\
                \big(\mathbf{m}_{i_k}^{\mc C}, p_{i_k + 1}, \ldots, p_n\big)    & \mapsto   & \mathbf{m}_n^\C
            \end{array} \right. \label{coord_map_uniform}
        \end{align}
        is a $C^\infty$-diffeomorphism with Jacobian
        \begin{align*}
            \det \varphi_n^{\mc C} = \prod \limits_{j = 1}^n (p_j(1 - p_j))^{n - \max(j, i_k)}\,.
        \end{align*}
    \end{lemma}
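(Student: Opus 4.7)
The plan is to factor $\varphi_n^{\mc C}$ through the full unconstrained moment vector $\m_{i_k}$. First I would observe that on the domain $\inn \tilde{\mathcal{M}}_{i_k}^{\mc C}$ the constraint $\C$ determines the missing entries $m_{i_1},\dots,m_{i_k}$ as the constants $c_{i_1},\dots,c_{i_k}$, so inserting these yields a smooth bijection
\begin{align*}
\iota: \inn \tilde{\mathcal{M}}_{i_k}^{\mc C} \to \relint \mathcal{M}_{i_k}^{\mc C}([0,1]),\qquad \m_{i_k}^{\mc C}\mapsto \m_{i_k}.
\end{align*}
Composing $\iota$ with the identity on $(0,1)^{n-i_k}$ and then applying the usual canonical-moment parametrization $\varphi_n^{[0,1]}$ restricted to canonical coordinates $p_{i_k+1},\dots,p_n\in(0,1)$ (with $p_1,\dots,p_{i_k}$ implicitly determined by $\m_{i_k}$ via \eqref{def_p_j}) produces $\m_n$; finally, projecting onto the $n-k$ unconstrained coordinates yields $\m_n^{\mc C}$. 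Each step is smooth, and invertibility is immediate: given $\m_n^{\mc C}\in\inn\tilde{\mathcal{M}}_n^{\mc C}([0,1])$, one recovers $\m_{i_k}^{\mc C}$ by projection and the remaining $p_{i_k+1},\dots,p_n$ by the standard formula \eqref{def_p_j} applied to the reconstructed full moment vector.

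Next I would compute the Jacobian by reading off the block structure. Ordering the input as $(\m_{i_k}^{\mc C},p_{i_k+1},\dots,p_n)$ and the output as $(\m_{i_k}^{\mc C},m_{i_k+1},\dots,m_n)$, the Jacobian matrix of $\varphi_n^{\mc C}$ is block lower triangular of the form
\begin{align*}
\begin{pmatrix} I_{i_k-k} & 0 \\ * & B \end{pmatrix},
\end{align*}
where $B$ itself is lower triangular with diagonal entries $\partial m_j/\partial p_j = m_j^+-m_j^-$ for $j=i_k+1,\dots,n$ (exactly as in the proof of Lemma~\ref{lemma_Jacobian}, since $m_j^\pm$ depends only on $m_1,\dots,m_{j-1}$). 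Hence
\begin{align*}
|\det \varphi_n^{\mc C}| = \prod_{j=i_k+1}^n (m_j^+-m_j^-).
\end{align*}

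The final step is to insert formula \eqref{moment_range}, which gives $m_j^+-m_j^- = \prod_{i=1}^{j-1} p_i(1-p_i)$, and rearrange. The exponent of $p_i(1-p_i)$ in the product $\prod_{j=i_k+1}^n\prod_{i=1}^{j-1}p_i(1-p_i)$ equals $\#\{j: i_k+1\le j\le n,\ j>i\}$, which is $n-i_k$ for $i\le i_k$ and $n-i$ for $i_k<i\le n-1$; both cases are captured uniformly by $n-\max(i,i_k)$, yielding the claimed formula. There is no single hard step here; the only thing to watch out for is the bookkeeping when rearranging the double product, and the mild subtlety that $p_1,\dots,p_{i_k}$ appearing in the Jacobian expression are functions of the input $\m_{i_k}^{\mc C}$ (via $\iota$) rather than input coordinates themselves, but this does not affect the determinant computation since those $p_i$'s enter only through the common scaling factors $m_j^+-m_j^-$.
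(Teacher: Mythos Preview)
Your proof is correct and follows essentially the same approach as the paper: both arguments observe that the Jacobian matrix is (block) lower triangular with the first $i_k-k$ diagonal entries equal to $1$ and the remaining ones equal to $\partial m_j/\partial p_j = m_j^+ - m_j^-$, then invoke \eqref{moment_range} and rearrange the double product. The paper's version is simply a more compressed statement of exactly what you wrote out.
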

    \begin{proof}
        The Jacobian matrix of $\varphi_n^{\mc C}$ is lower triangular with determinant
        \begin{align*}
            \det \varphi_n^{\mc C} = \prod\limits_{\substack{l = 1\\ l\not=i_1,\dots,i_k}}^{i_k} \frac{\partial m_l}{\partial m_l} \prod \limits_{l = i_k + 1}^n \frac{\partial m_l}{\partial p_l} = \prod \limits_{l = i_k + 1}^{n} \prod \limits_{j = 1}^{l - 1} p_j(1 - p_j) = \prod \limits_{j = 1}^n (p_j(1 - p_j))^{n - \max(j, i_k)} \,,
        \end{align*}
				where we used the previously applied formula \eqref{moment_range}  again.
    \end{proof}

    In order to identify the limiting measures we need a specialization of Proposition~\ref{measure_representation}.
    \begin{cor} \label{measure_representation_uniform}
        Let $p_1, \ldots, p_r \in (0, 1)$. The probability measure $\mu$ on $[0,1]$ corresponding to the infinite sequence of canonical moments $p_1, \ldots, p_r, \frac{1}{2}, \frac{1}{2}, \ldots$ is absolutely continuous with density
        \begin{align*}
            \frac{\prod \limits_{i = 1}^r p_i q_i}{\pi R_r(x)\sqrt{x - x^2} },
        \end{align*}
        where $R_r(x)$ is a polynomial of degree at most $r$ that is strictly positive on the interval $[0, 1]$. The coefficient of $x^r$ in $R_r$ is $1 - 2p_r$ and all coefficients depend continuously on $p_1, \ldots, p_r$.
        Furthermore, $R_r$ can be expressed as
        \begin{align*}
            R_r(x) = \bigg(\frac{\big(P_j(x) - \frac{1}{2}(x - \frac{1}{2}) P_{j - 1}(x))^2}{x(1-x)} + \frac{1}{4} P_{j - 1}^2(x)\bigg)\cdot
            \begin{cases}
                16 &, \text{ if $r$ is even} \\
                4 &, \text{ if $r$ is odd} \\
            \end{cases},
        \end{align*}
        where $P_1, P_2, \ldots$ are the monic orthogonal polynomials corresponding to $\mu$ and $j:= \Big\lfloor\frac{r + 4}{2}\Big\rfloor$.
    \end{cor}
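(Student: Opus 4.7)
The strategy is to apply Proposition~\ref{measure_representation} to the measure $\mu$, whose recurrence coefficients become eventually constant. Plugging $p_l = 1/2$ (and hence $q_l = 1/2$) for $l > r$ into~\eqref{canonical_recursion} one verifies that $\alpha_l = 1/2$ and $\beta_{l-1} = 1/16$ hold for all $l > j$, where $j := \lfloor(r+4)/2\rfloor$ turns out to be the smallest such index (the parity of $r$ determining whether the last odd or the last even canonical moment is the binding one). With $\alpha = 1/2$ and $\beta = 1/16$, we have $4\beta - (x-\alpha)^2 = x(1-x)$, so $I_{\alpha,\beta} = [0,1]$ and Proposition~\ref{measure_representation} yields the absolutely continuous density
\[
\frac{\sqrt{x(1-x)}\,\beta_1\cdots\beta_{j-1}}{2\pi D(x)}
\]
on $(0,1)$, with $D(x) = P_j^2(x) - P_{j+1}(x)P_{j-1}(x)$.

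Next I factor $x(1-x)$ from $D$: the argument used in the proof of Theorem~\ref{general_lln} for the case $E=[0,1]$ already establishes $D(0) = D(1) = 0$, so $D(x) = x(1-x)\tilde R(x)$ for a polynomial $\tilde R$ of degree at most $r$. Setting $R_r := c_r \tilde R$ with $c_r = 16$ for even $r$ and $c_r = 4$ for odd $r$, iterating~\eqref{canonical_recursion} and collapsing the trailing $1/2$-factors gives $\beta_1 \cdots \beta_{j-1} = (1/8)\prod_{i=1}^r p_i q_i$ if $r$ is even and $(1/2)\prod_{i=1}^r p_i q_i$ if $r$ is odd. Thus the density reduces to $\prod_{i=1}^r p_i q_i / (\pi R_r(x)\sqrt{x-x^2})$, and the explicit formula for $R_r$ stated in the corollary is read off from representation~\eqref{denominator} after multiplication by $c_r$. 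Absolute continuity (no discrete part) is obtained by observing that $\mu$, admitting a canonical moment sequence on $[0,1]$, is supported in $[0,1]$, while Proposition~\ref{measure_representation} locates the discrete part of $\mu$ only at real roots of $D$ outside $I_{\alpha,\beta} = [0,1]$.

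The main obstacle is the parity case in the computation of the leading coefficient $1-2p_r$ via~\eqref{expansion_d}. For $r$ odd, $j=(r+3)/2$ and $\alpha_j = q_r/2 + 1/4 \neq 1/2$, so $(\alpha - \alpha_j)x^{2j-1} = \tfrac{1}{4}(2p_r - 1)x^{r+2}$ is the top term of $D$; dividing out the $-x^{r+2}$ contribution from $x(1-x)$ and multiplying by $c_r = 4$ yields leading coefficient $1-2p_r$ in $R_r$. For $r$ even, $j=(r+4)/2$ forces $\alpha_j = 1/2 = \alpha$, so the $x^{2j-1}$-coefficient of $D$ vanishes and the next term $(\beta - \beta_{j-1})x^{2j-2}$ in~\eqref{expansion_d} takes over; evaluating $\beta_{j-1} = q_r/8$ again yields leading coefficient $1-2p_r$ after multiplication by $c_r=16$. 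Strict positivity of $R_r$ on $[0,1]$ follows from the explicit representation as a sum of two non-negative terms (the first a square divided by $x(1-x)$, the second $\tfrac14 P_{j-1}^2$): vanishing in $(0,1)$ would force $P_{j-1}$ and $P_j$ to share a zero, impossible for consecutive orthogonal polynomials, and positivity at $x \in \{0,1\}$ follows from the simplicity of the boundary zeros of $D$ guaranteed by Proposition~\ref{measure_representation}. Continuous dependence of the coefficients of $R_r$ on $p_1,\ldots,p_r$ is immediate since the orthogonal polynomials $P_s$ are polynomial in the moments of $\mu$, themselves polynomial functions of the canonical moments.
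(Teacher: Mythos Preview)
Your proof is correct and follows essentially the same approach as the paper: apply Proposition~\ref{measure_representation} with $\alpha=\tfrac12$, $\beta=\tfrac1{16}$, use the argument from the proof of Theorem~\ref{general_lln} to factor $x(1-x)$ out of $D$, compute $\beta_1\cdots\beta_{j-1}$ by parity, and extract the leading coefficient of $R_r$ from expansion~\eqref{expansion_d} in the two parity cases. You even supply a few details the paper leaves implicit, namely the explicit argument for absolute continuity (no atoms outside $[0,1]$ since $\mu$ is supported there, none at $0,1$ by Proposition~\ref{measure_representation}), the verification of strict positivity of $R_r$ at the endpoints via simplicity of the boundary zeros of $D$, and the continuous dependence of the coefficients on $p_1,\ldots,p_r$.
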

    \begin{proof}
        By \eqref{canonical_recursion} we have $\alpha_l = \alpha = \frac{1}{2}$ for all $l > \frac{r + 3}{2}$ and $\beta_{l - 1} = \beta = \frac{1}{16}$ for all $l > \frac{r + 4}{2}$. By Proposition~\ref{measure_representation} the measure $\mu$ therefore admits a density on the interval $[0, 1]$ that is given by
        \begin{align*}
            \frac{\sqrt{x(1 - x)} \beta_1 \cdots \beta_{j - 1}}{2 \pi D(x)},
        \end{align*}
        where $j: = \Big\lfloor\frac{r + 4}{2}\Big\rfloor$ and
        \begin{align*}
            D(x) = \Big(P_j(x) - \frac{1}{2}(x - \frac{1}{2}) P_{j - 1}(x)\Big)^2 + \frac{1}{4}x(1 - x)P_{j - 1}^2(x) \,.
        \end{align*}
        Considering separately the cases where $r$ is odd and even, respectively, we obtain from \eqref{canonical_recursion}
        \begin{align*}
            \beta_1 \cdots \beta_{j - 1} = p_{2j - 2} \prod \limits_{i = 1}^{2j - 3} p_i q_i = \prod \limits_{i = 1}^r p_i q_i \cdot
            \begin{cases}
                \frac{1}{8} &, r \text{ even} \\
                \frac{1}{2} &, r \text{ odd} \\
            \end{cases}\,.
        \end{align*}
        With the same arguments as in the proof of Theorem~\ref{general_lln} we find that $D$ has zeros in $0$ and $1$ and the assertion follows from setting
        \begin{align*}
            R_r(x) := \frac{D(x)}{x(1 - x)} \cdot 
            \begin{cases}
                16 &, r \text{ even} \\
                4 &, r \text{ odd}
            \end{cases}.
        \end{align*}
        It remains to see that the degree of $R_r$ is at most $r$ and that the coefficient of $x^r$ is $1 - 2p_r$.        
        If $r$ is odd, then by \eqref{expansion_d} the degree of $R_r(x)$ is at most $2j - 3 = r$ and the coefficient of $x^{r}$ in $R_r(x)$ is given by 
        \begin{align*}
            4\lb\alpha_j - \frac12\rb = 4\left(q_{r} p_{r + 1} + q_{r + 1} p_{r + 2} - \frac{1}{2}\right) = 1 - 2p_r.
        \end{align*}
        If $r$ is even, then by \eqref{expansion_d} the degree of $R_r(x)$ is at most $2j - 3 = r + 1$ and the coefficient of $x^{r + 1}$ in $R_r(x)$ is given by
        \begin{align*}
            16\lb\alpha_j - \frac12\rb= 16\left(q_{r + 1} p_{r + 2} + q_{r + 2} p_{r + 3} - \frac{1}{2}\right) = 0 \,.
        \end{align*}
        Consequently, the degree of $D(x)$ is at most $r$ and the coefficient of $x^{r}$ is given by
        \begin{align*}
            16\lb\beta_{j - 1} - \frac1{16}\rb = 16\left(q_r p_{r + 1} q_{r + 1} p_{r + 2} - \frac{1}{16}\right) = 1 - 2p_r \,.
        \end{align*}
    \end{proof}
    
    A key observation is contained in the following
    \begin{lemma}\label{concave}
        The map
        \begin{align}
            W_n:\left\{
            \begin{array}{ccc}
                \inn \mathcal{M}_n([0, 1])  & \to       & \mathbb{R} \\
                (m_1, \ldots, m_n)  & \mapsto   & \log(4^n(m_{n + 1}^+ - m_{n + 1}^-))
            \end{array}
            \right.\label{moment_range_map}
        \end{align}
        is strictly concave and the Hessian matrix is negative definite in every point.
    \end{lemma}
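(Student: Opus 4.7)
The plan is to pass to canonical coordinates, where the analogue of $W_n$ is manifestly strictly concave, and then transfer this property back to the ordinary moment coordinates.

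By formula \eqref{moment_range}, $m_{n+1}^+ - m_{n+1}^- = \prod_{j=1}^n p_j(1-p_j)$, so
\begin{align*}
W_n(\m_n) = \sum_{j=1}^n f(p_j(\m_n)), \qquad f(p) := \log(4p(1-p)),
\end{align*}
with $f$ strictly concave on $(0,1)$ satisfying $f''(p) = -p^{-2}-(1-p)^{-2}<0$. Expressed in canonical coordinates, $\tilde W_n(\mb p) := \sum_j f(p_j)$ is separable and has the diagonal negative-definite Hessian $\operatorname{diag}(f''(p_j))$, so strict concavity is obvious in $\mb p$-coordinates.

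To transfer this to $\m$-coordinates, I would use the logarithmic-Hessian identity
\begin{align*}
\Hess(W_n) \;=\; \frac{\Hess(g_n)}{g_n} - \frac{\nabla g_n\,(\nabla g_n)^t}{g_n^2}, \qquad g_n:=e^{W_n}=4^n(m_{n+1}^+-m_{n+1}^-).
\end{align*}
The second summand is always negative semidefinite, so it would suffice to show that $g_n$ itself has negative-definite Hessian on $\inn\mc M_n([0,1])$. That $g_n$ is at least concave is immediate, since $m_{n+1}^+$ (resp.\ $m_{n+1}^-$) is the supremum (resp.\ infimum) of the linear functional $\mu\mapsto m_{n+1}(\mu)$ over the convex set of measures with prescribed first $n$ moments, and is therefore concave (resp.\ convex) in $\m_n$.

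The main obstacle is upgrading concavity of $g_n$ to pointwise strict negative-definiteness of its Hessian. I would attempt this by induction on $n$ using the recursive identity
\begin{align*}
W_n + W_{n-1} \;=\; (2n-1)\log 4 + \log\!\bigl((m_n-m_n^-)(m_n^+-m_n)\bigr),
\end{align*}
which follows from $p_nq_n = (m_n-m_n^-)(m_n^+-m_n)/(m_n^+-m_n^-)^2$ together with $\log(m_n^+-m_n^-)=W_{n-1}-(n-1)\log 4$. The last term is strictly concave in $m_n$ with second derivative $-(m_n-m_n^-)^{-2}-(m_n^+-m_n)^{-2}<0$, and together with the inductive hypothesis and the linear independence of $\nabla p_1,\dots,\nabla p_n$ (a consequence of the lower-triangular structure of the Jacobian $\partial\mb p/\partial\m$ with positive diagonal entries $(m_j^+-m_j^-)^{-1}$, cf.\ Lemma~\ref{coordinates_uniform} and \eqref{moment_range}) this should force negative-definiteness of $\Hess(W_n)$ in every direction. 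The base case $n=1$ is direct, as $W_1''(m_1) = -m_1^{-2}-(1-m_1)^{-2}<0$.
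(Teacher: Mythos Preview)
Your proposal has a genuine gap in the inductive step. The identity
\[
W_n + W_{n-1} = (2n-1)\log 4 + \log\!\bigl((m_n-m_n^-)(m_n^+-m_n)\bigr)
\]
is correct, but rearranged it reads $W_n = \text{RHS} - W_{n-1}$. If by induction $W_{n-1}$ is strictly concave, then $-W_{n-1}$ is strictly \emph{convex}, so the inductive hypothesis is working against you, not for you. To close the argument you would have to show that the concavity of the right-hand side strictly dominates the convexity of $-W_{n-1}$ in every direction, which is precisely the hard part and is nowhere addressed. The linear independence of $\nabla p_1,\dots,\nabla p_n$ does not help here: under a nonlinear change of variables the Hessian transforms as $J^t(\Hess_{\mb p}\tilde W_n)J + \sum_j f'(p_j)\,\Hess_{\m}(p_j)$, and the second summand has no definite sign because both $f'(p_j)$ and $\Hess_{\m}(p_j)$ can have either sign. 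This is the same obstruction that prevents the direct transfer of concavity from $\mb p$- to $\m$-coordinates.

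The paper's proof avoids this obstacle by a completely different route: it identifies $W_n(\m)=-\mathcal K(\mu^0\mid\mu)$ for the measure $\mu$ with first $n$ moments $\m$ and $p_j=\tfrac12$ for $j>n$ (via the sum rule), and then exploits the fact that the map $\m\mapsto\mu$ is affine enough that $W_n(\lambda\m+(1-\lambda)\m')\ge -\mathcal K(\mu^0\mid \lambda\mu+(1-\lambda)\mu')$, with the right-hand side concave in $(\mu,\mu')$ because it is an integral of $\log$ of a convex combination of densities. Strong concavity of $\log$ on bounded intervals, together with the explicit Bernstein--Szeg\H{o} density bounds from Corollary~\ref{measure_representation_uniform}, then yields local strong concavity and hence a negative-definite Hessian. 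The crux is working in the space of measures (where the relevant map is affine) rather than in canonical coordinates (where it is not).
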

    \begin{proof}
        Recall that a function $f: \Omega \to \mathbb{R}$ on a convex set $\Omega$ is called \emph{strongly concave with modulus $c > 0$} if
        \begin{align*}
            f(\lambda x + (1 - \lambda) y) \ge \lambda f(x) + (1 - \lambda) f(y) + \frac{c}{2} \lambda (1 - \lambda)\|x - y\|_2^2
        \end{align*}
        holds for all $x, y \in \Omega$ and $\lambda \in [0, 1]$. If $\Omega \subset \mathbb{R}^n$ and $f$ is twice differentiable, then this is equivalent to $\operatorname{Hess}f(x) \le -c I_n$ for all $x \in \Omega$, cf.~\cite[Theorem~4.3.1]{hirbap2001}). We will show that for each point $m \in \inn \mathcal{M}_n([0, 1])$ there is a neighborhood $U$ in which $W_n$ is strongly concave with a positive modulus. Then by the mentioned equivalence, negative definiteness of $\operatorname{Hess}W_n(m)$ will follow.

        Let $m, m' \in \inn \mathcal{M}_n([0, 1])$ be two moment vectors in the interior of the $n$-th moment space. Let $\mu$ and $\mu'$ denote the probability measures with first $n$ moments given by $m$ and $m'$, respectively and whose canonical moments $p_i$ are $\frac{1}{2}$ for $i > n$. By Corollary~\ref{measure_representation_uniform}, $\mu$ and $\mu'$ admit densities $f$ and $g$ w.r.t.~the arcsine measure $\mu^0$. To proceed we use the remarkable identity
        \begin{align}
            \mathcal{K}(\mu^0 \mid \mu) = -\sum \limits_{i = 1}^\infty \log(4 p_i(1 - p_i)) \label{sumrule}
        \end{align}
         between a measure $\mu$ on $[0, 1]$ and its corresponding sequence of canonical moments.  Identity \eqref{sumrule} is a so-called sum rule, given in \cite[p. 523]{GNR1}. It can be deduced from Szeg\H{o}'s theorem (\cite{GNR1}), for an interesting historical survey we refer to \cite[p.29]{simon2011}.
        
By \eqref{moment_range} and     \eqref{sumrule}, $W_n$ can be written in terms of the Kullback-Leibler divergence as
        \begin{align*}
            W_n(m) &= \sum \limits_{i = 1}^n \log(4p_iq_i) = \sum \limits_{i = 1}^\infty \log(4p_iq_i) = -\mathcal{K}(\mu^0 \mid \mu), \\
            W_n(m') &= \sum \limits_{i = 1}^n \log(4p_iq_i) = \sum \limits_{i = 1}^\infty \log(4p_iq_i) = -\mathcal{K}(\mu^0 \mid \mu').
        \end{align*}
        Let $\lambda \in [0, 1]$ be arbitrary and denote by $\nu$ the measure whose first $n$ moments are given by $\lambda m + (1 - \lambda)m'$ and which has canonical moments $p_i = \frac{1}{2}$ for $i > n$. Since $\nu$ and $\lambda \mu + (1 - \lambda) \mu'$ agree on the first $n$ moments, they also have the same first $n$ canonical moments. This yields
        \begin{align*}
            W_n(\lambda m + (1 - \lambda) m') = -\mathcal{K}(\mu^0 | \nu) = \sum \limits_{i = 1}^\infty \log(4p_i(1 - p_i)) \ge -\mathcal{K}(\mu^0 | \lambda \mu + (1 - \lambda) \mu').
        \end{align*}
				To see the last inequality, we observe that the canonical moment $p_i$ of the measure $\lambda \mu + (1 - \lambda) \mu'$ is for $i>n$ in general not $\frac12$ and $\log (4p_i(1-p_i))\leq0$ with equality only for $p_i=\frac12$.
        
        Now note that for any constant $c > 0$ the function $x \mapsto \log(x)$ is strongly concave on the interval $(0, c]$ with modulus $\frac{1}{c^2}$, i.e. for all $x, y \in (0, c]$ we have
        \begin{align*}
            \log(\lambda x + (1 - \lambda) y) \ge \lambda \log(x) + (1 - \lambda) \log(y) + \frac{\lambda (1 - \lambda)}{2c^2} (x - y)^2.
        \end{align*}
        This yields
        \begin{align*}
            &W_n(\lambda m + (1 - \lambda)m') \ge -\mathcal{K}(\mu^0 | \lambda \mu + (1 - \lambda)\mu') = \int_0^1 \frac{\log(\lambda f(x) + (1 - \lambda) g(x))}{\pi\sqrt{x-x^2}} \, dx \\
            \ge{}& \lambda W_n(m) + (1 - \lambda) W_n(m') + \lambda(1 - \lambda) \cdot \frac{1}{2} \int_0^1 \frac{(f(x) - g(x))^2}{\max\{ f^2(x), g^2(x)\}\pi \sqrt{x-x^2}} \, dx\,.
        \end{align*}
        
        By Hölder's inequality we have
        \begin{align*}
            &\left(\int_0^1 \frac{x^m(f(x) - g(x))}{\pi\sqrt{x-x^2}} \, dx\right)^2 \le \left(\int_0^1 \frac{|f(x) - g(x)|}{\max\{f(x), g(x)\} \sqrt{\pi}\sqrt[4]{x-x^2}}  \cdot \frac{\max\{f(x), g(x)\}}{\sqrt{\pi}\sqrt[4]{x-x^2}}dx\right)^2 \\
            \le{}& \int_0^1 \frac{(f(x) - g(x))^2}{\max\{f^2(x), g^2(x)\} \pi \sqrt{x-x^2}} \, dx \int_0^1 \frac{\max\{f^2(x), g^2(x)\}}{\pi \sqrt{x-x^2}} \, dx.
        \end{align*}
        
        If $f_\lambda$ denotes the density of $\mu$ with respect to the Lebesgue-measure, then
        \begin{align*}
            f_\lambda(x) = \frac{d\mu}{d\lambda}(x) = \frac{d\mu}{d\mu^0}(x) \cdot \frac{d\mu^0}{d\lambda}(x) = \frac{f(x)}{\pi \sqrt{x-x^2}}.
        \end{align*}
        This results in
        \begin{align*}
            \int_0^1 \frac{f^2(x)}{\pi \sqrt{x-x^2}} \, dx = \int_0^1 (\pi \sqrt{x-x^2} f_\lambda(x)) \cdot f_\lambda(x) \, dx \le \int_0^1 \frac{1}{R_n(x)} f_\lambda(x) \, dx,
        \end{align*}
				where $R_n$ is the polynomial from Corollary \ref{measure_representation_uniform}.
        By the same corollary, we know that $R_n(x)$ is a strictly positive polynomial with coefficients depending on $p_1, \ldots, p_n$ in a continuous way and therefore locally in $p_1, \ldots, p_n$ we can bound $R_n(x)$ from below by some constant $K_n$ depending on $n$ and $p_1, \ldots, p_n$. Since $f_\lambda$ is a density, we have
        \begin{align*}
            \int_0^1 \frac{1}{R_n(x)} f_\lambda(x) \, dx \le \frac{1}{K_n}.
        \end{align*}
        Combining these inequalities we get
        \begin{align*}
            W_n(\lambda m + (1 - \lambda)m') \ge \lambda W_n(m) + (1 - \lambda) W_n(m') + \lambda(1 - \lambda) \cdot \frac{1}{2K_nn} \|m - m'\|_2^2,
        \end{align*}
        i.e. $W_n$ is (locally) strongly concave.
    \end{proof}
    
    Before giving the proofs of the main results of Section \ref{sec_uniform}, a few words on the logic structure of the proofs are in order. We start with the LDP which will be used in the proof of the law of large numbers (Theorem \ref{uniform_lln}) lateron. The limiting measure $\mu^\C$ was introduced in Section \ref{sec_uniform} in Theorem \ref{uniform_lln} but as we already need it in the proof of the LDP, we will define it as the unique minimizer of the Kullback-Leibler divergence $\mc K(\mu^0|\cdot)$ over $\mc P^\C([0,1])$. In particular, in the following proof we show that this minimization problem indeed has a unique solution. The convergence of the random moments to the moments of the minimizing measure and the representation of the minimizer will then be given in the proof of Theorem \ref{uniform_lln}.
    \begin{proof}[Proof of Theorem~\ref{uniform_ldp}]
        Applying the parametrization of Lemma~\ref{coordinates_uniform} to $(m_1^{(n)}, \ldots, m_n^{(n)})$, we obtain a random vector $(\mathbf{m}_{i_k}^{\mc C,(n)}, p_{i_k + 1}^{(n)}, \ldots, p_n^{(n)})$, where $p_j^{(n)}$ is beta$(n-j+1,n-j+1)$-distributed, the density of $\mathbf{m}_{i_k}^{\mc C,(n)}$ is proportional to
        \begin{align}
            \exp(-(n - i_k) \sum \limits_{i = 1}^{i_k} \log(p_i(1 - p_i))) = \exp(-(n - i_k) \log(m_{i_k + 1}^+ - m_{i_k + 1}^-)),\label{density_first}
        \end{align}
        and we have used formula~\eqref{moment_range} for the last identity. Applying Theorem~\ref{ldp} and Lemma~\ref{exact_ldp}, we obtain an LDP for the coordiates $(\mathbf{m}_{i_k}^{\mc C,(n)}, p_{i_k + 1}^{(n)}, \ldots, p_n^{(n)})$ with good rate function
        \begin{align}\label{equation_K}
            I_1(\mathbf{m}_{i_k}^{\mc C}, p_{i_k + 1}, \ldots, p_l) := -\log(m_{i_k + 1}^+ - m_{i_k + 1}^-) - \sum \limits_{j = i_k + 1}^l \log(p_j(1 - p_j)) + K,
        \end{align}
        where $K$ is a constant that guarantees the infimum of $I_1$ being zero. By the contraction principle and equation~\eqref{moment_range} the vector $(m_1^{(n)}, \ldots, m_l^{(n)})$ therefore satisfies an LDP with good rate function
        \begin{align}
            I_2(m) := I(m) + K\,,\label{def_I_2}
        \end{align}
        where $I$ is defined in \eqref{idef}. From \eqref{equation_K} we see that 
				\begin{align}
				K=\log(m_{i_k + 1}^+(\mu^*) - m_{i_k + 1}^-(\mu^*))-(l-i_k)\log4,\label{equation_K_2}
				\end{align}
				where $\mu^*$ is a measure with first $i_k$ canonical moments $p_1(\mu^*),\dots,p_{i_k}(\mu^*)$ minimizing 
				\begin{align*}
				(p_1,\dots,p_{i_k})\mapsto -\log(m_{i_k + 1}^+(\mu^*) - m_{i_k + 1}^-(\mu^*))
				\end{align*}
				and higher canonical moments $p_j=\frac12$ for all $j>i_k$. It remains to show that there is a unique such $\mu^*$ and that $\mu^*=\mu^\C$, where 
				\begin{align}
				\mu^\C:=\argmin_{\mu\in\mc P^\C([0,1])}\mc K(\mu^0|\mu).\label{def_mu}
				\end{align}
				For the uniqueness, note that since $\m_{i_k}\mapsto-\log(m_{i_k+1}^+-m_{i_k+1}^-)$ is $+\infty$ on the boundary of $\mc M_{i_k}^\C([0,1])$ and strictly convex on the interior by Lemma \ref{moment_range_map}, it has a unique minimizer which is attained on $\inn\mc M_{i_k}^\C([0,1])$. To see $\mu^*=\mu^\C$, recall that whether a measure $\mu\in\mc P([0,1])$ fulfills constraint $\C$ or not, is determined by the first $i_k$ canonical moments $p_1,\dots,p_{i_k}$ only. Thus by \eqref{sumrule}, a measure $\mu\in\mc P^\C([0,1])$ minimizing $\mc K(\mu^0|\cdot)$ has to have canonical moments $p_j=\frac12$ for all $j>i_k$ and hence
					\begin{align}
					\mc K(\mu^0|\mu)=-\sum_{j=1}^{i_k}\log(4p_j(1-p_j))=-\log(m_{i_k+1}^+(\mu)-m_{i_k+1}^-(\mu))-i_k\log4.\label{sumrule2}
					\end{align}
					This implies $\mu^*=\mu^\C$ and finishes the proof. 
    \end{proof}
    \begin{proof}[Proof of Theorem~\ref{uniform_ldp_functional}]
        By \eqref{def_I_2}, \eqref{equation_K_2}, \eqref{sumrule} and \eqref{sumrule2}, for any $l$ the sequence $(m_1(\mu^{(n)}), \ldots, m_l(\mu^{(n)}))$ satisfies an LDP with good rate function
        \begin{align*}
            I(\m_l) = \mathcal{K}(\mu^0 \mid \mu(\m_l)) - \mathcal{K}(\mu^0 \mid \mu^{\mc C})\,,
        \end{align*}
        where $\m_l$ is an $l$-dimensional vector of moments and $\mu(\m_l)$ is the measure that has $\m_l$ as its first $l$ moments and canonical moments $p_j = \frac{1}{2}$ for all $j > l$. By the Dawson-Gärtner theorem \cite[Theorem 4.6.1]{demzeit1998} we then obtain an LDP for the infinite sequence of moments $(m_1(\mu^{(n)}), m_2(\mu^{(n)}), \ldots)$ with good rate function
        \begin{align*}
            I(\m_\infty) = \sup \limits_{l \ge 0} \mathcal{K}(\mu^0 \mid \mu(\m_l)) - \mathcal{K}(\mu^0 \mid \mu^{\mc C})\,,
        \end{align*}
        where $\m_\infty$ is the infinite vector of moments containing the $\m_l$'s. By \eqref{sumrule} the term
        \begin{align*}
            \mathcal{K}(\mu^0 \mid \mu(\m_l)) = -\sum \limits_{j = 1}^l \log(4p_iq_i)
        \end{align*}
        is increasing in $l$ and we therefore obtain
        \begin{align*}
            I(\m_\infty) = \sup \limits_{l \ge 0} \mathcal{K}(\mu^0 \mid \mu(\m_l)) - \mathcal{K}(\mu^0 \mid \mu^{\mathcal{C}}) = \mathcal{K}(\mu^0 \mid \mu(\m_\infty)) - \mathcal{K}(\mu^0 \mid \mu^{\mathcal{C}}),
        \end{align*}
        where $\mu(\m_\infty)$ is the measure with moments $\m_\infty$. The theorem now follows from applying the contraction principle to transfer the LDP from the sequence $(m_1(\mu^{(n)}), m_2(\mu^{(n)}), \ldots)$ to the measure $\mu^{(n)}$. This is possible since $\mu^{(n)}$ is compactly supported and hence uniquely determined by its moments.
    \end{proof}
    \begin{proof}[Proof of Theorem~\ref{uniform_lln}]
		Recall from the proof of Theorem~\ref{uniform_ldp} that $\m_l^{(n)}$ satisfies an LDP with good rate function $I-I(\m_l(\mu^\C))$, where $\mu^\C$ has been defined as the unique minimizer of $\mc K(\mu^0|\cdot)$ over $\mc P^\C([0,1])$. The convergence stated in Theorem~\ref{uniform_lln} now follows from an application of the Borel-Cantelli lemma.
	The claimed representation of $\mu^{\mc C}$ is then provided by Corollary~\ref{measure_representation_uniform}.
    \end{proof}
    Next, we give the computation of the measures in Example~\ref{example_uniform}.
    \begin{example}[Continuation of Example~\ref{example_uniform}]
        We will consider the different constraints $\mathcal{C}_1$, $\mathcal{C}_2$ and $\mathcal{C}_3$ separately. \medskip
        
        \textbf{Case $\mathcal{C}_1 = \{m_1 = c_1\}$:} As seen in the proof of Theorem~\ref{uniform_lln}, the measure $\mu^{\mathcal{C}_1}$ is uniquely determined by having canonical moments $p_1 = c_1$ and $p_j = \frac{1}{2}$ for all $j > 1$. We may thus apply Corollary~\ref{measure_representation_uniform} to obtain the representation of $\mu^{\mathcal{C}_1}$. By formula~\eqref{canonical_recursion} the recursion parameters of $\mu^{\mathcal{C}_1}$ are given by $\alpha_1 = c_1$, $\alpha_2 = \frac{3 - 2c_1}{4}$, $\beta_1 = \frac{c_1(1 - c_1)}{2}$ and $\alpha_j = \frac{1}{2}$, $\beta_{j - 1} = \frac{1}{16}$ for all $j > 2$. The orthogonal polynomials of $\mu^{\mathcal{C}_1}$ up to order 2 can hence be calculated as
        \begin{align*}
            P_0(x) &= 1 \\
            P_1(x) &= x - c_1 \\
            P_2(x) &= (x - \alpha_2) P_1(x) - \beta_1 P_0(x) = x^2 - \frac{3 + 2c_1}{4} x + \frac{c_1}{4}
        \end{align*}
        and we obtain
        \begin{align*}
            R_1(x) = 4 \left(\frac{P_2(x) - \frac{1}{2}(x - \frac{1}{2}) P_1(x))^2}{x(1 - x)} + \frac{1}{4} P_1^2(x)\right) = x(1 - x) + (x - c_1)^2\,.
        \end{align*}
        The stated form of the measure now follows from Corollary~\ref{measure_representation_uniform}. \medskip
        
        \textbf{Case $\mathcal{C}_2 = \{m_1 = c_1, m_2 = c_2\}$:} As in the first case, the measure $\mu^{\mathcal{C}_2}$ is uniquely determined by having canonical moments $p_1 = c_1$, $p_2 = \frac{m_2 - m_2^-}{m_2^+ - m_2^-} = \frac{c_2 - c_1^2}{c_1(1 - c_1)}$ and $p_j = \frac{1}{2}$ for all $j > 2$. Analogously to the first case we calculate
        \begin{align*}
            R_2(x) = (1 - p_2)^2(x - p_1)^2 + p_2^2 x(1 - x) = \frac{(c_1 - c_2)^2(x - c_1)^2 + (c_2 - c_1^2)^2 x(1 - x)}{c_1^2(1 - c_1)^2}
        \end{align*}
        and the representation of the measure follows again from Corollary~\ref{measure_representation_uniform}. Note here that
        \begin{align*}
            p_1q_1 p_2q_2 =c_1(1 - c_1) \frac{c_2 - c_1^2}{c_1(1 - c_1)} \frac{c_1 - c_2}{c_1(1 - c_1)} = \frac{(c_2 - c_1^2)(c_1 - c_2)}{c_1(1 - c_1)}.
        \end{align*}
        \medskip
        
        \textbf{Case $\mathcal{C}_3 = \{m_2 = c_2\}$:} As seen in the proof of Theorem~\ref{uniform_lln}, the moment $m_1$ of $\mu^{\mathcal{C}_3}$ maximizes the moments range $m_3^+ - m_3^-$ under all measures satisfying $\mathcal{C}_3$. Recalling~\eqref{moment_range}, we have for any measure with $m_2 = c_2$
        \begin{align*}
            m_3^+ - m_3^- = p_1q_1p_2q_2 = \frac{(c_2 - m_1^2)(m_1 - c_2)}{m_1(1 - m_1)}\,,
        \end{align*}
        which (as a function of $m_1$) has a unique maximizer $m_1^*$ by Lemma~\ref{concave}. Therefore the canonical moments of $\mu^{\mathcal{C}_3}$ are given by $p_1 = m_1^*$, $p_2 = \frac{c_2 - (m_1^*)^2}{m_1^*(1 - m_1^*)}$ and $p_j = \frac{1}{2}$ for all $j > 2$. The assertion now follows from the calculations in the case $\mathcal{C}_2 = \{m_1 = m_1^*, m_2 = c_2\}$.
    \end{example}
    \begin{proof}[Proof of Theorem~\ref{uniform_clt}]
       Recall that the coordinates $(\mathbf{m}_{i_k}^{\mc C,(n)}, p_{i_k + 1}^{(n)}, \ldots, p_l^{(n)})$ have distributions $p_j^{(n)} \sim $beta$(n - j + 1, n - j + 1)$ and $\mathbf{m}_{i_k}^{\mc C,(n)}$ has a density proportional to
        \begin{align*}
            \exp(n \log(m_{i_k + 1}^+ - m_{i_k + 1}^-)) (m_{i_k + 1}^+ - m_{i_k + 1}^-)^{-i_k} 1_{\inn \mathcal{M}_{i_k}^{\mc C}([0, 1])}(m) \,.
        \end{align*}
        With Lemma~\ref{concave} the theorem follows by an application of Proposition~\ref{conv_normal} and the delta-method analogously to the proof of Theorem \ref{general_dist_clt}. Note that the minimizer of the map $m \mapsto -\log(m_{i_k + 1}^+ - m_{i_k + 1}^-)$ is unique and given by the moments $(m_1(\mu^{\mc C}), \ldots, m_{i_k}(\mu^{\mc C}))$ by the same arguments as in the proof of Theorem~\ref{uniform_lln}.
    The covariance matrix $\Sigma_l$ is given as
    \begin{gather}
        \Sigma_l:=T^{\mc C}\tilde{\Sigma}_l\,,\label{Sigma_l}\\
        \tilde{\Sigma}_l:= \mathcal{D}^t \operatorname{diag}\Big(\operatorname{Hess}^{\mc C}(W_{i_k})\big(\mathbf{m}_{i_k}(\mu^{\mc C})\big), \frac{1}{8}, \ldots, \frac{1}{8}\Big) \mathcal{D}\,,\notag \\
        \mathcal{D} := D\varphi_l^{\mc C}\Big(\mathbf{m}_{i_k}(\mu^{\mc C}), \frac{1}{2},\ldots, \frac{1}{2}\Big) \,, \notag
    \end{gather}
    where $T^{\mc C}:\R^{(l-{k})\times(l-k)}\to\R^{l\times l}$ denotes the transformation of an $(l-{k})\times(l-{k})$ matrix by inserting rows and columns of zeros at the positions $i_1,\dots,i_k$. The maps $\varphi_l^{\mc C}$ and $W_{i_k}$ are defined in~\eqref{coord_map_uniform} and~\eqref{moment_range_map} and $\operatorname{Hess}^{\mc C}$ denotes the Hessian with respect to the coordinates in $\mathbf{m}_{i_k}^{\mc C}$.
    \end{proof}
    \begin{proof}[Proof of Theorem~\ref{uniform_mdp}]
        The proof follows by an application of Proposition~\ref{mdp} analogous to the proof of Theorem~\ref{uniform_clt}. Note that the delta-method for large deviations must be applied similar to the proof of Theorem~\ref{general_dist_mdp}.
    \end{proof}
    
    \begin{proof}[Proof of Proposition~\ref{volume}]
        By Lemma~\ref{coordinates_uniform} and \eqref{density_first}, the volume of the restricted space is given by
        \begin{align*}
            &\operatorname{vol}_{n-k}\Big(\mathcal{M}_n^{\mc C}([0, 1])\Big) \\
            ={}& \int \limits_{\mathcal{M}_{i_k}^{\mc C}([0, 1])} \exp\Big(-(n - i_k) \log(m_{i_k + 1}^+ - m_{i_k + 1}^-)\Big) \, dm \prod \limits_{j = i_k + 1}^n B(n - j + 1, n - j + 1)\,,
        \end{align*}
        where $B(a, b) := \int_0^1 x^{a - 1} (1 - x)^{b - 1} \, dx$ is the beta function. By an application of Laplace's method and the minimization property of $\mu^C$, the first factor satisfies
        \begin{align*}
            &\int \limits_{\mathcal{M}_{i_k}^{\mc C}([0, 1])} \exp\Big((n - i_k) \log(m_{i_k + 1}^+ - m_{i_k + 1}^-)\Big) \, dm \\
            ={}& n^{-(i_k - k)/2} \Big((m_{i_k + 1}^+ - m_{i_k + 1}^-)(\mu^{\mc C})\Big)^{n - i_k} \left(\frac{(2\pi)^{(i_k - k)/2}}{\sqrt{d^{\mc C}}} + o(1)\right) \,,
        \end{align*}
        where the constant $d^{\mc C}$ is the determinant of the Hessian of the map $m \mapsto -\log(m_{i_k + 1}^+ - m_{i_k + 1}^-)$ with respect to the coordinates $\mathbf{m}_{i_k}^{\mc C}$, evaluated in $(m_1(\mu^{\mc C}), \ldots, m_{i_k}(\mu^{\mc C}))$.
        
        For the second factor, observe that by~\eqref{volume_momspace}
        \begin{align*}
            &\prod \limits_{j = i_k + 1}^n B(n - j + 1, n - j + 1) = \operatorname{vol}_n\big(\mathcal{M}_n([0, 1])\big) \prod \limits_{j = 1}^{i_k} B(n - j + 1, n - j + 1)^{-1}
        \end{align*}
        holds and another application of Laplace's method yields
        \begin{align*}
            B(n - j + 1, n - j + 1) = n^{-1/2}4^{-(n-j)} \Big(\frac{\sqrt{\pi}}{2} + o(1)\Big) \,.
        \end{align*}
        Combining these equations then gives
        \begin{align*}
            \frac{\operatorname{vol}_{n - k}\big(\mathcal{M}_n^{\mc C}([0, 1])\big)}{\operatorname{vol}_n\big(\mathcal{M}_n([0, 1])\big)} = n^{k/2} \Big(4^{i_k}(m_{i_k + 1}^+ - m_{i_k + 1}^-)(\mu^{\mc C})\Big)^{n - i_k}\left(\frac{2^{i_k^2 + (i_k - k)/2}}{\sqrt{d^{\mc C}} \pi^{k/2}} + o(1)\right).
        \end{align*}
    \end{proof}

    {\bf Acknowledgements.} 
    The authors would like to thank Fabrice Gamboa for useful discussions.
    The work of  H. Dette and D. Tomecki was supported by the Deutsche Forschungsgemeinschaft (DFG Research Unit 1735, DE 502/26-2, RTG
    2131:  High-dimensional Phenomena in Probability - Fluctuations and Discontinuity). 
    The work of M. Venker was supported by  the European Research Council under the European Unions Seventh
    Framework Programme (FP/2007/2013)/ ERC Grant Agreement n.  307074.
    \bigskip
    
        % Bibliography
    \bibliographystyle{apalike}
    \bibliography{quellen}
\end{document}